\documentclass[12pt]{amsart}
\usepackage{amsfonts,amssymb,amsmath,amscd,amsthm, amstext,bm,color}
\usepackage[colorlinks, citecolor=blue,pagebackref,hypertexnames=false]{hyperref}
\usepackage{enumerate}
\usepackage{fancyhdr}
\usepackage{geometry}
\usepackage{graphicx}
\usepackage{epstopdf}
\usepackage{mathrsfs}
\usepackage{txfonts}
\usepackage{tikz}
\usetikzlibrary{arrows.meta, positioning, decorations.pathreplacing}
\usepackage{url}

\usepackage{comment}
\allowdisplaybreaks

\newcommand{\R}{\mathbb R}

\newcommand{\dist}{\operatorname{dist}}
\newcommand{\supp}{\operatorname{supp}}
\newcommand{\co}{\operatorname{co}}

\newcommand{\norm}[1]{\left\|#1\right\|}
\newcommand{\abs}[1]{\left|#1\right|}
\newcommand{\ip}[2]{\left\langle #1,#2\right\rangle}
\newcommand{\pv}{\operatorname{p.v.}}

\theoremstyle{plain}
\newtheorem{theorem}{Theorem}[section]
\newtheorem{proposition}[theorem]{Proposition}
\newtheorem{lemma}[theorem]{Lemma}

\theoremstyle{definition}
\newtheorem{definition}[theorem]{Definition}
\newtheorem{remark}[theorem]{Remark}

\numberwithin{equation}{section}

\title[Dunkl--BMO and generalized balayage of Carleson measures]{Dunkl--BMO functions representable in terms of generalized balayage of Carleson measures and an application}

\author{Qingdong Guo}
\address{Qingdong Guo, School of Mathematics and Statistics, Xiamen University of Technology, Xiamen 361024, P.R. China}
\email{qingdongmath@xmut.edu.cn}

\date{}

\begin{document}

\begin{abstract}
We characterize compactly supported functions $f$ in Dunkl--BMO spaces as those that can be written as 
\begin{equation*}
  f=g+\widetilde{S}_{\mu,\mathcal{P}},  
\end{equation*}
the sum of some bounded function $g$ and the generalized balayage $\widetilde{S}_{\mu,\mathcal{P}}$ of a Carleson measure on $\mathbb{R}^{N}\times(0,\infty)$ associated with the Dunkl--Poisson semigroup. As a direct application of this representation of Dunkl--BMO functions, we establish a new characterization of the Hardy space in the Dunkl setting via a principal value integral related to $\widetilde{S}_{\mu,\mathcal{P}}$. Moreover, analogous conclusions hold for the Dunkl--heat semigroup as well.
\end{abstract}

\subjclass[2020]{Primary 42B35; Secondary 42B20, 42B30, 43A85.}
\keywords{Dunkl--BMO space, Balayage, Carleson measure, Dunkl--Poisson semigroup, Hardy space.}

\maketitle

\tableofcontents

\section{Introduction}
The relationship among the space of functions of bounded mean oscillation, Carleson measures, and balayage has been a classical topic in modern harmonic analysis. The origins of this line of research can be traced back to the pioneering work of Carleson \cite[Theorem 2]{CarlesonLennart1976}. That is, for every function $f\in{\mathrm{BMO}(\mathbb{R}^{N})}$ (the bounded mean oscillation space) with compact support, there exist a sequence of bounded functions $\{b_{k}\}_{k=0}^{\infty}$ and a sequence of positive functions $\{t_{k}(\cdot)\}_{k\in{\mathbb{N}}}$ on $\mathbb{R}^{N}$ such that
\begin{equation*}
  f(x)=\sum_{k=1}^{\infty}\int_{\mathbb{R}^{N}}p_{t_{k}(y)}(x-y)b_{k}(y)dy+b_{0}(x),
\end{equation*}
where $p_{t}(x):=t^{-N}p(x/t)$ and $\{p_t\}_{t>0}$ denotes the classical Poisson semigroup, which is a particular example of the uniparametric families studied in \cite{CarlesonLennart1976}. 
Later, Uchiyama in \cite{Uchiyama1980} improved this result as
\begin{equation*}
  f(x)=\int_{\mathbb{R}^{N}}p_{t(y)}(x-y)b_{1}(y)dy+b_{0}(x),
\end{equation*}
and further showed that this representation can be written in the form (see also \cite{GarnettJones1982})
\begin{equation}\label{eq:bmo-decomposition}
  f(x)=\iint_{\mathbb{R}^{N+1}_{+}}p_{t}(x-y)d\mu(y,t)+b_{0}(x)=: S_{\mu,P}(x)+b_{0}(x),
\end{equation}
where $\mu$ is a Carleson measure on $\mathbb{R}^{N+1}_{+}:=\mathbb{R}^{N}\times(0,\infty)$, meaning that there exists $C>0$ such that for every cube $Q\subset\mathbb{R}^{N}$,
\begin{equation*}
  |\mu|(Q\times(0,\ell(Q)))\leq C|Q|,
\end{equation*}
where $\ell(Q)$ denotes the side length of $Q$, and $|\mu|$ denotes the total variation measure of $\mu$. And he further extended this result to Ahlfors-regular spaces under suitable conditions, using an iterative argument similar to that in \cite{CarlesonLennart1976}. 
Note that the constructions of decompositions analogous to \eqref{eq:bmo-decomposition} in the Euclidean setting and in Ahlfors-regular spaces, as presented in \cite{Uchiyama1980}, rely on the following key properties:

$\bullet$ the kernels appearing in these decompositions depend only on a single metric and satisfy certain conditions; 

$\bullet$ the measures of a ball are comparable to its radius, with comparison constants independent of its center.

By means of Green's theorem together with the Calder\'{o}n reproducing formula associated with the classical Poisson semigroup, Wilson \cite{Wilson1988} provided an alternative proof of the decomposition \eqref{eq:bmo-decomposition}. Subsequently, following Wilson's idea, the articles \cite{AlmeidaBetancor2019,Chen2017,GuoLy2025} on the balayage representations of BMO functions have been established in several nonclassical settings, including the Schrödinger setting and two Bessel-type settings. 

There is an observation concerning the balayage $S_{\mu,P}$. The decompositions of every compactly supported function 
$f\in\mathrm{BMO}(\mathbb{R}^{N})$ obtained in
\cite{CarlesonLennart1976,GarnettJones1982,Uchiyama1980}
take a form analogous to \eqref{eq:bmo-decomposition}, where $\mu$ is a Carleson measure constructed from $f$ and
\[
|S_{\mu,P}(x)|<\infty
\qquad\text{for\quad a.e.\,\,}x\in\mathbb{R}^{N}.
\]
Moreover, decompositions similar to \eqref{eq:bmo-decomposition}
have been obtained in \cite{AlmeidaBetancor2019,Chen2017,GuoLy2025,Wilson1988} for $\mathrm{BMO}$-type spaces associated with various operators with the resulting Carleson measures all being finite. Nevertheless, taking $\mu=\sum_{k=1}^{\infty}2^{kN}\delta_{(0,2^{k})}$, with $\delta_{(0,2^{k})}$ denoting the Dirac mass at $(0,2^{k})\in{\mathbb{R}^{N+1}_{+}}$, one can verify that $\mu$ is a Carleson measure on $\mathbb{R}^{N+1}_{+}$, yet the function $S_{\mu,P}(x)$ fails to be well-defined on $\mathbb{R}^{N}$. Hence, it is therefore natural to ask the following question:

\vspace{0.1cm}
\noindent
\textbf{Question:} \textit{Could we generalize the decomposition \eqref{eq:bmo-decomposition} by allowing the measure involved to be any Carleson measure?}
\vspace{0.1cm}

In this paper, we give an affirmative answer to this question by introducing a generalized balayage in the Dunkl setting that includes the classical Euclidean balayage as a special case. We then establish a characterization of Dunkl--$\mathrm{BMO}$ functions in terms of generalized balayages of Carleson measures, with an application to a Hardy space in the Dunkl setting.

\vspace{-0.3cm}
\subsection{Generalized balayage and main results}
To begin with, we provide some notation, with additional details provided in Section \ref{sec:dunkl-preliminaries}.
Let $R\subseteq \R^N\setminus\{0\}$ be a reduced finite root system, let $G$ be the finite reflection group generated by $R$, and let $\kappa\ge0$ be a
$G$-invariant multiplicity function on $R$.   The associated Dunkl measure is
$$
        d\omega(x):=\prod_{\alpha\in R}\abs{\ip{\alpha}{x}}^{\kappa(\alpha)}\,dx.
$$
Let $T_j:=T_{e_j}$, $j=1,\ldots,N$ be the Dunkl operators related to the standard basis $e_1,\ldots,e_N$.  The Dunkl Laplacian is $\Delta_D:=\sum_{j=1}^N T_j^2$.
The Dunkl--Poisson semigroup generated by $\sqrt{-\Delta_D}$ is defined by
$$
        \mathcal{P}_t f(x):=e^{-t\sqrt{-\Delta_D}}f(x):=\int_{\R^N}p_t(x,y)f(y)\,d\omega(y),\qquad t>0,\,x\in{\R^N},
$$
where
\begin{equation*}
        p_t(x,y)  :=  \int_{\R^N}   \frac{c_{N,\kappa}t}     {\bigl(t^2+\norm{x}^2+\norm{y}^2-2\ip{y}{\xi}\bigr)^{(\mathbf N+1)/2}}    \,d\mu_x(\xi),
\end{equation*}
and $\mu_x$ is a probability measure supported in $\co(O(x))$, the convex hull of the orbit of $x$ under $G$. The normalizing constant is
$$
        c_{N,\kappa}    :=2^{\mathbf N/2}\pi^{-1/2}\Gamma\bigl((\mathbf N+1)/2\bigr)/c_{\kappa},\quad c_{\kappa}:=\int_{\R^N}e^{-\norm{x}^{2}/2}\,d\omega(x).
$$

Let $\Re(\mathbb{R}^{N+1}_{+})$ be the set of all Carleson-type measures on $\mathbb{R}^{N+1}_{+}$. Consider the balayage of a measure $\mu\in\Re(\mathbb{R}^{N+1}_{+})$, defined as
$$
  S_{\mu,\mathcal{P}}(x):=\iint_{\mathbb{R}^{N+1}_{+}}p_{t}(x,y)d\mu(y,t).
$$
The notion of balayage can be understood as follows: if $\mu$ is a finite measure in $\Re(\mathbb{R}^{N+1}_{+})$, then in view of \eqref{eq:p-G-invariance}, we have $\|S_{\mu,\mathcal{P}}\|_{L^{1}(\mathbb{R}^{N}, d\omega)}\leq|\mu|(\mathbb{R}^{N+1}_{+})$, which implies $|S_{\mu,\mathcal{P}}(x)|<\infty$, for almost every $x\in{\mathbb{R}^{N}}$. However, when $\mu=\sum_{k=1}^{\infty}2^{k\mathbf{N}}\delta_{(0,2^{k})}$, we have $\mu\in{\Re(\mathbb{R}^{N+1}_{+})}$, whereas $S_{\mu,\mathcal{P}}(x)$ is not well-defined on $\mathbb{R}^{N}$. To extend the notion of balayage to all measures in $\Re(\mathbb{R}^{N+1}_{+})$, we first introduce the modified balayage $\widetilde{S}_{\mu,\mathcal{P}}$ by 
$$
  \widetilde{S}_{\mu,\mathcal{P}}(x):=\iint_{\mathbb{R}^{N+1}_{+}}p_{t}(x,y)-p_{t}(0,y)\cdot \mathbf{1}_{\{t>1\}}(y,t)d\mu(y,t),\qquad x\in{\mathbb{R}^{N}},
$$
where and in the sequel, $\mathbf{1}_{\{t>1\}}$ denotes the indicator of $\{(y,t)\in\mathbb{R}^{N+1}_{+}:\,\,t>1\}$.
It follows from \eqref{eq:local-integral} that, for any $\mu\in{\Re(\mathbb{R}^{N+1}_{+})}$, $\left|\widetilde{S}_{\mu,\mathcal{P}}(x)\right|<\infty$, for almost every  $x\in{\mathbb{R}^{N}}$.

Our first main result is as follows. The following notion of the Dunkl--BMO space
$\mathrm{BMO}(\mathbb{R}^{N},d\omega)$ is given in
Subsection~\ref{sec:dunkl-BMO-Hardy-Carleson}.

\begin{theorem}\label{thm:balayage-measure}
We have the following statements:
\begin{enumerate}
\item [$\rm(i)$] If $\mu\in{\Re(\mathbb{R}^{N+1}_{+})}$, then $\widetilde{S}_{\mu,\mathcal{P}}\in{\mathrm{BMO}(\mathbb{R}^{N}, d\omega)}$ and
    \begin{equation*}
      \left\|\widetilde{S}_{\mu,\mathcal{P}}\right\|_{\ast,\kappa}\lesssim\|\mu\|_{\mathcal{C}},
    \end{equation*}
     where the implicit constant is independent of $\mu$.

\item [$\rm(ii)$] Let $f\in{\mathrm{BMO}(\mathbb{R}^{N}, d\omega)}$ with compact support in $\mathbb{R}^{N}$. Then there exist $g\in{L^{\infty}(\mathbb{R}^{N}, d\omega)}$ and a Carleson measure $\mu\in{\Re(\mathbb{R}^{N+1}_{+})}$ such that
    \begin{equation}\label{eq:balayage-decomposition}
      f=g+\widetilde{S}_{\mu,\mathcal{P}},\qquad \mathrm{in}\quad\mathrm{BMO}(\mathbb{R}^{N}, d\omega),
    \end{equation}
    along with the bound
    \begin{equation*}
      \|g\|_{L^{\infty}(\mathbb{R}^{N}, d\omega)}+\|\mu\|_{\mathcal{C}}\lesssim\|f\|_{\ast,\kappa},
    \end{equation*}
where the implicit constant is independent of $f$. Here and in the sequel, the notation ``$\mathrm{in}$ $\mathrm{BMO}(\mathbb{R}^{N}, d\omega)$" means that equality is understood modulo constants.
\end{enumerate}
\end{theorem}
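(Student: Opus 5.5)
For part (i), I would follow the classical duality-free argument: fix a ball $B=B(x_0,r)$ and split $\mu=\mu_1+\mu_2$, where $\mu_1:=\mu|_{\widehat{B^*}}$ is the restriction to the tent $\{(y,t): d(y,x_0)<4r,\ t<4r\}$ (with $d$ the orbit distance $d(x,y)=\min_{\sigma\in G}\|x-\sigma y\|$), and $\mu_2:=\mu-\mu_1$. The constant subtracted in the BMO oscillation will be
\[
c_B:=\iint \bigl(p_t(x_0,y)-p_t(0,y)\mathbf 1_{\{t>1\}}\bigr)\,d\mu_2(y,t).
\]
The $\mu_1$ part is handled through the $L^1$ bound coming from \eqref{eq:p-G-invariance}: $\int|\widetilde S_{\mu_1}|\,d\omega$ is at most $|\mu_1|(\widehat{B^*})$ plus the term $\int_{t>1}p_t(0,y)\,d|\mu_1|$, which is a constant and so is absorbed into $c_B$. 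Carleson together with the $G$-orbit covering gives $|\mu_1|(\widehat{B^*})\lesssim \omega(B)\|\mu\|_{\mathcal C}$. For $x\in B$, the $\mu_2$ part reduces to bounding $\iint|p_t(x,y)-p_t(x_0,y)|\,d|\mu_2|$. Here I would use the standard Dunkl--Poisson regularity estimate
\[
|p_t(x,y)-p_t(x_0,y)|\lesssim \frac{\|x-x_0\|}{t+d(x_0,y)}\cdot\frac{t}{t+d(x_0,y)}\cdot\frac{1}{\omega(B(x_0,t+d(x_0,y)))},
\]
valid for $\|x-x_0\|<r$, and then sum over the dyadic annuli $2^{k}r\le t+d(x_0,y)<2^{k+1}r$, $k\ge2$. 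Each annulus has $|\mu|$-mass $\lesssim\omega(B(x_0,2^{k+1}r))\|\mu\|_{\mathcal C}$, because it lies in a union of $|G|$ tents. This yields a bound $\lesssim\sum_k 2^{-k}\|\mu\|_{\mathcal C}$. The remaining step is to check, via \eqref{eq:local-integral}, that all the integrals converge absolutely for a.e.\ $x$, so that the splitting is legitimate.

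For part (ii), I would adapt Wilson's method. Take $f$ compactly supported in BMO. Since $\omega$ is doubling, $f\in L^2(d\omega)$, and the Calder\'on reproducing formula for the Poisson semigroup gives
\[
f=4\int_0^\infty (t\partial_t\mathcal P_t)^2 f\,\frac{dt}{t}.
\]
Via Green's identity, equivalently after writing $t\partial_t\mathcal P_t$ with its kernel, this expresses $f$ as an integral against the measure $d\nu=|t\nabla_{x,t}\mathcal P_tf|^2\,d\omega\,dt/t$, which is Carleson with norm $\lesssim\|f\|_{*,\kappa}^2$ by the Carleson characterization of Dunkl--BMO from Subsection~\ref{sec:dunkl-BMO-Hardy-Carleson}.

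The central difficulty is that the integrand is a product $t\partial_t\mathcal P_tf(y)\cdot t\partial_t p_t(x,y)$, so it is linear in $f$ and not a positive Poisson balayage. Following Wilson, and Garnett's construction, I would use a stopping-time (Carleson--Uchiyama type) decomposition in the variable $t$. Choose $\varepsilon\|f\|_*$-level sets: for $(y,t)$ with $|t\partial_t\mathcal P_tf(y)|$ small, the contribution is bounded by a constant times $\|f\|_*$ and goes into $g$. On the remaining "large" region, a Whitney/stopping argument replaces $t\partial_tp_t$ by differences $p_{t}-p_{2t}$ and, after an integration by parts in $t$, by a balayage $\iint p_t(x,y)\,d\mu(y,t)$. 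Here $\mu$ is a signed measure supported on the stopping surfaces, and it is Carleson because the stopping regions have small measure by the John--Nirenberg inequality.

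The low-frequency part $t>1$ (more generally $t$ larger than the support size) may produce a measure without finite total mass. This is exactly where the modified balayage $\widetilde S_{\mu,\mathcal P}$ is needed: subtracting $p_t(0,y)\mathbf 1_{\{t>1\}}$ changes the result only by a constant, so the identity \eqref{eq:balayage-decomposition} holds in BMO. The bound $\|g\|_\infty+\|\mu\|_{\mathcal C}\lesssim\|f\|_{*,\kappa}$ then follows from the stopping-time estimates.

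I expect this stopping-time step in (ii) to be the main obstacle. In the Dunkl setting the kernel is not translation invariant and not of product form, so I would carry out the argument on $G$-invariant tents, using the orbit distance $d$ throughout.
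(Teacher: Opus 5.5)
Your part (i) is correct and follows the paper's argument. You separate the part of $|\mu|$ on the $G$-orbit of an enlarged tent, bound it using \eqref{eq:p-G-invariance} and the Carleson condition, and sum the H\"older estimate for $p_t$ over dyadic annuli for the rest. The paper averages $|p_t(x,y)-p_t(z,y)|$ over $B\times B$, so $p_t(0,y)\mathbf 1_{\{t>1\}}$ cancels and no constant $c_B$ has to be shown finite; this difference is cosmetic.

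Part (ii), however, has a genuine gap at the two steps that carry it. First, you claim that the part of the reproducing formula over $\{(y,t):|t\partial_t\mathcal P_tf(y)|\le\varepsilon\|f\|_{\ast,\kappa}\}$ is bounded by $C\|f\|_{\ast,\kappa}$, and this is false. Since $\int|t\partial_tp_t(x,y)|\,d\omega(y)$ does not decay in $t$, a pointwise bound on $t\partial_t\mathcal P_tf$ only gives $\varepsilon\int_0^\infty dt/t=\infty$. If the claim held, every compactly supported $f$ with $\sup_{(y,t)}|t\partial_t\mathcal P_tf(y)|\le\varepsilon\|f\|_{\ast,\kappa}$ would satisfy $\|f\|_{L^\infty}\lesssim\|f\|_{\ast,\kappa}$. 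This already fails for $\kappa=0$: take $f=n^{-1/2}\phi(x)\sum_{k=1}^{n}\cos(2^kx_1)$ with a fixed bump $\phi\equiv1$ near $0$. Then $\|f\|_{\ast,\kappa}\sim1$ and $\sup|t\partial_t\mathcal P_tf|\lesssim n^{-1/2}$, but $f(0)=\sqrt n$. Relatedly, the reproducing formula integrates against $t\partial_t\mathcal P_tf\,d\omega\,dt/t$, which is linear in $f$ and not Carleson in general. It does not integrate against the quadratic measure $|t\nabla\mathcal P_tf|^2\,d\omega\,dt/t$. Also, Subsection~\ref{sec:dunkl-BMO-Hardy-Carleson} contains no Carleson characterization of Dunkl--BMO for you to invoke.

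Second, turning the ``large'' part into a positive balayage $\iint p_t\,d\mu$ with $\mu$ on stopping surfaces is exactly Wilson's Green-theorem step. Integrating by parts in $t$ alone leaves the interior measure $\partial_t(t\partial_t\mathcal P_tf)\,d\omega\,dt$. Its density is only $O(\|f\|_{\ast,\kappa}/t)$, so in general it is not Carleson. Removing it requires Green's identity on sawtooth regions together with $(\Delta_{D,y}+\partial_t^2)\mathcal P_tf=0$. For the Dunkl Laplacian that identity holds only on $G$-invariant domains, because the reflection terms are nonlocal. The paper names this as the reason Wilson's method is unavailable. Working on $G$-invariant tents does not help. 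The stopping regions are determined by $\mathcal P_tf$ for a non-$G$-invariant $f$, and symmetrizing them destroys the stopping property.

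The paper avoids Green's formula altogether. It runs a Carleson--Uchiyama iteration with the truncated, normalized kernel $K_t=p_t\chi_t^{\mathcal A}/\phi_t^{\mathcal A}$. This produces bounded, disjointly supported $b_n$ with $|f-\sum_nU_n|\le\mathcal A$. It then shows that the error $\psi^{(1)}=\sum_i\int(K-p)_{\ell_i}(\cdot,y)b_i(y)\,d\omega(y)$ satisfies $\|\psi^{(1)}\|_{\ast,\kappa}<1/4$, and iterates geometrically. Finally, Uchiyama's measure lemma collapses $\sum_i\int\mathbf p_{r_i}(\cdot,y)g_i(y)\,d\omega(y)$ into a single $\int\mathbf p_{t(y)}(\cdot,y)b_1(y)\,d\omega(y)$. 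The resulting graph measure $b_1(y)\delta_{t=t(y)}\,d\omega(y)$ is trivially Carleson.
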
  

\begin{remark}\label{rem-equivalent-norm}
$\rm(i)$ Theorem \ref{thm:balayage-measure} generalizes the results obtained in \cite{CarlesonLennart1976,GarnettJones1982,Uchiyama1980,Wilson1988}; when the multiplicity function $\kappa=0$, it reduces to the corresponding Euclidean result. 

$\rm(ii)$ By Theorem \ref{thm:balayage-measure}, the following equivalent
characterization of the $\mathrm{BMO}(\mathbb{R}^{N},d\omega)$ norm holds
for every compactly supported function $f$:
\begin{equation*}
  \|f\|_{\ast,\kappa}
  \sim
  \inf\left\{
    \|g\|_{L^{\infty}(\mathbb{R}^{N},d\omega)}
    +\|\mu\|_{\mathcal C}
  \right\},
\end{equation*}
where the infimum is taken over all representations of $f$ as in
\eqref{eq:balayage-decomposition}.
\end{remark}

As a consequence of Theorem \ref{thm:balayage-measure}, we derive the following novel characterization of the Hardy space $H^{1}(\mathbb{R}^{N}, d\omega)$ (see Subsection \ref{sec:dunkl-BMO-Hardy-Carleson}). For $t\in\R$ and $K>0$, define
\[
T_K(t)
=
\begin{cases}
-K, & t<-K,\\
t, &|t|\le K,\\
K, & t>K.
\end{cases}
\]
For $b\in{\mathrm{BMO}(\mathbb{R}^{N}, d\omega)}$ and $K>0$, set
\begin{equation}\label{eq:bN-notation}
b_K:=T_K\circ b=\max\{\min\{b,\,K\},\,-K\}.
\end{equation}
For $f\in L^1(\mathbb{R}^{N}, d\omega)$ and $b\in{\mathrm{BMO}(\mathbb{R}^{N}, d\omega)}$, if the limit
\begin{equation*}
  \lim_{K\to\infty}
\int_{\mathbb{R}^{N}}f(x)b_K(x)\,d\omega(x)
\end{equation*}
exists and is finite, then we say that the principal value integral
of $fb$ exists, and define
\begin{equation*}
\pv\int_{\mathbb{R}^{N}} f(x)b(x)d\omega(x)
:=
\lim_{K\to\infty}
\int_{\mathbb{R}^{N}}f(x)b_K(x)\,d\omega(x).
\end{equation*}
Note that if $fb\in{L^{1}(\mathbb{R}^{N}, d\omega)}$, then this principal value integral coincides with the usual integral by the dominated convergence theorem. We emphasize that the principal value integral of $fb$ is defined by truncating the values of $b$, rather than by spatial truncation.

\begin{theorem}\label{thm-Hardy-balayage}
The following statements are mutually equivalent:
\begin{enumerate}
\item [$\rm(i)$]
$f\in{H^{1}(\mathbb{R}^{N}, d\omega)}$;

\item [$\rm(ii)$]
$f\in{L^{1}(\mathbb{R}^{N}, d\omega)}$, the principal value integral 
$\operatorname{p.v.}\int_{\mathbb R^{N}} f(y)\widetilde{S}_{\mu,P}(y)d\omega(y)$ exists for every $\mu\in \mathfrak R(\mathbb R^{N+1}_{+})$, and 
\begin{equation*}
  \sup_{\substack{\mu\in \mathfrak R(\mathbb R^{N+1}_{+})\\ \|\mu\|_{\mathcal C}=1,\ c\in \mathbb{R}}}
\left|
\operatorname{p.v.}\int_{\mathbb R^{N}} f(y)\,\left(\widetilde{S}_{\mu,P}(y)+c\right)\,d\omega(y)
\right|<\infty.
\end{equation*}
\end{enumerate}
Moreover, we have
\begin{equation*}
  \|f\|_{H^{1}(\mathbb{R}^{N}, d\omega)}\thicksim\|f\|_{L^{1}(\mathbb{R}^{N}, d\omega)}+\sup\limits_{\substack{\mu\in{\Re(\mathbb{R}^{N+1}_{+})},\\\|\mu\|_{\mathcal{C}}=1}}\left|\operatorname{p.v.}\int_{\mathbb{R}^{N}}f(y)\cdot \widetilde{S}_{\mu,\mathcal{P}}(y)d\omega(y)\right|.
\end{equation*}
\end{theorem}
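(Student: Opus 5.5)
The plan is to prove Theorem \ref{thm-Hardy-balayage} by combining the Dunkl $H^1$--$\mathrm{BMO}$ duality (recalled in Subsection \ref{sec:dunkl-BMO-Hardy-Carleson}) with Theorem \ref{thm:balayage-measure}. The dual pairing will be realized through truncations $b_K$ on atoms.

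\emph{Direction (i)$\Rightarrow$(ii).} Let $f\in H^1(\mathbb{R}^N,d\omega)$ and $b\in\mathrm{BMO}(\mathbb{R}^N,d\omega)$.

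First note that $|T_K(s)-T_K(s')|\le|s-s'|$. Hence $\|b_K\|_{\ast,\kappa}\le 2\|b\|_{\ast,\kappa}$ uniformly in $K$, and each $b_K$ is bounded.

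For $f=\sum_j\lambda_j a_j$ an atomic decomposition, pair each atom with $b_K$. Since $\int a_j\,d\omega=0$,
\[
\int a_j b_K\,d\omega=\int a_j\bigl(b_K-(b_K)_{B_j}\bigr)\,d\omega .
\]
Use the orbit-enlarged ball where the Dunkl atomic theory requires it. This gives $|\int a_j b_K\,d\omega|\lesssim\|b\|_{\ast,\kappa}$ uniformly in $j,K$.

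On each fixed atom, $b_K\to b$ in $L^1(B_j)$ as $K\to\infty$, because $b\in L^1_{\rm loc}$. So $\int a_jb_K\,d\omega\to\int a_j b\,d\omega$.

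Dominated convergence in the sum over $j$ then shows that $\lim_K\int f b_K\,d\omega$ exists. It equals the duality pairing $\langle f,b\rangle$, and
\[
\Bigl|\lim_K\int f b_K\,d\omega\Bigr|\lesssim\|f\|_{H^1}\|b\|_{\ast,\kappa}.
\]
The constant $c$ is harmless: $(b+c)_K$ and $b_K+c$ agree off the set $\{|b|>K-|c|\}$, and there their difference is bounded by $2|c|$. Since $f\in L^1$ and $\omega(\{|b|>K-|c|\})$ is small where $f$ lives, this difference tends to zero in the pairing by dominated convergence. Also $\int f\,d\omega=0$ for $f\in H^1$.

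Now take $b=\widetilde S_{\mu,\mathcal P}$ and apply Theorem \ref{thm:balayage-measure}(i). This yields (ii) together with the bound ``$\gtrsim$'' in the norm equivalence, since $\|f\|_{L^1}\le\|f\|_{H^1}$.

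\emph{Direction (ii)$\Rightarrow$(i).} Let $A$ be the supremum in (ii). Because constants are allowed in (ii), taking $\mu=0$ and $c$ arbitrary forces $\int f\,d\omega=0$.

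By duality, it suffices to bound $|\int f\varphi\,d\omega|$ for $\varphi$ in a class dense enough to define the $H^1$ norm. Take compactly supported bounded $\varphi$ with $\|\varphi\|_{\ast,\kappa}\le1$; such $\varphi$ are enough via the $\mathrm{VMO}$/$\mathrm{CMO}$ predual characterization, or via testing on $L^\infty_c$ functions as in Coifman--Weiss. Apply Theorem \ref{thm:balayage-measure}(ii) to write
\[
\varphi=g+\widetilde S_{\mu,\mathcal P}+c,\qquad \|g\|_\infty+\|\mu\|_{\mathcal C}\lesssim1.
\]
Since $\varphi$ and $g$ are bounded, $\widetilde S_{\mu,\mathcal P}+c=\varphi-g$ is bounded. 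The p.v.\ integral of $f(\widetilde S_{\mu,\mathcal P}+c)$ is then the ordinary integral. Therefore
\[
\Bigl|\int f\varphi\,d\omega\Bigr|\le\|g\|_\infty\|f\|_{L^1}+\|\mu\|_{\mathcal C}\,A\lesssim\|f\|_{L^1}+A,
\]
where we normalize $\mu$ (the case $\mu=0$ is trivial). Consequently the functional $\varphi\mapsto\int f\varphi$ is bounded in the $\mathrm{BMO}$ norm on this dense class. Duality with $\mathrm{CMO}$, whose dual is $H^1$, or the Dunkl maximal characterization gives $f\in H^1$ with $\|f\|_{H^1}\lesssim\|f\|_{L^1}+A$.

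\emph{Expected obstacle.} The main obstacle is this last step: certifying that testing $f\in L^1$ against compactly supported bounded $\mathrm{BMO}$-normalized functions controls $\|f\|_{H^1}$. The first attempt will be the Dunkl analogue of Coifman--Weiss. The space $(\mathbb{R}^N,\|\cdot\|,\omega)$ is doubling, so $H^1=(\mathrm{VMO})^*$. Closures of $C_c$ functions in $\mathrm{BMO}$ are contained in $\mathrm{VMO}$, and the functional defined by $f$ extends from $C_c$ to $\mathrm{VMO}$ by density. One must still identify the representing $H^1$ element with $f$; this uses $\int f\varphi\,d\omega$ for $\varphi\in C_c$ and the uniqueness of distributions.

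A second delicate point is the interchange of $K$-limits with the atomic sum in direction (i). The uniform bound on $\|b_K\|_{\ast,\kappa}$ handles it.
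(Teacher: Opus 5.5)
Your proposal follows essentially the same route as the paper. For (i)$\Rightarrow$(ii) you use an atomic decomposition with a uniform bound on $\|b_K\|_{\ast,\kappa}$, dominated convergence on each atom and then in the series, and the shift identity $\lim_K\int f\,(b+c)_K\,d\omega=\lim_K\int f\,b_K\,d\omega+c\int f\,d\omega$. For (ii)$\Rightarrow$(i) you test against $C_c(\mathbb{R}^N)$ functions decomposed by Theorem~\ref{thm:balayage-measure}(ii), extend by density to the $\mathrm{BMO}$-closure of $C_c(\mathbb{R}^N)$ (not to any larger VMO), and use that its dual is $H^1(\mathbb{R}^N,d\omega)$.

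One small correction: $\mu=0$ is not admissible in the supremum, which requires $\|\mu\|_{\mathcal C}=1$. To force $\int f\,d\omega=0$, use instead, as the paper does, $d\mu=\delta_{t=1}\,d\omega(y)$; it has $\|\mu\|_{\mathcal C}=1$ and $\widetilde S_{\mu,\mathcal P}\equiv1$. Alternatively, take any normalized $\mu$ together with your shift identity. Once $\int f\,d\omega=0$, that same identity shows your constant $A$ equals the constant-free supremum in the stated norm equivalence.
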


\subsection{Outline of the main techniques} 
To prove Theorem \ref{thm:balayage-measure}, we make some observations: $(1)$ The Green formula related to Dunkl operators holds for the bounded, regular and G-invariant domain in $\mathbb{R}^{N}\times(0,\infty)$. Hence, Wilson's idea \cite{Wilson1988} is not feasible in the Dunkl setting. $(2)$ The Dunkl--Poisson integral involves Dunkl and Euclidean metrics which are not equivalent, and the Dunkl measure of ball relies on center and radius. 

This paper will develop the methods in \cite{CarlesonLennart1976,Uchiyama1980} via some new techniques to bypass the above difficulties. To be more precise,

$\bullet$ we introduce an auxiliary kernel 
\begin{equation*} 
  K_{t}(x,y)=\frac{p_{t}(x,y)\chi_{t}^{\mathcal{A}}(x,y)}  {\phi_{t}^{\mathcal{A}}(x)},\qquad x,y\in{\mathbb{R}^{N}},t>0.
\end{equation*}
where $\chi_{t}^{\mathcal{A}}(x,y)$ and $\phi_{t}^{\mathcal{A}}(x)$ are auxiliary functions; see Subsection \ref{sub:approximating-kernel}. The key step in constructing this kernel is to apply a suitable smooth cutoff to the Poisson kernel so that the resulting kernel is supported in a Euclidean ball. It is designed to overcome the difficulties caused by the orbit structure and is indispensable for constructing the bounded functions with pairwise disjoint supports in Proposition \ref{pro:function-bn} below. Furthermore, it provides the essential connection underlying the decomposition with respect to $\mathbf{p}_{t}(x,y)$ (see Theorem \ref{thm-BMO-onesetp}), and it serves as an effective bridge between the Dunkl metric and the Euclidean metric (see Lemmas \ref{lem:lowbounded-kernel} and \ref{lem:cube-estimate}).
 
$\bullet$ To prove the key estimate $\|\psi^{(1)}\|_{\ast,\kappa}<\frac{1}{4}$ (see Lemma \ref{lem:psi-estimate}), the main technical difficulties associated with the Dunkl measure and metric can be circumvented by suitably decomposing $K_t(x,y)-p_t(x,y)$.

The article is arranged as follows. In Section \ref{sec:dunkl-preliminaries}, we recall some basic facts and introduce the notions needed in the Dunkl setting. In Section \ref{sec:convolution-decomposition}, we establish a decomposition of Dunkl--BMO functions via a modified Dunkl--Poisson integral. In Section \ref{sec:thm1.1-1.2}, we use this decomposition to prove Theorem \ref{thm:balayage-measure} and also prove Theorem \ref{thm-Hardy-balayage}. Lastly, Section \ref{sec:Dunkl-heat} is dedicated to establishing a decomposition of compactly supported Dunkl--BMO functions in terms of the generalized balayage of Carleson measures associated with the Dunkl--heat semigroup.

In the end, we make some conventions on notation. In this paper, let $\mathbb{N}:=\{1,2,\ldots\}$ and $\mathbb{Z}:=\{0,\pm1,\pm2,\ldots\}$. Let
\[
B(x,r):=\left\{y\in{\mathbb{R}^{N}}:\,\|x-y\|<r\right\}
\]
stand for the ball in $\mathbb{R}^{N}$ with center $x$ and radius $r$. Denote the cube in $\mathbb{R}^{N}$ by 
\begin{equation*}
  Q:=Q(x,\ell(Q)):=\left\{y\in{\mathbb{R}^{N}}:\,-\ell(Q)/2\leq y_{i}-x_{i}<\ell(Q)/2,\,\,i=1,2,\ldots,N\right\},
\end{equation*}
where $x,\ell(Q)$ are center and side length of $Q$, respectively.
Moreover, dilated cubes are taken with the corresponding half-open convention.
The notation $f\lesssim g$ and $f\thicksim g$ mean that there exists $C\ge1$ such that $f\le Cg$ and $f/C\le g\le Cf$, respectively; 
subscripts on $\lesssim$ indicate the allowed structural dependence of the implicit constant.

\section{Dunkl preliminaries}\label{sec:dunkl-preliminaries}
In this section, we review some preliminaries of Dunkl theory, including basic definitions and notation, Dunkl–BMO and Dunkl–Hardy spaces, Carleson-type measures, and some useful properties of the Dunkl–Poisson kernel, which are presented in Subsections \ref{sec:dunkl-theory}, \ref{sec:dunkl-BMO-Hardy-Carleson} and \ref{sec:dunkl-poisson}, respectively.

\subsection{Dunkl theory}\label{sec:dunkl-theory}

For background on Dunkl operators and related applications, we refer the reader to
\cite{AmriHammi2019,AmriSifi2012,deJeu1993,Dunkl1989,Dunkl1991,Opdam1993}.  
The Euclidean inner product and norm on $\R^N$ are denoted by $\ip{x}{y}$ and $\norm{x}:=(\sum_{i=1}^{N}|x_{i}|^{2})^{1/2}$.

Let $R\subseteq \R^N\setminus\{0\}$ be a reduced root system.  Thus
$$
        R\cap \R\alpha=\{\alpha,-\alpha\}, \qquad \sigma_\alpha(R)=R,
$$
where
$$
        \sigma_\alpha(x):=x-2\frac{\ip{x}{\alpha}}{\norm{\alpha}^2}\alpha.
$$
We normalize the root normals by $\norm{\alpha}^2=2$.  
This choice does not change the reflection or the Dunkl difference quotient, and changes $d\omega$ only by an overall constant.

Let $G$ be the finite reflection group generated by  $\{\sigma_\alpha:\alpha\in R\}$.  
If $E\subseteq \R^N$ and $x\in\R^N$, set
$$
        O(E):=\bigcup_{\sigma\in G}\sigma(E),  \qquad   O(x):=O(\{x\}).
$$
The Dunkl metric 
\begin{equation*}
  d(x,y):=\min\limits_{\sigma\in{G}}\|x-\sigma(y)\|.
\end{equation*}
satisfies $\abs{\norm{x}-\norm{y}}\le d(x,y)\le \norm{x-y}$, $x,y\in\R^N$. Moreover, we have the following elementary inequality:
\begin{equation*} 
        d(x_0,y_0)=\min_{\sigma\in G}\norm{x_0-\sigma (y_0)}  \le   \sqrt{\norm{x_0}^2+\norm{y_0}^2-2\ip{y_0}{\xi}}\leq \max_{\sigma\in G}\norm{x_0-\sigma (y_0)},    \qquad \xi\in\co(O(x_0)).
\end{equation*}

Let $\kappa:R\to[0,\infty)$ be $G$-invariant.  The Dunkl operator in direction $\xi\in\R^N$ is
$$
     T_\xi f(x):=\partial_\xi f(x)   +\sum_{\alpha\in R}
        \frac{\kappa(\alpha)}{2}\ip{\alpha}{\xi}   \frac{f(x)-f(\sigma_\alpha x)}{\ip{\alpha}{x}}.
$$
The Dunkl measure is $d\omega(x)=\omega(x)dx$, where
$$
     \omega(x):=\prod_{\alpha\in R}\abs{\ip{\alpha}{x}}^{\kappa(\alpha)}.
$$
It is $G$-invariant.  Put
$$
        \mathbf N:=N+\sum_{\alpha\in R}\kappa(\alpha).
$$
The measure $d\omega$ satisfies the doubling condition in the sense that
$$
     \lambda^N\omega(B(x,r)) \lesssim  \omega(B(x,\lambda r))  \lesssim
        \lambda^{\mathbf N}\omega(B(x,r)),   \qquad \lambda\ge1,
$$
and has upper and lower bounded estimates of any orbit of ball,
$$
     \omega(B(x,r))   \le \omega(O(B(x,r)))    \le   |G|\,\omega(B(x,r)).
$$
where $x\in\mathbb{R}^{N}$ and $r>0$.

\subsection{Dunkl--BMO, Dunkl--Hardy spaces and Carleson-type measure}\label{sec:dunkl-BMO-Hardy-Carleson}

The definitions of $\mathrm{BMO}(\mathbb{R}^{N}$ $,d\omega)$, $\Re(\mathbb{R}^{N+1}_{+})$ and $H^{1}(\mathbb{R}^{N},d\omega)$ are as follows.
\begin{definition}
A real-valued function $f\in{L_{\mathrm{loc}}^{1}(\mathbb{R}^{N}, d\omega)}$ is said to be in $\mathrm{BMO}(\mathbb{R}^{N}, d\omega)$, if
\begin{equation*}
\|f\|_{\ast,\kappa}:=\sup\limits_{x\in{\mathbb{R}^{N}},\,r>0}\frac{1}{\omega(B(x,r))}\int_{B(x,r)}\left|f(y)-\mathcal{M}_{B(x,r)}(f)\right|d\omega(y)<\infty,
\end{equation*}
where 
\begin{equation*}
 \mathcal{M}_{B(x,r)}(f):=\frac{1}{\omega(B(x,r))}\int_{B(x,r)}f(y)d\omega(y).
\end{equation*}
Here and in the sequel, $B$ under the supremum always represents a ball in $\mathbb{R}^{N}$.
\end{definition}
\begin{definition}
A real-valued signed Borel measure $\mu$ on $\mathbb{R}^{N+1}_{+}$ is in $\Re(\mathbb{R}^{N+1}_{+})$ when there exists $C>0$ such that for any ball $B\subset\mathbb{R}^{N}$,
\begin{equation*}
  |\mu|(\widehat{B})\leq C\omega(B), 
\end{equation*}
where $|\mu|$ represents the total variation measure of $\mu$, and $\widehat{B}:=B\times(0, r)$ with $r$ the radius of $B$. We equip $\Re(\mathbb{R}^{N+1}_{+})$ with the norm 
\begin{equation*}
\|\mu\|_{\mathcal{C}}:=\sup_{B\subset\mathbb{R}^{N}}\frac{|\mu|(\widehat{B})}{\omega(B)}.
\end{equation*}
Such measures in $\Re(\mathbb{R}^{N+1}_{+})$ are referred to as Carleson-type measures.
\end{definition}

We denote by $H^{1}(\mathbb{R}^{N}, d\omega)$ the Hardy space associated with the Dunkl operator defined in \cite{CS1977}.
\begin{definition}
A measurable function $a$ is a $(1,\infty)$-atom if there exists a ball $B\subset\mathbb{R}^{N}$ such that $\supp a\subset B$,
\begin{equation*}
\|a\|_{L^{\infty}(\mathbb{R}^{N}, d\omega)}\leq \frac{1}{\omega(B)}\quad \mathrm{and}\quad \int_{\mathbb{R}^{N}}a(x)d\omega(x)=0.
\end{equation*}
A function $f\in{L^{1}}(\mathbb{R}^{N}, d\omega)$ is in $H^{1}(\mathbb{R}^{N}, d\omega)$ if and only if $f=\sum_{j=1}^{\infty}\alpha_{j}a_{j}$ in $L^{1}(\mathbb{R}^{N}, d\omega)$, where for every $j$, $a_{j}$ is a $(1,\infty)$-atom and $\alpha_{j}\in{\mathbb{C}}$ with $\sum_{j=1}^{\infty}|\alpha_{j}|<\infty$. The space $H^{1}(\mathbb{R}^{N}, d\omega)$ is a Banach space endowed with the norm
\begin{equation*}
\|f\|_{H^{1}(\mathbb{R}^{N}, d\omega)}:=\inf\left\{\sum_{j=1}^{\infty}|\alpha_{j}|\right\},
\end{equation*}
where the infimum is taken over all representations of $f$ as above.
\end{definition}
 
\subsection{Properties of Dunkl--Poisson kernel}\label{sec:dunkl-poisson}

The Dunkl–Poisson integral can be expressed in the form of a Dunkl--convolution.
The following standard positivity, symmetry, and pointwise estimates for the Dunkl--Poisson kernel $p_t(x,y)$ follow from the positivity of the Dunkl intertwining operator and the  kernel estimates for the Dunkl--convolution and auxiliary analytical tools.

For $x,y\in\R^N$, $t>0$,
\begin{equation}\label{eq:p-symmetry}
        p_t(x,y)=p_t(y,x)>0, 
\end{equation}
and
\begin{equation}\label{eq:p-G-invariance}
\int_{\mathbb{R}^{N}}p_{t}(x,y)d\omega(y)=1;
\end{equation}
see \cite{Margit2003,MargitMichael1998,ThangaveluXu2005}.
There exist positive constants $C_{0},C_{1},C_{2},C_{3}$ depending solely on the dimension $\mathbf N$ such that the following mixed-metric upper and lower estimates hold:
\begin{align}\label{eq:poisson-upper}
\notag\frac{t}{V(x,y,t+\|x-y\|)}\frac{C_{0}}{t+\norm{x-y}}
&\leq p_t(x,y)\\
&\leq C_{1}\min\left\{\frac{1}{V(x,y, t+d(x,y))}\frac{t}{t+\|x-y\|},\frac{t}{\big(t+\big|\|x\|-\|y\|\big|\big)^{\mathbf{N}+1}}\right\},
\end{align}
and
\begin{equation}\label{eq:poisson-smooth}
\left\|\nabla_{x}p_{t}(x,y)\right\|\lesssim\frac{p_{t}(x,y)}{t+d(x,y)}\leq C_{2}\frac{1}{V(x,y, t+d(x,y))}\frac{t}{t+\|x-y\|}\frac{1}{t+d(x,y)};
\end{equation}
see \cite{JacekHejna2020,JiuLi2023}. Moreover, if $\|y-y_{1}\|\leq (t+d(x,y))/2$, then
\begin{equation}\label{eq:poisson-Holder}
\left|p_{t}(x,y)-p_{t}(x,y_{1})\right|\leq C_{3}\frac{\|y-y_{1}\|}{V(x,y, t+d(x,y))}\frac{1}{t+\|x-y\|}\frac{t}{t+d(x,y)},
\end{equation}
where $x,y,y_{1}\in\R^N$, $t>0$. Here, we denote $\nabla_{x}:=(\partial_{x_{1}},\ldots,\partial_{x_{N}})$ and $V(x,y, t):=\max\{\omega(B(x,t)), \omega(B(y,t))\}$. The estimate \eqref{eq:poisson-Holder} follows directly from \eqref{eq:poisson-smooth}; we omit the details.

\section{A BMO decomposition in terms of a modified Dunkl--Poisson integral}\label{sec:convolution-decomposition}
In this section, we establish the following decomposition theorem for
compactly supported functions in $\mathrm{BMO}(\mathbb{R}^{N},d\omega)$, which serves as a key intermediate step toward proving Theorem~\ref{thm:balayage-measure}.
\begin{theorem}\label{thm-BMO-onesetp}
Let $f\in{\mathrm{BMO}(\mathbb{R}^{N}, d\omega)}$ with compact support and $\|f\|_{\ast,\kappa}=1$. Then there exist bounded functions $\{g_{i}\}_{i=0}^{\infty}$ and decreasing positive numbers $\{r_{i}\}_{i\in{\mathbb{N}}}$ such that 
\begin{equation*} 
 f=\sum\limits_{i=1}^{\infty}\int_{\R^N}\mathbf{p}_{r_{i}}(\cdot,y)g_{i}(y)\,d\omega(y)+g_{0}\qquad \mathrm{in}\quad\mathrm{BMO}(\mathbb{R}^{N}, d\omega),
\end{equation*}
with 
\begin{equation*}
  \left\|\sum\limits_{i=1}^{\infty}|g_{i}|\right\|_{L^{\infty}(\mathbb{R}^{N}, d\omega)}\lesssim1,\quad \mathrm{and} \quad \|g_{0}\|_{L^{\infty}(\mathbb{R}^{N}, d\omega)}\lesssim1,
\end{equation*}
where the implicit constants are independent of $f,g_{i},g_{0}$. Here and throughout the sequel, we use the notation
\begin{equation*}
\mathbf{p}_{r_{i}}(\cdot,y):=p_{r_{i}}(\cdot,y)-p_{r_{i}}(0,y)\cdot \mathbf{1}_{\{r_{i}>1\}}(y,r_{i}).
\end{equation*}
\end{theorem}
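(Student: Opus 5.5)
We will follow the Carleson--Uchiyama iteration. The key one-step statement is this. For every compactly supported $f$ with $\|f\|_{\ast,\kappa}=1$ there exist $r_1>0$, a bounded $g_1$ with $\|g_1\|_{L^\infty}\lesssim1$, and a bounded $h_1$ with $\|h_1\|_{L^\infty}\lesssim1$, such that
\begin{equation*}
f=\mathcal{P}_{r_1}g_1+h_1+\psi^{(1)}\quad\text{modulo constants},\qquad \|\psi^{(1)}\|_{\ast,\kappa}<\tfrac14 .
\end{equation*}
Here $\psi^{(1)}$ is again compactly supported, or at least has compactly supported oscillation. Moreover, $g_1$ is a sum of pieces supported on pairwise disjoint sets at scale $r_1$, which is what gives $\sum_i|g_i|\lesssim1$ at the end.

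Granting the one-step statement, we apply it to $4\psi^{(1)}$, then to $4\psi^{(2)}$, and so on. The $i$-th step contributes $\mathcal{P}_{r_i}g_i$ with $\|g_i\|_\infty\lesssim 4^{-i}$ together with a bounded correction, and the resulting series converges. Taking $g_0=\sum_i h_i$, a geometric series, gives $\|g_0\|_\infty\lesssim1$. The bound $\sum_i|g_i|\lesssim\sum_i 4^{-i}$ gives the claim on $\|\sum_i|g_i|\|_\infty$. The scales can be chosen decreasing, because each residual is finer than the previous one.

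We then replace $p_{r_i}$ by $\mathbf{p}_{r_i}$. For $r_i\le1$ nothing changes. For $r_i>1$ the extra term $p_{r_i}(0,y)g_i(y)$ integrates to a constant, and \eqref{eq:poisson-upper} with \eqref{eq:p-G-invariance} shows that $\sum_i\int p_{r_i}(0,y)|g_i(y)|\,d\omega(y)$ is finite. So the identity holds in $\mathrm{BMO}(\mathbb{R}^N,d\omega)$ modulo constants. Convergence of $\sum_i\int\mathbf{p}_{r_i}(\cdot,y)g_i(y)\,d\omega(y)$ in BMO follows from the fact that $\mathcal{P}_t$ maps $L^\infty$ into BMO with norm $\lesssim1$.

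For the one-step statement we follow Uchiyama. We take a dyadic-type (Christ) grid of Euclidean cubes at a scale $r=r_1$ much smaller than the size of $\supp f$. We take the function $\varphi=f-$(local averages), truncated at a level $\lambda$. Following Proposition~\ref{pro:function-bn}, we build bounded functions $b_n$ with pairwise disjoint supports on the regions where the averages of $f$ over consecutive generations jump by $\sim\lambda$. We set $g_1=\sum_n b_n$ with $\|g_1\|_\infty\lesssim\lambda^{-1}\cdot$const after normalization. The smoothing is done with the auxiliary kernel $K_t(x,y)=p_t(x,y)\chi_t^{\mathcal A}(x,y)/\phi_t^{\mathcal A}(x)$ instead of $p_t$. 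This kernel is supported in a Euclidean ball and has $\int K_t(x,y)\,d\omega(y)=1$, which lets local averages be reproduced exactly, with the Euclidean geometry respected and the orbit tails removed.

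Writing $p_t=K_t+(p_t-K_t)$ gives
\begin{equation*}
\psi^{(1)}=\bigl(f-\textstyle\sum_n K_{r}b_n-h_1\bigr)+\textstyle\sum_n(K_r-p_r)b_n .
\end{equation*}
The first part is controlled by the John--Nirenberg inequality in the doubling space $(\mathbb{R}^N,d\omega)$ via the lower kernel bounds (Lemma~\ref{lem:lowbounded-kernel}) and the cube estimates (Lemma~\ref{lem:cube-estimate}); choosing $\lambda$ large makes it small.

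The main obstacle is the second part, the $\|\cdot\|_{\ast,\kappa}$-estimate of $(K_r-p_r)b$, i.e.\ Lemma~\ref{lem:psi-estimate}. The difference $p_t-K_t$ lives where $\|x-y\|\gtrsim t$, but possibly $d(x,y)\ll t$, on reflected orbit copies. There the Poisson kernel is only bounded by $V(x,y,t+d(x,y))^{-1}\,t/(t+\|x-y\|)$, which gives decay in the Euclidean distance but mass concentrated near orbits. We plan to decompose $K_t-p_t$ into a near-orbit piece and a far piece. The far piece is handled by \eqref{eq:poisson-Holder} and \eqref{eq:poisson-smooth}, which give smallness in oscillation on balls of radius $\le r$ and trivial $L^\infty$ control on larger balls. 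The near-orbit piece is handled by using the extra factor $t/(t+\|x-y\|)$ and choosing the cutoff radius in $\chi^{\mathcal A}$ as a large multiple $\mathcal A t$, so that its contribution is $\lesssim\mathcal A^{-1}$. Taking $\mathcal A$ large then gives the bound $<1/4$.
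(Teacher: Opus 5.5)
\textbf{Gap: a single smoothing scale per step cannot work.} Your one-step lemma is false as stated, and this sinks the plan. Since $p_t(x,\cdot)\ge0$ and $\int_{\mathbb{R}^N}p_t(x,y)\,d\omega(y)=1$, a single-scale term obeys $\|\mathcal{P}_{r_1}g_1\|_{L^\infty}\le\|g_1\|_{L^\infty}$. Likewise $\sum_nK_rb_n=K_r(\sum_nb_n)$ is bounded by $\|\sum_nb_n\|_{L^\infty}$. So your step writes a normalized $f$ as an $L^\infty$ function of norm $\lesssim1$ plus a remainder of norm $<1/4$. Iterating exactly as you propose, with $\|g_i\|_{L^\infty}\lesssim4^{-i}$, makes $\sum_i\mathcal{P}_{r_i}g_i+g_0$ a bounded function. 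That would mean every compactly supported Dunkl--BMO function is bounded modulo constants, which fails for any unbounded example such as a truncated $\log(1/\|x-x_0\|)$.

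The missing idea is that one step of the Carleson--Uchiyama scheme already uses infinitely many geometrically separated scales. In Proposition~\ref{pro:function-bn} each $b_n$ is smoothed at its own scale, $U_n=K_{\ell_n}b_n$ with $\ell_n=2^{m_0-n\mathcal{A}}$. The $b_n$ come from a stopping-time argument on the means of $f_n=f-\sum_{i<n}U_i$, and are then pruned via John--Nirenberg (Lemma~\ref{lem:U-V-estimate}) to have pairwise disjoint supports. Hence $\sum_n|b_n|\le C_6\mathcal{A}$ pointwise, while $\sum_nU_n$ may be unbounded. What is bounded after one step is $f-\sum_nU_n$, by $\mathcal{A}$ via (c3) and Lebesgue differentiation. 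The only BMO-small remainder is $\psi^{(1)}=\sum_n(K_{\ell_n}-p_{\ell_n})b_n$. Note that the tail smallness of $K_t-p_t$ must beat $|b_n|\sim\mathcal{A}$, which is why the paper cuts off at $\mathcal{A}^4t$ (tail cost $\mathcal{A}^{-3}$) rather than at $\mathcal{A}t$.

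\textbf{Consequences for the final assembly.} The $g_i$ in the theorem are not indexed by iteration steps. The paper puts all iterations on the common scale lattice $\ell_i$ and sets $g_i=\sum_jb_i^{(j)}$. Then $\|\sum_i|g_i|\|_{L^\infty}\lesssim\sum_j2^{-j}\mathcal{A}$ follows from disjointness within each iteration plus geometric decay across iterations, while $\|g_i\|_{L^\infty}$ need not decay in $i$. Your convergence argument via $\|\mathcal{P}_t\|_{L^\infty\to\mathrm{BMO}}\lesssim1$ needs $\sum_i\|g_i\|_{L^\infty}<\infty$, so it is unavailable. The paper instead proves a.e.\ absolute convergence of $\sum_{i,j}\int|\mathbf{p}_{\ell_i}(x,y)b_i^{(j)}(y)|\,d\omega(y)$ by integrating over balls $B(0,R)$.

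The residual is also not compactly supported, since the Poisson tails remain. The paper truncates $\psi^{(1)}$ at radius $2^{m_0-1+n_0\mathcal{A}}$ using the decay \eqref{eq:main-estimate-7}. Each residual therefore lives on a larger ball, and the next iteration starts at \emph{coarser} scales, the opposite of your claim that each residual is finer. After reindexing, the coarse pieces with $i\le0$ (at most $(j-1)n_0$ of them in the $j$-th iteration) are absorbed into $g_0$ using $\int|\mathbf{p}_{\ell}(x,y)|\,|b(y)|\,d\omega(y)\le2\|b\|_{L^\infty}$.
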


To show Theorem \ref{thm-BMO-onesetp}, we first construct an auxiliary kernel as a replacement for the Poisson kernel $p_t(x,y)$, and establish its properties in Subsection \ref{sub:approximating-kernel}. Subsequently, by using the properties of this auxiliary kernel, we provide the proof of Theorem \ref{thm-BMO-onesetp} in Subsection \ref{sub:a decomposition}.

\subsection{An auxiliary kernel}\label{sub:approximating-kernel}
\hskip\parindent

Let $\eta\in{C^{\infty}([0,\infty))}$ satisfy $0\leq\eta(y)\leq1$, $y\in[0,\infty)$, and $\eta(y)=1$, $y\in{[0,1]}$, and $\eta(y)=0$, $y\in{[2,\infty)}$. Fix a constant \(\mathcal{A}>0\). For $x,y\in{\mathbb{R}^{N}}$ and $t>0$, define 
\begin{equation*}
  \chi_{t}^{\mathcal{A}}(x,y):=\eta\left(\frac{\|x-y\|}  {\mathcal{A}^{4}t}\right), 
\end{equation*}
and set
\begin{equation*}
  \phi_{t}^{\mathcal{A}}(x):=\int_{\mathbb{R}^{N}}p_{t}(x,y)\chi_{t}^{\mathcal{A}}(x,y)d\omega(y).
\end{equation*}
We then define the auxiliary kernel
\begin{equation}\label{eq:approximating-kernel}
  K_{t}(x,y):=\frac{p_{t}(x,y)\chi_{t}^{\mathcal{A}}(x,y)}  {\phi_{t}^{\mathcal{A}}(x)},\quad x,y\in{\mathbb{R}^{N}},t>0.
\end{equation}

We give the following properties for the above functions.
\begin{lemma}\label{lem-kernel-properties}
Fix a constant $\mathcal{A}$ such that $\mathcal{A}>\sqrt{N}$. The following statements hold:

$\rm{(a1)}$ For any $x,y,z\in{\mathbb{R}^{N}}$ and $t>0$,
\begin{equation*}
  \left|\chi_{t}^{\mathcal{A}}(x,y)-\chi_{t}^{\mathcal{A}}(z,y)\right|\lesssim\frac{\|x-z\|}{\mathcal{A}^{4}t},
\end{equation*}
where the implicit constant only depends on $\eta$.

$\rm{(a2)}$ There exists a constant $C_{4}>0$ only depending on $\mathbf{N}$ such that for any $x,z\in{\mathbb{R}^{N}}$ and $t>0$,
\begin{eqnarray*} 
C_{4}\leq\phi_{t}^{\mathcal{A}}(x)\leq1,
\end{eqnarray*}
and
\begin{eqnarray*} 
\left|\phi_{t}^{\mathcal{A}}(x)-\phi_{t}^{\mathcal{A}}(z)\right|\lesssim(1+C_{2})\frac{\|x-z\|}{\mathcal{A}^{4}t}.
\end{eqnarray*}

$\rm{(a3)}$ Let $x,y,z\in{\mathbb{R}^{N}}$ and $t>0$. Then $\supp K_{t}(x,\cdot)\subset B(x,2\mathcal{A}^{4}t)$ and
\begin{equation*}
  \int_{\mathbb{R}^{N}}K_{t}(x,y)d\omega(y)=1,\qquad K_{t}(x,y)\geq0,
\end{equation*}
and
\begin{equation*}
p_t(x,y)\cdot\mathbf{1}_{\{\|x-y\|\leq\mathcal{A}^{4}t\}}\leq K_{t}(x,y)\leq p_t(x,y)\cdot\mathbf{1}_{\{\|x-y\|\leq2\mathcal{A}^{4}t\}}/C_{4},
\end{equation*}
and if $\|x-z\|\leq (t+d(x,y))/2$, then 
\begin{equation*}
\left|K_{t}(x,y)-K_{t}(z,y)\right|\lesssim_{C_{1},C_{2},C_{3},C_{4}}\frac{1}{V(x,y, t+d(x,y))}\frac{t}{t+\|x-y\|}\left(\frac{\|x-z\|}{t+d(x,y)}+\frac{\|x-z\|}{\mathcal{A}^{4}t}\right).
\end{equation*}
Moreover,  
\begin{equation*}
\left\|\nabla_{x}K_{t}(x,y)\right\|\lesssim_{C_{1},C_{2},C_{4}}\frac{1}{V(x,y, t+d(x,y))}\frac{t}{t+\|x-y\|}\left(\frac{1}{t}+\frac{1}{t+d(x,y)}\right).
\end{equation*}

$\rm{(a4)}$ Let $x_{0}\in{\mathbb{R}^{N}}$, $t>0$ and $x\in{Q(x_{0},t)}$. Then there exists a constant $C_{5}\geq1$ depending only on $\mathbf{N}$ such that
\begin{equation*}
  \int_{Q(x_{0},t)}K_{t}(x,y)d\omega(y)\geq\frac{4}{9C_{5}}.
\end{equation*} 

\end{lemma}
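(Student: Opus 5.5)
The plan is to verify the four items in order; each rests on the definitions of $\chi_t^{\mathcal A}$ and $\phi_t^{\mathcal A}$ and on the kernel estimates \eqref{eq:p-symmetry}--\eqref{eq:poisson-Holder}.

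For (a1), I will use the mean value theorem for $\eta$ together with the reverse triangle inequality $\bigl|\|x-y\|-\|z-y\|\bigr|\le\|x-z\|$. This gives the bound $\|\eta'\|_\infty\|x-z\|/(\mathcal A^4t)$.

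For (a2), the upper bound $\phi\le1$ is immediate from $0\le\eta\le1$ and \eqref{eq:p-G-invariance}. For the lower bound I will discard everything outside the Euclidean ball $B(x,t)$ and use $\chi\equiv1$ there, since $\mathcal A>1$. On $B(x,t)$ one has $t+\|x-y\|\le 2t$, so the lower bound in \eqref{eq:poisson-upper} and doubling give
\[
p_t(x,y)\gtrsim \frac{1}{V(x,y,2t)}.
\]
Moreover $V(x,y,2t)\lesssim\omega(B(x,t))$, because $B(y,2t)\subset B(x,3t)$. Integrating over $B(x,t)$ then yields a constant $C_4=C_4(\mathbf N)$.

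For the Lipschitz bound on $\phi$, I split $\phi(x)-\phi(z)$ into two pieces:
\[
\int \bigl(p_t(x,y)-p_t(z,y)\bigr)\chi(x,y)\,d\omega(y)+\int p_t(z,y)\bigl(\chi(x,y)-\chi(z,y)\bigr)\,d\omega(y).
\]
The second piece is handled by (a1) and \eqref{eq:p-G-invariance}. The first piece is controlled by the mean value theorem and \eqref{eq:poisson-smooth}. Along the segment from $x$ to $z$ we have $\|\nabla p_t\|\lesssim p_t/t$, and the integral of $p_t$ over the support of $\chi$ is at most $1$. One also needs $p_t(w,y)$ for $w$ on that segment to be comparable to $p_t(x,y)$. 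When $\|x-z\|\le t$ this follows from \eqref{eq:poisson-smooth} via a Harnack-type argument (the logarithmic gradient of $p_t$ is at most $C/t$). When $\|x-z\|>t$ the bound is trivial, since $|\phi(x)-\phi(z)|\le1\le\mathcal A^4\|x-z\|/(\mathcal A^4 t)$ up to an $\mathcal A$-factor. Because the stated constant is $(1+C_2)/\mathcal A^4$, I expect to need care here and to bound the $\nabla p$ contribution by $C_2\|x-z\|/t$ in a way consistent with the stated bound. Matching the exact $\mathcal A^{-4}$ normalization on this piece is the main obstacle I anticipate.

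For (a3), the support, nonnegativity, and normalization follow directly from the definitions. The two-sided comparison follows from $\chi=1$ on $\{\|x-y\|\le\mathcal A^4t\}$, $\chi\le\mathbf 1_{\{\|x-y\|\le2\mathcal A^4t\}}$, and $C_4\le\phi\le1$. For the difference $K_t(x,y)-K_t(z,y)$, I write the quotient-rule decomposition
\[
\frac{(p_x-p_z)\chi_x}{\phi_x}+\frac{p_z(\chi_x-\chi_z)}{\phi_x}+p_z\chi_z\frac{\phi_z-\phi_x}{\phi_x\phi_z}.
\]
The first term is bounded using \eqref{eq:poisson-Holder} together with the symmetry \eqref{eq:p-symmetry}. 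The second and third terms use (a1), (a2), and the upper bound in \eqref{eq:poisson-upper} for $p_z$. Here the condition $\|x-z\|\le(t+d(x,y))/2$ makes $t+d(z,y)\sim t+d(x,y)$ and $t+\|z-y\|\sim t+\|x-y\|$, and $V(z,y,\cdot)\sim V(x,y,\cdot)$ by doubling. The gradient estimate follows in the same way by differentiating the quotient, using \eqref{eq:poisson-smooth}.

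For (a4), when $x\in Q(x_0,t)$ the set $Q(x_0,t)\cap B(x,t/2)$ contains a cube of side comparable to $t$ near $x$. Every $y$ in it satisfies $\|x-y\|\le\sqrt N t<\mathcal A^4t$, so $K_t\ge p_t$ there. By the lower bound in \eqref{eq:poisson-upper} we get $p_t(x,y)\gtrsim 1/\omega(B(x,t))$, and doubling gives that this subcube has measure $\gtrsim\omega(B(x,t))$. This produces a constant depending only on $\mathbf N$, which I then write in the form $4/(9C_5)$.
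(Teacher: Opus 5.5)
Most of your plan follows the paper's proof. This covers (a1), the two-sided bound $C_4\le\phi_t^{\mathcal A}\le1$, the support/normalization/comparison parts of (a3), the quotient-rule decomposition, the gradient bound, and (a4). Your observation $V(x,y,2t)\lesssim\omega(B(x,t))$ is in fact more careful than the paper's corresponding line.

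There is, however, a genuine gap in the Lipschitz estimate for $\phi_t^{\mathcal A}$ in (a2), which you flag but leave unresolved. In your splitting, the first piece $\int(p_t(x,y)-p_t(z,y))\chi_t^{\mathcal A}(x,y)\,d\omega(y)$ is estimated in absolute value over $\{\|x-y\|\le2\mathcal A^4t\}$. There $\int|p_t(x,y)-p_t(z,y)|\,d\omega(y)$ really is of order $\|x-z\|/t$, already for $\kappa=0$. So no amount of care gives better than $C_2\|x-z\|/t$, which misses the stated bound by a factor $\mathcal A^4$. Your trivial case $\|x-z\|>t$ has the same defect. The factor is not cosmetic. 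It enters the third term of your decomposition of $K_t(x,y)-K_t(z,y)$ in (a3), and later Case 1 of Lemma \ref{lem:psi-estimate}, where $\mathcal A^{-4}$ is needed to beat $|b^{(1)}|\le C_6\mathcal A$.

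The missing idea is to use the cancellation $\int p_t(x,y)\,d\omega(y)=\int p_t(z,y)\,d\omega(y)=1$ from \eqref{eq:p-G-invariance}, which an absolute-value bound throws away. Writing $\chi=1-(1-\chi)$ in your first piece gives
\begin{align*}
\phi_t^{\mathcal A}(x)-\phi_t^{\mathcal A}(z)&=-\int_{\mathbb R^N}\bigl(p_t(x,y)-p_t(z,y)\bigr)\bigl(1-\chi_t^{\mathcal A}(x,y)\bigr)\,d\omega(y)\\
&\quad+\int_{\mathbb R^N}p_t(z,y)\bigl(\chi_t^{\mathcal A}(x,y)-\chi_t^{\mathcal A}(z,y)\bigr)\,d\omega(y),
\end{align*}
so the first integral lives on $\{\|x-y\|\ge\mathcal A^4t\}$.

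Handle $\|x-z\|\ge\mathcal A^4t/2$ trivially, since then $|\phi_t^{\mathcal A}(x)-\phi_t^{\mathcal A}(z)|\le1\le2\|x-z\|/(\mathcal A^4t)$. If $\|x-z\|<\mathcal A^4t/2$, every $\xi_s=sx+(1-s)z$ satisfies $\|\xi_s-y\|\ge\mathcal A^4t/2$ on that region. Hence the factor $1/(t+\|\xi_s-y\|)$ in \eqref{eq:poisson-smooth} is at most $2/(\mathcal A^4t)$. The remaining integral $\int\omega(B(\xi_s,t+d(\xi_s,y)))^{-1}\,t/(t+d(\xi_s,y))\,d\omega(y)$ is $\lesssim1$ by the annular sum \eqref{eq:sequence-Dunkl}.

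This yields $C_2\|x-z\|/(\mathcal A^4t)$ with no Harnack comparison needed. The second integral is handled by (a1) exactly as you planned. With this bound in place, your (a3) difference estimate goes through as written.
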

\begin{proof}   
$\rm{(a1)}$ holds due to the smoothness of $\eta$. We now consider $\rm{(a2)}$. Firstly, the upper bound of $\phi_{t}^{\mathcal{A}}$ follows by \eqref{eq:p-G-invariance}. Using \eqref{eq:poisson-upper}, for any $x\in{\mathbb{R}^{N}}$,
\begin{align*}
  \int_{\mathbb{R}^{N}}p_{t}(x,y)\chi_{t}^{\mathcal{A}}(x,y)d\omega(y)
  &\geq C_{0}\int_{B(x,t)}\frac{1}{\omega(B(x, t+\|x-y\|))}\frac{t}{t+\|x-y\|}d\omega(y)\\
  &\geq C_{0}\int_{B(x,t)}\frac{1}{\omega(B(x, t+t))}\frac{t}{t+t}d\omega(y)\\
  &\gtrsim \frac{C_{0}}{2^{\mathbf{N}+1}}\\
  &=: C_{4}.
\end{align*}
 
Suppose first that $x,z\in{\mathbb{R}^{N}}$ and $t>0$ satisfy $\|x-z\|\geq t\mathcal{A}^{4}/2$. Then
\begin{eqnarray*} 
\left|\phi_{t}^{\mathcal{A}}(x)-\phi_{t}^{\mathcal{A}}(z)\right|\leq2\leq\frac{4\|x-z\|}{\mathcal{A}^{4}t}.
\end{eqnarray*}
It remains to consider the case in which $x,z\in{\mathbb{R}^{N}}$, $t>0$ and $\|x-z\|< t\mathcal{A}^{4}/2$. Write
\begin{align*}
  \left|\phi_{t}^{\mathcal{A}}(x)-\phi_{t}^{\mathcal{A}}(z)\right|
   &=\left|\phi_{t}^{\mathcal{A}}(x)-1+1-\phi_{t}^{\mathcal{A}}(z)\right|\\
   &\leq\int_{\mathbb{R}^{N}}\left|p_{t}(x,y)-p_{t}(z,y)\right|\left(1-\chi_{t}^{\mathcal{A}}(x,y)\right)d\omega(y)+\int_{\mathbb{R}^{N}}p_{t}(z,y)\left|\chi_{t}^{\mathcal{A}}(x,y)-\chi_{t}^{\mathcal{A}}(z,y)\right|d\omega(y)\\
   &=: I+II.
\end{align*}
Applying \eqref{eq:p-symmetry} and \eqref{eq:poisson-smooth}, 
\begin{align*}
  I
  &\leq C_{2}\int_{0}^{1}\int_{\|x-y\|\geq t\mathcal{A}^{4}}\frac{\|x-z\|}{\omega(B(\xi_{s},t+d(\xi_{s},y)))}\frac{1}{t+\|\xi_{s}-y\|}\frac{t}{t+d(\xi_{s},y)}\left(1-\chi_{t}^{\mathcal{A}}(x,y)\right)d\omega(y)ds\lesssim C_{2}\frac{\|x-z\|}{\mathcal{A}^{4}t},
\end{align*}
where $\xi_{s}=sx+(1-s)z$. In the last inequality, we apply the fact that 
\begin{align}\label{eq:sequence-Dunkl}
  &\int_{\mathbb{R}^{N}}\frac{1}{\omega(B(\xi_{s},t+d(\xi_{s},y)))}\frac{t}{t+d(\xi_{s},y)}d\omega(y)\\\notag
  &\quad\leq\left(\int_{d(\xi_{s},y)\leq t}+\sum\limits_{i=0}^{\infty}\int_{d(\xi_{s},y)\thicksim2^{i}t}\right)\frac{1}{\omega(B(\xi_{s},t+d(\xi_{s},y)))}\frac{t}{t+d(\xi_{s},y)}d\omega(y)\\\notag
  &\quad\lesssim1,\notag
\end{align}
where the implicit constant is independent of $s,t$. Moreover, by $\rm{(a1)}$,
\begin{align*}
  II&\lesssim\frac{\|x-z\|}{\mathcal{A}^{4}t}\int_{\mathbb{R}^{N}}p_{t}(z,y)d\omega(y)\lesssim\frac{\|x-z\|}{\mathcal{A}^{4}t}.
\end{align*}

Consider $\rm{(a3)}$. Let  $x,y,z\in{\mathbb{R}^{N}}$ and $t>0$ satisfy $\|x-z\|\leq (t+d(x,y))/2$. By $\rm{(a2)}$, \eqref{eq:poisson-upper} and \eqref{eq:poisson-Holder},   
\begin{align*}
  \left|K_{t}(x,y)-K_{t}(z,y)\right|
  &\leq\chi_{t}^{\mathcal{A}}(x,y)\left|\frac{p_{t}(x,y)-p_{t}(z,y)}  {\phi_{t}^{\mathcal{A}}(x)}\right|+p_{t}(z,y)\left|\frac{\chi_{t}^{\mathcal{A}}(x,y)-\chi_{t}^{\mathcal{A}}(z,y)}  {\phi_{t}^{\mathcal{A}}(x)}\right|\\
  &\quad+p_{t}(z,y)\chi_{t}^{\mathcal{A}}(z,y)\left|\frac{1}{\phi_{t}^{\mathcal{A}}(x)}-\frac{1}{\phi_{t}^{\mathcal{A}}(z)}\right|\\
  &\lesssim\left(\frac{C_{3}}{C_{4}}+\frac{C_{1}}{C_{4}}+\frac{C_{1}(1+C_{2})}{C_{4}^{2}}\right)\frac{1}{V(x,y, t+d(x,y))}\frac{t}{t+\|x-y\|}\left(\frac{\|x-z\|}{t+d(x,y)}+\frac{\|x-z\|}{\mathcal{A}^{4}t}\right). 
\end{align*}
 
Let $x,y\in{\mathbb{R}^{N}}$, $t>0$ and $i\in\{1,2,\ldots,N\}$. We have
\begin{equation*}
\left|\partial_{x_{i}}K_{t}(x,y)\right|\leq\frac{\left|\partial_{x_{i}}[p_{t}(x,y)\chi_{t}^{\mathcal{A}}(x,y)]\cdot\phi_{t}^{\mathcal{A}}(x)\right|}{\left(\phi_{t}^{\mathcal{A}}(x)\right)^{2}}
+\frac{\left|[p_{t}(x,y)\chi_{t}^{\mathcal{A}}(x,y)]\cdot\partial_{x_{i}}\phi_{t}^{\mathcal{A}}(x)\right|}{\left(\phi_{t}^{\mathcal{A}}(x)\right)^{2}}.
\end{equation*}
From \eqref{eq:poisson-smooth}, 
\begin{align*}
\left|\partial_{x_{i}}[p_{t}(x,y)\chi_{t}^{\mathcal{A}}(x,y)]\cdot\phi_{t}^{\mathcal{A}}(x)\right|
&\leq\left|\partial_{x_{i}}[p_{t}(x,y)\chi_{t}^{\mathcal{A}}(x,y)]\right|\\
&\lesssim\left(C_{1}+C_{2}\right)\frac{1}{\omega(B(x,t+d(x,y)))}\frac{t}{t+\|x-y\|}\left(\frac{1}{t+d(x,y)}+\frac{1}{\mathcal{A}^{4}t}\right).
\end{align*}
It then follows from \eqref{eq:sequence-Dunkl} and $d(x,y)\leq\|x-y\|$ that 
\begin{equation*}
\int_{\mathbb{R}^{N}}\left|t\partial_{x_{i}}[p_{t}(x,y)\chi_{t}^{\mathcal{A}}(x,y)]\right|d\omega(y)\lesssim\int_{\mathbb{R}^{N}}\frac{1}{\omega(B(x,t+d(x,y)))}\frac{t}{t+\|x-y\|}d\omega(y)\lesssim1.
\end{equation*}
Then by the dominated convergence theorem, 
\begin{equation*}
\partial_{x_{i}}\phi_{t}^{\mathcal{A}}(x)=\int_{\mathbb{R}^{N}}\partial_{x_{i}}[p_{t}(x,y)\chi_{t}^{\mathcal{A}}(x,y)]d\omega(y).
\end{equation*}
Combining these facts, we obtain
\begin{equation*}
\left\|\nabla_{x}K_{t}(x,y)\right\|\lesssim\left(\frac{C_{1}}{C_{4}}+\frac{C_{2}}{C_{4}^{2}}\right)\frac{1}{V(x,y, t+d(x,y))}\frac{t}{t+\|x-y\|}\left(\frac{1}{t}+\frac{1}{t+d(x,y)}\right).
\end{equation*}

We now prove $\rm{(a4)}$. Since $\mathcal{A}>\sqrt{N}$, by $\rm{(a3)}$ and \eqref{eq:poisson-upper}, there exists a constant $M>0$ such that
\begin{align*}
\int_{Q(x_{0},t)}K_{t}(x,y)d\omega(y)
&\geq\int_{Q(x_{0},t)}p_{t}(x,y)d\omega(y)\\
&\geq C_{0}\int_{B(x_{0},t/2)}\frac{t}{\omega(B(x,t+\|x-y\|))}\frac{1}{t+\norm{x-y}}d\omega(y)\\
&\gtrsim\frac{C_{0}}{3^{\mathbf{N}+1}}\geq\frac{4}{9M}.
\end{align*}
If $M<1$, set $C_{5}:=M+1$. Otherwise, set $C_{5}:=M$. Therefore, the proof of this lemma is complete.
\end{proof}

\subsection{Proof of Theorem \ref{thm-BMO-onesetp}}\label{sub:a decomposition}
\hskip\parindent

In this subsection, we give the proof of Theorem \ref{thm-BMO-onesetp}. We begin with some basic properties of $\mathrm{BMO}(\mathbb{R}^{N}, d\omega)$ that will be used below.
\begin{lemma}\label{lem-bmo-properties}
Let $f\in{\mathrm{BMO}(\mathbb{R}^{N}, d\omega)}$. Then for any $x,y\in{\mathbb{R}^{N}}$, $r,r_{1}>0$ with $r<r_{1}$, we have that

$\rm{(b1)}$ 
\begin{equation*}
  \left|\mathcal{M}_{B(x,r)}(f)-\mathcal{M}_{B(x,r_{1})}(f)\right|\lesssim\left(1+\log_{2}(r_{1}/r)\right)\|f\|_{\ast,\kappa};
\end{equation*}

$\rm{(b2)}$ 
\begin{eqnarray*} 
\left|\mathcal{M}_{B(x,r)}(f)-\mathcal{M}_{B(y,r)}(f)\right|\lesssim
\left\{\begin{array}{ll}
\|f\|_{\ast,\kappa},\,\,\, &\|x-y\|\leq2r;\\
\,\\
\log_{2}(\|x-y\|/r)\|f\|_{\ast,\kappa},\,\,\, &\|x-y\|\geq2r;
\end{array}\right.
\end{eqnarray*}

$\rm{(b3)}$ there exist constants $c_{0},c_{1}>0$ independent of $f$ such that for all balls $B\subset\mathbb{R}^{N}$,
\begin{equation*}
  \omega\left(\{y\in{B}: |f(y)- \mathcal{M}_{B}(f)|>t\}\right)\leq c_{0}e^{-c_{1}t/\|f\|_{\ast,\kappa}}\omega(B),\qquad t>0.
\end{equation*}
\end{lemma}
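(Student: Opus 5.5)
We establish the three properties (b1)--(b3) in order, using only two structural facts: the doubling property of $\omega$ recorded in Subsection~\ref{sec:dunkl-theory}, and the fact that $(\mathbb{R}^N,\|\cdot\|,\omega)$ is a space of homogeneous type. We normalize $\|f\|_{\ast,\kappa}=1$ by homogeneity.

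For (b1), first treat $r_1\le 2r$. Since $B(x,r)\subset B(x,r_1)$ and $\omega(B(x,r_1))\le C\,\omega(B(x,r))$ by doubling, we get
\begin{equation*}
\left|\mathcal{M}_{B(x,r)}(f)-\mathcal{M}_{B(x,r_1)}(f)\right|\le \frac{1}{\omega(B(x,r))}\int_{B(x,r)}\left|f-\mathcal{M}_{B(x,r_1)}(f)\right|d\omega\le C.
\end{equation*}
In general, choose $k\in\mathbb{N}\cup\{0\}$ with $2^{k}r\le r_1<2^{k+1}r$. Telescope along the radii $r,2r,\dots,2^kr,r_1$: consecutive radii have ratio at most $2$, so each step costs at most $C$. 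This gives the bound $C(k+1)\lesssim 1+\log_2(r_1/r)$.

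For (b2), suppose first that $\|x-y\|\le 2r$. Then $B(x,r)\cup B(y,r)\subset B(x,3r)$, and $\omega(B(x,3r))\lesssim\omega(B(x,r))$. Also $B(x,3r)\subset B(y,5r)$, so $\omega(B(x,3r))\lesssim\omega(B(y,r))$. The argument of the first step of (b1) then bounds both $|\mathcal{M}_{B(x,r)}(f)-\mathcal{M}_{B(x,3r)}(f)|$ and $|\mathcal{M}_{B(y,r)}(f)-\mathcal{M}_{B(x,3r)}(f)|$ by a constant. Now suppose $\|x-y\|\ge 2r$ and set $R=2\|x-y\|$. By (b1),
\begin{equation*}
\left|\mathcal{M}_{B(x,r)}(f)-\mathcal{M}_{B(x,R)}(f)\right|\lesssim 1+\log_2(R/r),
\end{equation*}
and the same holds with $x$ replaced by $y$. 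The balls $B(x,R)$ and $B(y,R)$ have centers at distance $\le 2R$, so the first case applies to them with a constant bound. Since $\|x-y\|/r\ge2$, we have $1+\log_2(R/r)\lesssim\log_2(\|x-y\|/r)$, which gives the claimed estimate.

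For (b3), the John--Nirenberg inequality, we run the standard Calder\'on--Zygmund stopping-time argument. It adapts to doubling measures via a dyadic system of Christ cubes, or via a Vitali-type covering of the level sets of the local maximal function on a dilate of $B$. Fix a ball $B$ and a large $\lambda$, and set $F(B,t):=\omega(\{y\in B:|f(y)-\mathcal{M}_B(f)|>t\})/\omega(B)$. We select maximal subballs or cubes $\{B_j\}$ on which the mean of $|f-\mathcal{M}_B(f)|$ exceeds $\lambda$. These satisfy $\sum_j\omega(B_j)\le \omega(B)/\lambda$, $|f-\mathcal{M}_B(f)|\le\lambda$ a.e. off $\bigcup_j B_j$, and, by doubling, $|\mathcal{M}_{B_j}(f)-\mathcal{M}_B(f)|\le C\lambda$. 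Iterating this selection gives the recursion $F(t+C\lambda)\le \lambda^{-1}F(t)$, which yields exponential decay. The main obstacle is precisely this step: the classical proof uses dyadic cubes adapted to Lebesgue measure, so one must either invoke Christ's dyadic cubes for the doubling measure $\omega$ or use a covering lemma. Rather than redo the argument, we will cite the John--Nirenberg inequality on spaces of homogeneous type (Coifman--Weiss).
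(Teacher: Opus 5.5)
Your proposal is correct. The paper gives no argument of its own for this lemma and simply cites the literature for (b1)--(b3). You instead prove (b1) and (b2) directly from the doubling property of $\omega$. For (b1) you compare nested balls of comparable measure and telescope over dyadic radii. For (b2) you pass through $B(x,3r)$ in the near case, and through $B(x,2\|x-y\|)$, $B(y,2\|x-y\|)$ in the far case. Only (b3) is deferred to the John--Nirenberg inequality on spaces of homogeneous type, which is exactly what the cited references supply.

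Your route makes transparent that (b1)--(b2) use nothing about the Dunkl structure beyond doubling of $\omega$ with respect to Euclidean balls. The constants therefore depend only on the doubling constant, i.e.\ on $\mathbf N$.

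One small remark on your sketch of (b3), harmless since you cite the result rather than rely on the sketch. A Vitali-type selection of balls only gives $\sum_j\omega(B_j)\le C\,\omega(B)/\lambda$ rather than $\omega(B)/\lambda$, and the selected balls need not lie inside $B$. Also, the recursion $F(t+C\lambda)\le C\lambda^{-1}F(t)$ has to be run for $F(t):=\sup_B F(B,t)$, not for a fixed ball. The supremum can be taken over balls, or over Christ cubes with the equivalent cube-based BMO norm. This is why the cube-based version is the cleaner one to quote.
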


\begin{proof} The results \rm{(b1)}-\rm{(b3)} were obtained in \cite{CS1977,GuoLiWick2026,JiuLi2023}. 
\end{proof}
 
To improve the readability and organization of the lengthy proof of Theorem \ref{thm-BMO-onesetp}, we present several key steps as a proposition and some lemmas.  
\begin{proof}[Proof of Theorem \ref{thm-BMO-onesetp}]
Since $f$ has compact support in $\mathbb{R}^{N}$, there exists $m_{0}\in{\mathbb{N}}$ such that $\supp f\subset B(0,2^{m_{0}-1})$.
Without loss of generality, we assume that $\int_{B(0,2^{m_{0}-1})}f(y)d\omega(y)=0$. In fact, let us define
\begin{equation*}
 \xi(x):=\frac{\omega(B(0,2^{m_{0}-1}))}{\omega(B(0,2^{m_{0}-2}))}\chi_{B(0,2^{m_{0}-2})}(x),\qquad x\in{\mathbb{R}^{N}},
\end{equation*}
where $\chi_{B(0,2^{m_{0}-2})}$ denotes the indicator of $B(0,2^{m_{0}-2})$.
Then
\begin{equation*}
\supp \xi\subset B(0,2^{m_{0}-1}), \quad \int_{\mathbb{R}^{N}}\xi(y)d\omega(y)=\omega(B(0,2^{m_{0}-1})) \quad \mathrm{and} \quad \|\xi\|_{L^{\infty}(\mathbb{R}^{N}, d\omega)}\leq2^{\mathbf{N}}.
\end{equation*}
Set $c:=\mathcal{M}_{B(0,2^{m_{0}-1})}(f)$. Then
\begin{equation*}
\int_{B(0,2^{m_{0}-1})}(f(y)-c\xi(y))d\omega(y)=0
\end{equation*}
and 
\begin{equation*} 
\supp (f-c\xi)\subset B(0,2^{m_{0}-1}),\qquad \|f-c\xi\|_{\ast,\kappa}+\|c\xi\|_{L^{\infty}(\mathbb{R}^{N}, d\omega)}\lesssim\|f\|_{\ast,\kappa}.
\end{equation*}
Then we replace $f$ with $f-c\xi$. The term $c\xi$ can be absorbed into the bounded function $g_{0}$ at the end of our proof.

Let $\mathcal{D}$ denote the system of dyadic cubes in $Q(0,2^{m_{0}})$, where
\begin{equation*}
  Q(0,2^{m_{0}})=\left\{y\in{\mathbb{R}^{N}}: \, -2^{m_{0}-1}\leq y_{j}<2^{m_{0}-1},\,\, j=1,2,\ldots,N\right\}.
\end{equation*}
Let $\mathcal{A}$ be a large positive integer which will be determined at the end of the proof. 
We next consider successively the special dyadic cubes of side length
\begin{equation*}
  \ell_{n}:=2^{m_{0}-n\mathcal{A}},\qquad n\in{\mathbb{N}}.
\end{equation*}
Let $\ell_{0}:=2^{m_{0}}$. For every $n\in{\mathbb{N}}$, we denote 
\begin{equation*}
  \mathcal{D}_{n}:=\{Q\in{\mathcal{D}}:\,\, \ell(Q)=\ell_{n}\}.
\end{equation*}
Let $K_{t}(x,y)$ be as in \eqref{eq:approximating-kernel}, and let $C_{5}$ be as in $\rm{(a4)}$ of Lemma \ref{lem-kernel-properties}. We provide a key proposition as follows.
\begin{proposition}\label{pro:function-bn}
There exists a sequence of functions $\{b_{n}\}_{n\in{\mathbb{N}}}$ having the following basic properties:

$\rm(c1)$ For every $n\in{\mathbb{N}}$,  
\begin{equation*}
  |b_{n}(y)|\leq C_{6}\mathcal{A},\qquad y\in{\mathbb{R}^{N}},
\end{equation*}
where $C_{6}>0$ is a constant independent of $n$ and $\mathcal{A}$.

$\rm(c2)$ The functions $b_{n}$ have disjoint supports.

For every $n\in{\mathbb{N}}$, let us denote
\begin{equation*}
  U_{n}(x):=\int_{\mathbb{R}^{N}}K_{\ell_{n}}(x,y)b_{n}(y)d\omega(y),\qquad x\in{\mathbb{R}^{N}},
\end{equation*}
and
\begin{equation*}
  f_{n}:=f-\sum\limits_{i=1}^{n-1}U_{i},\qquad f_{1}:=f.
\end{equation*}
Let $n\in{\mathbb{N}}$. Then:

$\rm(c3)$ $|\mathcal{M}_{Q}(f_{n})|\leq\mathcal{A}$, for every dyadic cube $Q\subset\mathbb{R}^{N}$ with $\ell(Q)\geq\ell_{n-1}$.

$\rm(c4)$ For every dyadic cube $Q\subset\mathbb{R}^{N}$ with $\ell(Q)=\ell_{n}$, if
\begin{equation*}
|\mathcal{M}_{Q}(f_{n})|\leq\frac{9C_{5}+8}{20C_{5}}\mathcal{A},
\end{equation*}
then $U_{n}=0$ and $b_{n}=0$ on $Q$.

$\rm(c5)$ If $b_{n}\neq0$ on $Q\in{\mathcal{D}_{n}}$, then 
\begin{equation*}
|\mathcal{M}_{Q}(f_{n+1})|\leq\frac{9C_{5}+7}{20C_{5}}\mathcal{A}.
\end{equation*}
\end{proposition}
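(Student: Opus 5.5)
We construct the $b_n$ by induction on $n$, following the stopping-time scheme of Carleson and Uchiyama, with the auxiliary kernel $K_t$ replacing the Poisson kernel. Suppose $b_1,\dots,b_{n-1}$ have been built and satisfy (c1)--(c5) at earlier levels, and that (c3) holds for $f_n$.

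\textbf{Selection of cubes at level $n$.} We call $Q\in\mathcal{D}_n$ \emph{selected} if
\[
|\mathcal{M}_Q(f_n)|>\frac{9C_5+8}{20C_5}\mathcal{A}.
\]
On each selected $Q$ we look for a set $E_Q\subset Q$ and define
\[
b_n:=\lambda_Q\,\mathrm{sgn}(\mathcal{M}_Q(f_n))\,\mathbf 1_{E_Q}\quad\text{on } Q,
\]
with $b_n:=0$ off the selected cubes. The sets $E_Q$ are chosen inside the region $Q\setminus\bigcup_{i<n}\supp b_i$; this gives (c2).

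\textbf{Why that region is large.} Let $Q'\in\mathcal{D}_{i}$, $i<n$, be an earlier cube on which $b_i\ne0$. By (c5) we have $|\mathcal{M}_{Q'}(f_{i+1})|\le\frac{9C_5+7}{20C_5}\mathcal{A}$. The later corrections $U_{i+1},\dots,U_{n-1}$ are averages of $b_j$ at much smaller scales $\ell_j\ll\ell_i$. Using the support property of (a3), each is essentially mean-preserving on $Q'$, up to a boundary layer of relative size $\lesssim 2^{-\mathcal{A}}\mathcal{A}^4$.

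Now take a selected $Q\subset Q'$. Its mean exceeds the earlier bound by a gap of order $\mathcal{A}/(20C_5)$. By (b1) and the exponential decay (b3), the part of $Q$ already occupied by earlier supports has proportion at most $e^{-c\mathcal{A}}$ or so. The key claim to establish is that at most a fraction $\le 1/9$ of $Q$ carries earlier $b_i$'s.

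\textbf{Size of $\lambda_Q$ and the choice of $E_Q$.} By (c3) applied to the parent cube, together with (b1), we have $|\mathcal{M}_Q(f_n)|\le \mathcal{A}+C\mathcal{A}\|f\|_{\ast,\kappa}\lesssim \mathcal{A}$. This lets us take $\lambda_Q\le C_6\mathcal{A}$, which gives (c1). For $E_Q$ we take a concentric subcube (minus the earlier supports) of proportion comparable to $4/9$, in the spirit of (a4).

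\textbf{Verifying (c4) and (c5).} For (c4): since $\supp K_{\ell_n}(x,\cdot)\subset B(x,2\mathcal{A}^4\ell_n)$, we have $U_n=0$ on $Q$ provided $b_n$ vanishes on a neighborhood of $Q$. To arrange this, we shrink each $E_Q$ to stay at distance $2\mathcal{A}^4\ell_n$ from $\partial Q$, and we add the unselected cubes adjacent to selected ones to the error; this is harmless because $\mathcal{A}^4\ell_n\ll\ell_{n-1}$ only after fixing the geometry carefully.

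For (c5): $\mathcal{M}_Q(U_n)=\lambda_Q\,\mathrm{sgn}\cdot\mathcal{M}_Q\bigl(\int K_{\ell_n}(\cdot,y)\mathbf 1_{E_Q}(y)\,d\omega(y)\bigr)$, and by (a3)--(a4) the inner average lies in $[4/(9C_5),1]$. We tune $\lambda_Q$, via the intermediate value theorem in the size of $E_Q$, so that $\mathcal{M}_Q(U_n)$ equals $\mathcal{M}_Q(f_n)-\mathrm{sgn}\cdot\frac{9C_5+7}{20C_5}\mathcal{A}$. The window between $\frac{9C_5+8}{20C_5}$ and $\mathcal{A}$ times $O(1)$ is compatible with the lower bound $4/(9C_5)$; this is exactly why those constants appear.

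\textbf{Propagating (c3) to $f_{n+1}$.} For cubes with $\ell(Q)\ge\ell_n$, $\mathcal{M}_Q(f_{n+1})=\mathcal{M}_Q(f_n)-\mathcal{M}_Q(U_n)$. For a cube in $\mathcal{D}_n$ this is handled by (c4)/(c5). A larger cube is a union of cubes in $\mathcal{D}_n$, and we write its average as the average of these, using boundary leakage of $U_n$ of size $\mathcal{A}^4\ell_n/\ell(Q)$ times $\mathcal{A}$; this stays $\le\mathcal{A}$ once $\mathcal{A}$ is large.

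\textbf{Main obstacle.} The hard step is showing that the earlier supports occupy only a small portion of a newly selected cube, so that enough room remains for $E_Q$ and (c2) holds. Here the Dunkl measure is not translation-invariant, so the argument must rely on the doubling property and (b3) rather than on exact volume counting.
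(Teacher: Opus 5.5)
\textbf{There is a genuine gap.} Your construction does not deliver (c4), (c5) or (c2) as stated. The first problem is the scale of the kernel. Since $\supp K_{\ell_n}(x,\cdot)\subset B(x,2\mathcal{A}^4\ell_n)$ and cubes in $\mathcal{D}_n$ have side $\ell_n$, the averaging radius is about $\mathcal{A}^4$ times the cube side.

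\textbf{The kernel scale breaks (c4), (c5) and (c3).}
\begin{itemize}
\item You cannot shrink $E_Q\subset Q$ to stay $2\mathcal{A}^4\ell_n$ away from $\partial Q$. The comparison $\mathcal{A}^4\ell_n\ll\ell_{n-1}$ does not help, because (c4) concerns cubes of side $\ell_n$.
\item With your single threshold $\frac{9C_5+8}{20C_5}\mathcal{A}$, an unselected cube adjacent to a selected one carries $U_n\neq0$. So (c4), which is an exact vanishing statement, fails there.
\item For the same reason, $\mathcal{M}_Q(U_n)$ depends on $b_n$ on all $\sim\mathcal{A}^{4N}$ cubes within distance $2\mathcal{A}^4\ell_n$. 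Tuning $\lambda_Q$ cube by cube via the intermediate value theorem therefore does not decouple, and (c5) is not obtained.
\item Your leakage bound $\mathcal{A}^4\ell_n/\ell(Q)$ for (c3) is not small when $\ell_n\le\ell(Q)\lesssim\mathcal{A}^4\ell_n$.
\end{itemize}

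The ingredient you are missing is the slow-variation estimate \eqref{eq:estimate-fn}. At scale $\ell_n$, $\mathcal{M}_Q(f_n)$ changes by at most $C_7(1+\alpha\mathcal{A})<\mathcal{A}/(20C_5)$ over distances up to $2^{\alpha\mathcal{A}}\ell_n\ge2\mathcal{A}^4\ell_n$. This follows from (b2) for $f$ and from the Lipschitz bound \eqref{eq:estimate-Ui} for the earlier $U_i$.

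The paper uses it as follows.
\begin{itemize}
\item It selects at the higher level $\frac{9C_5+10}{20C_5}\mathcal{A}$.
\item It extends through maximal connected families at level $\frac{9C_5+9}{20C_5}\mathcal{A}$ to obtain $\Sigma_n$.
\item It sets $\beta_n=\mathcal{M}_{Q'}(f_n)$ on each $Q'\subset\Sigma_n$.
\end{itemize}
The gaps between the three thresholds, combined with \eqref{eq:estimate-fn}, have three consequences:
\begin{itemize}
\item $\Sigma_n$ lies at distance $>2\mathcal{A}^4\ell_n$ from every cube whose mean is below $\frac{9C_5+8}{20C_5}\mathcal{A}$, which gives (c4) exactly.
\item $\Sigma_n^+$ is separated from $\Sigma_n^-$.
\item $\beta_n$ is nearly constant on each kernel support, so $V_n\approx\mathcal{M}_Q(f_n)$ with no cross-talk (Lemmas \ref{lem:lowbounded-kernel} and \ref{lem:cube-estimate}).
\end{itemize}
The intermediate cubes in (c3) are then handled by the same-sign bound $0\le V_n\le\mathcal{M}_Q(f_n)+C_7(1+\alpha\mathcal{A})$.

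\textbf{The key claim for (c2) is unproved and false for your construction.} Suppose $E_{Q'}$ is a solid subcube of proportion about $4/9$ of an earlier cube $Q'$. A later selected cube $Q\subset Q'$, of side $\ell_n\ll\ell(Q')$, can then lie entirely inside $E_{Q'}$, leaving no room for $E_Q$. John--Nirenberg (b3) bounds the total measure of subcubes where the mean of $f$ deviates strongly. It says nothing about how much such cubes overlap $E_{Q'}$.

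The paper runs the logic in the opposite direction.
\begin{itemize}
\item It removes from $\beta_n$ the maximal dyadic subcubes $\mathcal{G}(Q)$ on which $|\mathcal{M}_{\tilde I}(f)-\mathcal{M}_Q(f)|\ge\mathcal{A}/(30C_5)$.
\item These have relative measure $\lesssim e^{-c_1\mathcal{A}/(60C_5)}$, so $|U_n-V_n|$ is negligible (Lemma \ref{lem:U-V-estimate}).
\item Along any chain of subcubes where $b_n\neq0$, the means of $f$ stay within $\mathcal{A}/(30C_5)$ of $\mathcal{M}_Q(f)$.
\item Together with \eqref{eq:main-estimate1}, this gives $|\mathcal{M}_{\tilde I_1}(f_{n+1})|<\frac{9C_5+8}{20C_5}\mathcal{A}$.
\item Applying (c4) at levels $n+1,\dots,n+j$ then forces $b_{n+i}=0$ on $\supp b_n$.
\end{itemize}
Disjointness is thus a consequence of (c4) plus excising the exceptional set from the current support, not of later supports avoiding earlier ones.
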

\begin{proof}
Let $n\in{\mathbb{N}}$. Let us now suppose that $b_{1}, b_{2},\ldots,b _{n-1}$ have been constructed satisfying $\rm(c1)-\rm(c5)$. Before constructing the function $b_{n}$, we first establish some useful estimates in the following. Let $Q(x, \ell_{n}),Q(z, \ell_{n})$ be dyadic cubes, where $x,z\in{\mathbb{R}^{N}}$. We show that  
\begin{eqnarray}\label{eq:estimate-fn}
\left|\mathcal{M}_{Q(x, \ell_{n})}(f_{n})-\mathcal{M}_{Q(z, \ell_{n})}(f_{n})\right|
&\lesssim
\left\{\begin{array}{ll}
1,\,\, &\|x-z\|<\left(1+\sqrt{N}\right)\ell_{n};\\
\,\\
1+\alpha\mathcal{A},\,\, &\left(2^{\alpha\mathcal{A}}+\sqrt{N}\right)\ell_{n}:=\|x-z\|\geq\left(1+\sqrt{N}\right)\ell_{n},
\end{array}\right.
\end{eqnarray}
if $0<\alpha\leq1/2$ satisfies $\mathcal{A}\geq1/\alpha$. Here the implicit constant denoted by $C_{7}$ depends only on $\mathbf{N}$ and is independent of $\mathcal{A}$. Choose $\alpha$ sufficiently small such that $\alpha C_{7}<1/40C_{5}$, and then take $\mathcal{A}\geq\max\{40C_{5}C_{7},1/\alpha\}$ sufficiently large so that $2^{\alpha\mathcal{A}}/2\geq\mathcal{A}^{4}$. Then 
\begin{equation}\label{eq:C7-C5-A}
C_{7}(1+\alpha\mathcal{A})<\frac{\mathcal{A}}{20C_{5}}.
\end{equation}
  
Indeed, we write
\begin{align*}
  \left|\mathcal{M}_{Q(x, \ell_{n})}(f_{n})-\mathcal{M}_{Q(z, \ell_{n})}(f_{n})\right|
  &\leq\left|\mathcal{M}_{Q(x, \ell_{n})}(f)-\mathcal{M}_{Q(z, \ell_{n})}(f)\right|+\left|\mathcal{M}_{Q(x, \ell_{n})}\left(\sum\limits_{i=1}^{n-1}U_{i}\right)-\mathcal{M}_{Q(z, \ell_{n})}\left(\sum\limits_{i=1}^{n-1}U_{i}\right)\right|\\
  &=: \mathcal{I}_{1}+\mathcal{I}_{2}.
\end{align*}
To estimate $\mathcal{I}_{1}$, by $\rm(b2)$ in Lemma \ref{lem-bmo-properties}, we have 
\begin{eqnarray*} 
\mathcal{I}_{1}\lesssim
\left\{\begin{array}{ll}
\|f\|_{\ast,\kappa},\,\,\, &\|x-z\|\leq(1+\sqrt{N})\ell_{n};\\
\,\\
\log_{2}(\|x-z\|/\ell_{n})\|f\|_{\ast,\kappa},\,\,\, &\|x-z\|\geq(1+\sqrt{N})\ell_{n}.
\end{array}\right.
\end{eqnarray*}

Consider $\mathcal{I}_{2}$. Let $i\in\{1,2,\ldots,n-1\}$. By $\rm(a3)$ in Lemma \ref{lem-kernel-properties}, $\rm(c1)$ and the definition of $U_{i}$, we have that for any $x,z\in{\mathbb{R}^{N}}$,
\begin{align}\label{eq:estimate-Ui}
  \left|U_{i}(x)-U_{i}(z)\right|
  &\lesssim\sum_{j=1}^{N}\int_{0}^{1}\int_{\mathbb{R}^{N}}\|x-z\|\left|\partial_{(\xi_{s})_{j}}K_{\ell_{i}}(\xi_{s},y)\right||b_{i}(y)|d\omega(y)ds\\\notag
  &\lesssim_{C_{1},C_{2},C_{4},C_{6}}\mathcal{A}\ell_{i}^{-1}\int_{0}^{1}\int_{\mathbb{R}^{N}}\frac{\|x-z\|}{\omega(B(\xi_{s}, \ell_{i}+d(\xi_{s},y)))}\frac{\ell_{i}}{\ell_{i}+\|\xi_{s}-y\|}d\omega(y)ds\\\notag
  &\lesssim_{C_{1},C_{2},C_{4},C_{6}} \mathcal{A}\ell_{i}^{-1}\|x-z\|,\notag
\end{align}
where $\xi_{s}=sx+(1-s)z$. Then
\begin{align*}
\left|\sum\limits_{i=1}^{n-1}(U_{i}(z)-U_{i}(x))\right|
&\lesssim_{C_{1},C_{2},C_{4},C_{6}}\sum\limits_{i=1}^{n-1}\mathcal{A}\ell_{i}^{-1}\|x-z\|\lesssim_{C_{1},C_{2},C_{4},C_{6}}\mathcal{A}\ell_{n-1}^{-1}\|x-z\|\\
&\lesssim_{C_{1},C_{2},C_{4},C_{6}}\mathcal{A}2^{-\mathcal{A}}\ell_{n}^{-1}\|x-z\|.
\end{align*}
It follows from this estimate that
\begin{align*}
  \mathcal{I}_{2}
  &\leq\left|\mathcal{M}_{Q(x, \ell_{n})}\left(\sum\limits_{i=1}^{n-1}U_{i}\right)-\sum\limits_{i=1}^{n-1}U_{i}(z)\right|+\left|\sum\limits_{i=1}^{n-1}U_{i}(z)-\sum\limits_{i=1}^{n-1}U_{i}(x)\right|+\left|\mathcal{M}_{Q(z, \ell_{n})}\left(\sum\limits_{i=1}^{n-1}U_{i}\right)-\sum\limits_{i=1}^{n-1}U_{i}(x)\right|\\
  &\lesssim_{C_{1},C_{2},C_{4},C_{6}}\mathcal{A}2^{-\mathcal{A}}\ell_{n}^{-1}\|x-z\|.
\end{align*}
Combining the estimates of $\mathcal{I}_{1}$ and $\mathcal{I}_{2}$, we see that \eqref{eq:estimate-fn} holds. It means that on the scale $\ell_{n}$, the mean value changes slowly even for the distance $\|x-z\|=\left(2^{\alpha\mathcal{A}}+\sqrt{N}\right)\ell_{n}$.

Now to construct the function $b_{n}$, we split the argument into the following four steps. 

{\bf{Step I:}} An auxiliary function $\beta_{n}$.  Consider the following set 
\begin{equation*}
  \Omega_{n}:=\left\{Q\in{\mathcal{D}_{n}}:\,\, |\mathcal{M}_{Q}(f_{n})|\geq\frac{9C_{5}+10}{20C_{5}}\mathcal{A}\right\}.
\end{equation*}
If $\Omega_{n}=\emptyset$, we define $b_{n}(y):=0$, $y\in{\mathbb{R}^{N}}$. It is obvious that $\rm(c1)-\rm(c5)$ hold for the index $n$.

Suppose first that $\Omega_{n}\neq\emptyset$. Then at least one of the following two sets is nonempty:
\begin{equation*}
  \Omega^{+}_{n}:=\left\{Q\in{\Omega_{n}}:\,\mathcal{M}_{Q}(f_{n})\geq\frac{9C_{5}+10}{20C_{5}}\mathcal{A}\right\}\quad \mathrm{and} \quad \Omega^{-}_{n}:=\left\{Q\in{\Omega_{n}}:\,\mathcal{M}_{Q}(f_{n})\leq-\frac{9C_{5}+10}{20C_{5}}\mathcal{A}\right\}.
\end{equation*}
We first consider the set $\Omega^{+}_{n}$. Take a cube $Q\in{\Omega^{+}_{n}}$. Choose $Q'\in{\mathcal{D}}$ with $\ell(Q')=\ell_{n-1}$ such that $Q\subset Q'$. By \eqref{eq:estimate-Ui}, $\rm(c3)$ and Lemma \ref{lem-bmo-properties} $\rm(b1)$, we have 
\begin{equation*} 
\mathcal{M}_{Q}(f_{n})\leq|\mathcal{M}_{Q}(f_{n})-\mathcal{M}_{Q'}(f_{n})|+|\mathcal{M}_{Q'}(f_{n})|\lesssim\mathcal{A}.
\end{equation*}
This together with the definition of $\Omega^{+}_{n}$ implies that $\mathcal{M}_{Q}(f_{n})\thicksim \mathcal{A}$, for every $Q\in{\Omega^{+}_{n}}$. 

For each $Q\in{\Omega^{+}_{n}}$, let $\Lambda^{+}(Q)$ be the union of the cubes in the maximal connected subfamily of $\mathcal{D}_{n}$ containing $Q$ such that
\begin{equation*}
\mathcal{M}_{w}(f_{n})\geq\frac{9C_{5}+9}{20C_{5}}\mathcal{A},\qquad \text{for every}\,\,\, w\subseteq\Lambda^{+}(Q)\quad\mathrm{with}\quad w\in{\mathcal{D}_{n}}.
\end{equation*}
Here, a subfamily of $\mathcal{D}_{n}$ is called connected if any two of its cubes can be joined by a finite chain of adjacent cubes belonging to that subfamily. Two cubes $w,w'\in{\mathcal{D}_{n}}$ are called adjacent if $\overline{w}\cap\overline{w'}\neq\emptyset$.

Define $\Sigma^{+}_{n}:=\cup_{Q\in{\Omega^{+}_{n}}}\Lambda^{+}(Q)$. Using this set, we define
\begin{eqnarray*} 
\beta_{n}^{+}(y):=
\left\{\begin{array}{ll}
\mathcal{M}_{Q}(f_{n}),\quad &y\in Q\subset{\Sigma^{+}_{n}},\,\,Q\in{\mathcal{D}_{n}};\\
\,\\
0,\quad &\mathrm{otherwise},
\end{array}\right.
\end{eqnarray*}
and 
\begin{equation*}
  V_{n}^{+}(y):=\int_{\mathbb{R}^{N}}K_{\ell_{n}}(y,z)\beta_{n}^{+}(z)d\omega(z),\qquad y\in{\mathbb{R}^{N}}.
\end{equation*}
Notice that by the definition of $\Sigma^{+}_{n}$, we have that $\{y\in{\mathbb{R}^{N}}:\,\beta^{+}_{n}(y)\neq0\}=\Sigma^{+}_{n}$.

We now give a lower bound for the function $V_{n}^{+}$ on $\Sigma^{+}_{n}$. 
\begin{lemma}\label{lem:lowbounded-kernel}
For every $x\in{\Sigma^{+}_{n}}$,
\begin{equation}\label{eq:estimate-Vn}
V^{+}_{n}(x)\geq\frac{\left(4+\frac{4}{C_{5}}\right)\mathcal{A}}{20C_{5}}-C_{7}(1+\alpha\mathcal{A})\geq\frac{4\mathcal{A}}{20C_{5}}-C_{7}(1+\alpha\mathcal{A})>0.
\end{equation}
\end{lemma}
\begin{proof} 
If $x\in{\Sigma^{+}_{n}}$, then there exists a dyadic cube $Q':=Q(x_{0}, \ell_{n})\subset\Sigma^{+}_{n}$ such that $x\in{Q'}$. Notice that by the definitions of $K_{\ell_{n}}(x,y)$ and $\beta_{n}^{+}$, we have
\begin{align*} 
  \left|V_{n}^{+}(x)-\widetilde{K}_{n}(x)\mathcal{M}_{Q'}(f_{n})\right| 
  &\leq\int_{B(x,2\mathcal{A}^{4}\ell_{n})\cap\Sigma^{+}_{n}}K_{\ell_{n}}(x,y)|\beta_{n}^{+}(y)-\mathcal{M}_{Q'}(f_{n})|d\omega(y),
\end{align*}
where 
\begin{equation*}
\widetilde{K}_{n}(x):=\int_{B(x,2\mathcal{A}^{4}\ell_{n})\cap\Sigma^{+}_{n}}K_{\ell_{n}}(x,y)d\omega(y).
\end{equation*}
Applying \eqref{eq:estimate-fn},  
\begin{equation*}
\left|V_{n}^{+}(x)-\widetilde{K}_{n}(x)\mathcal{M}_{Q'}(f_{n})\right| \leq C_{7}(1+\alpha\mathcal{A})\int_{B(x,2\mathcal{A}^{4}\ell_{n})}K_{\ell_{n}}(x,y)d\omega(y)\leq C_{7}(1+\alpha\mathcal{A}).
\end{equation*}
It implies that 
\begin{equation*}  
V_{n}^{+}(x)\geq\frac{(9C_{5}+9)\mathcal{A}\widetilde{K}_{n}(x)}{20C_{5}}-C_{7}(1+\alpha\mathcal{A}).
\end{equation*}

We now show an essential estimate of $\widetilde{K}_{n}$. Note that
\begin{equation*}
Q'\subset B(x,2\mathcal{A}^{4}\ell_{n})\cap\Sigma^{+}_{n}.
\end{equation*}
Then by $\rm{(a4)}$ of Lemma \ref{lem-kernel-properties}, for every $x\in{Q'}$,
\begin{equation*}
\widetilde{K}_{n}(x)\geq\int_{B(x,2\mathcal{A}^{4}\ell_{n})\cap\Sigma^{+}_{n}}K_{\ell_{n}}(x,y)d\omega(y)\geq\int_{Q'}K_{\ell_{n}}(x,y)d\omega(y)\geq\frac{4}{9C_{5}}.
\end{equation*}
Hence, from these facts and \eqref{eq:C7-C5-A}, Lemma \ref{lem:lowbounded-kernel} follows.
\end{proof}

Using $\Omega^{-}_{n}$, we define the sets $\Lambda^{-}(I)$ and $\Sigma^{-}_{n}$, the function $\beta_{n}^{-}$ on $\Sigma^{-}_{n}$, and the function $V_{n}^{-}$ in exactly the same way as $\Lambda^{+}(I)$, $\Sigma^{+}_{n}$, $\beta_{n}^{+}$ and $V_{n}^{+}$ are defined using $\Omega^{+}_{n}$. We also have $\{y\in{\mathbb{R}^{N}}:\,\beta^{-}_{n}(y)\neq0\}=\Sigma^{-}_{n}$. Following the same argument as in the proof of Lemma \ref{lem:lowbounded-kernel}, we also obtain
\begin{equation*} 
V_{n}^{-}(x)\leq-\frac{4\mathcal{A}}{20C_{5}}+C_{7}(1+\alpha\mathcal{A}),\qquad x\in{\Sigma^{-}_{n}}.
\end{equation*}
Let us define 
\begin{equation*}
\beta_{n}(y):=\beta_{n}^{+}(y)+\beta_{n}^{-}(y)\quad \mathrm{and}\quad V_{n}(y):=\int_{\mathbb{R}^{N}}K_{\ell_{n}}(y,z)\beta_{n}(z)d\omega(z),\quad y\in{\mathbb{R}^{N}}.
\end{equation*}
Then $V_{n}(y)=V_{n}^{+}(y)+V_{n}^{-}(y)$, $y\in{\mathbb{R}^{N}}$.

{\bf{Step II:}} The functions $\beta_{n}$, $f_{n}$ and $f_{n}-V_{n}$ satisfy $\rm(c3)-\rm(c5)$. Let $\Sigma_{n}:=\Sigma^{+}_{n}\cup\Sigma^{-}_{n}$. We now establish the following statements. For any sets $A,B\subset\mathbb{R}^{N}$, we define $\dist(A,B):=\inf_{x\in{A},\,y\in{B}}\|x-y\|$.
\begin{lemma}\label{lem:cube-estimate}
The following statements hold.
\begin{enumerate}
\item [$\rm(d1)$]
\begin{equation*}
  \dist(\Sigma^{+}_{n},\Sigma^{-}_{n})>2\ell_{n}\mathcal{A}^{4},
\end{equation*}
and hence, for every $y\in{\Sigma_{n}}$, $V_{n}^{+}(y)\neq0$ and $V_{n}^{-}(y)\neq0$ are not simultaneously true.

\item [$\rm(d2)$]
For any $Q\subset\Sigma_{n}$ with $Q\in{\mathcal{D}_{n}}$,  
\begin{equation}\label{eq:main-estimate1}
|\mathcal{M}_{Q}(f_{n})-V_{n}(x)|\leq\frac{(9C_{5}+6)\mathcal{A}}{20C_{5}}+C_{7}(1+\alpha\mathcal{A}),\qquad x\in{Q}.
\end{equation}

\item [$\rm(d3)$]
 For every dyadic cube $Q\subset\mathbb{R}^{N}$ with $\ell(Q)=\ell_{n}$, if $|\mathcal{M}_{Q}(f_{n})|<(9C_{5}+8)\mathcal{A}/20C_{5}$, then  
\begin{equation}\label{eq:main-estimate2}
 V_{n}(x)=\beta_{n}(x)=0,\qquad x\in{Q}.
\end{equation}

\item [$\rm(d4)$] For each dyadic cube $Q\subset\mathbb{R}^{N}$ with $\ell(Q)\geq\ell_{n}$,
\begin{equation*} 
|\mathcal{M}_{Q}(f_{n}-V_{n})|\leq\mathcal{A}.
\end{equation*}
\end{enumerate}
\end{lemma}
\begin{proof} 
$\rm(d1)$. Suppose $\dist(\Sigma^{+}_{n},\Sigma^{-}_{n})\leq2\mathcal{A}^{4}\ell_{n}$. Then there exist $Q\subset \Sigma^{+}_{n}$ and $Q'\subset \Sigma^{-}_{n}$, with $\ell(Q)=\ell(Q')=\ell_n$ such that $\dist(Q,Q')=\dist(\Sigma^{+}_{n},\Sigma^{-}_{n})\leq2\mathcal{A}^{4}\ell_{n}$. It then follows from \eqref{eq:estimate-fn} that  
\begin{equation*}
  |\mathcal{M}_{Q}(f_{n})-\mathcal{M}_{Q'}(f_{n})|\leq C_{7}(1+\alpha\mathcal{A}).
\end{equation*}
From the definitions of $\Sigma^{+}_{n}$ and $\Sigma^{-}_{n}$,
\begin{equation*}
|\mathcal{M}_{Q}(f_{n})-\mathcal{M}_{Q'}(f_{n})|\geq\frac{9C_{5}+9}{10C_{5}}\mathcal{A}.
\end{equation*}
By \eqref{eq:C7-C5-A}, we conclude that $\rm(d1)$ holds.

$\rm(d2)$. Firstly, we claim that, for any cube $Q\subset\Sigma_n$ with
$Q\in{\mathcal{D}_{n}}$, if
$|\mathcal{M}_Q(f_n)|\geq(9C_{5}+10)\mathcal{A}/20C_{5}$, then
\begin{equation}\label{eq:bad-cube0}
|V_n(x)-\mathcal{M}_Q(f_n)|
\leq C_7(1+\alpha\mathcal{A}),
\qquad x\in Q.
\end{equation}

First, fix $Q:=Q(x_{0},\ell_{n})\in{\mathcal{D}_{n}}$ such that $Q\subset\Sigma_{n}^{+}$ and $\mathcal{M}_{Q}(f_{n})\geq(9C_{5}+10)\mathcal{A}/20C_{5}$. If a dyadic cube $Q':=Q(x'_{0},\ell_{n})\subset\mathbb{R}^{N}$ has center $x'_{0}$ satisfying $\|x_{0}-x'_{0}\|\leq\left(\sqrt{N}+2\mathcal{A}^{4}\right)\ell_{n}$, then $Q'\subset\Sigma_{n}^{+}$. Indeed, it follows from \eqref{eq:estimate-fn} that 
\begin{equation*}
\left|\mathcal{M}_{Q}(f_{n})-\mathcal{M}_{Q'}(f_{n})\right|\leq C_{7}(1+\alpha\mathcal{A}).
\end{equation*}
Then by \eqref{eq:C7-C5-A},  
\begin{equation*}
\mathcal{M}_{Q'}(f_{n})\geq\frac{(9C_{5}+9)\mathcal{A}}{20C_{5}}.
\end{equation*}
On the other hand, since $b_{1}, b_{2},\ldots,b _{n-1}$ satisfy $\rm(c1)-\rm(c5)$, it follows that
\begin{equation}\label{eq:fn=0-R-Q}
  f_{n}(x)=0,\qquad \text{for\,\, a.e. }x\in{\mathbb{R}^{N}\backslash Q(0,2^{m_{0}})}.
\end{equation}
In fact, note that $f_{1}=f=0$, on $\mathbb{R}^{N}\backslash Q(0,2^{m_{0}})$. For any dyadic cube $Q_{1}\subset[\mathbb{R}^{N}\backslash Q(0,2^{m_{0}})]$ with $\ell(Q_{1})=\ell_{1}$, we have $\mathcal{M}_{Q_{1}}(f_{1})=0$. Then by this fact and $\rm(c4)$, 
\begin{equation*}
  U_{1}(x)=b_{1}(x)=0,\qquad \text{for\,\, a.e. }x\in{\mathbb{R}^{N}\backslash Q(0,2^{m_{0}})}.
\end{equation*}
This implies that $f_{2}(x)=0$, for a.e. $x\in{\mathbb{R}^{N}\backslash Q(0,2^{m_{0}})}$. Using $\rm(c4)$ again, we have
\begin{equation*}
  U_{2}(x)=b_{2}(x)=0,\qquad \text{for\,\, a.e. }x\in{\mathbb{R}^{N}\backslash Q(0,2^{m_{0}})}.
\end{equation*}
Iterating this argument up to $n-1$, we obtain \eqref{eq:fn=0-R-Q}.
Therefore, by \eqref{eq:fn=0-R-Q} and the definitions of $\Omega^{+}_{n}$ and $\Lambda^{+}(Q)$, we have $Q'\subset\Lambda^{+}(Q)$, and thus $Q'\subset\Sigma_{n}^{+}$.

Fix $x\in{Q(x_{0},\ell_{n})}$. For each $y\in{B(x,2\mathcal{A}^{4}\ell_{n})}$, there exists a dyadic cube $Q'\subset\mathbb{R}^{N}$ with $\ell(Q')=\ell_{n}$ such that $y\in{Q'}$. By the preceding observation, $Q'\subset\Sigma_{n}^{+}$. Then
\begin{equation*}
  |V_{n}(x)-\mathcal{M}_{Q}(f_{n})|\leq\int_{\mathbb{R}^{N}}K_{\ell_{n}}(x,y)|\beta_{n}(y)-\mathcal{M}_{Q}(f_{n})|d\omega(y)\leq\sup\limits_{\|x-y\|\leq2\ell_{n}\mathcal{A}^{4}}\left|\beta_{n}(y)-\mathcal{M}_{Q}(f_{n})\right|\leq C_{7}(1+\alpha\mathcal{A}).
\end{equation*}
This proves \eqref{eq:bad-cube0}. On the other hand, if $Q\subset\Sigma_{n}^{-}$ and $\mathcal{M}_{Q}(f_{n})\leq-(9C_{5}+10)\mathcal{A}/20C_{5}$, by an argument similar to the one above , we also obtain \eqref{eq:bad-cube0}.

Now, we prove \eqref{eq:main-estimate1}. Let $Q\subset\Sigma_{n}^{+}$ with $Q\in{\mathcal{D}_{n}}$. Then $\mathcal{M}_{Q}(f_{n})\geq(9C_{5}+9)\mathcal{A}/20C_{5}$. We consider the following two cases.

Case 1: $\mathcal{M}_{Q}(f_{n})\geq(9C_{5}+10)\mathcal{A}/20C_{5}$. By \eqref{eq:bad-cube0}, we deduce that 
\begin{equation*}
  |\mathcal{M}_{Q}(f_{n})-V_{n}(x)|\leq C_{7}(1+\alpha\mathcal{A}),\qquad x\in{Q}.
\end{equation*}

Case 2: $(9C_{5}+9)\mathcal{A}/20C_{5}\leq\mathcal{M}_{Q}(f_{n})<(9C_{5}+10)\mathcal{A}/20C_{5}$. It follows from \eqref{eq:estimate-Vn} that 
\begin{equation*}
V_{n}(x)\geq \frac{4\mathcal{A}}{20C_{5}}-C_{7}(1+\alpha\mathcal{A}),\qquad x\in{Q}.
\end{equation*}
Then for any $x\in{Q}$,
\begin{equation*}
\mathcal{M}_{Q}(f_{n})-V_{n}(x)\leq \frac{(9C_{5}+10)\mathcal{A}}{20C_{5}}-\left(\frac{4\mathcal{A}}{20C_{5}}-C_{7}(1+\alpha\mathcal{A})\right)\leq\frac{(9C_{5}+6)\mathcal{A}}{20C_{5}}+C_{7}(1+\alpha\mathcal{A}).
\end{equation*}
Furthermore, by \eqref{eq:estimate-fn}, we have
\begin{equation*}
 V_{n}(x)\leq\mathcal{M}_{Q}(f_{n})+C_{7}(1+\alpha\mathcal{A}),\qquad x\in{Q}.
\end{equation*}
It then follows that for any $x\in{Q}$,
\begin{equation*}
\mathcal{M}_{Q}(f_{n})-V_{n}(x)\geq \mathcal{M}_{Q}(f_{n})-\mathcal{M}_{Q}(f_{n})-C_{7}(1+\alpha\mathcal{A})\geq-C_{7}(1+\alpha\mathcal{A}).
\end{equation*}
Thus we conclude that \eqref{eq:main-estimate1} holds.

Moreover, if $Q\subset\Sigma_{n}^{-}$ with $Q\in{\mathcal{D}_{n}}$, by an argument analogous to Cases 1 and 2, we also have \eqref{eq:main-estimate1}. 

$\rm(d3)$. Let $Q\subset\mathbb{R}^{N}$ be a dyadic cube and $\ell(Q)=\ell_{n}$. By the definitions of $\beta_{n}$ and $\Omega_{n}$, we have $\beta_{n}=0$ on $Q$. Assume $V_{n}(x)\neq0$, for some $x\in{Q}$. Then 
\begin{equation}\label{eq:vn-d0}
0<|V_{n}(x)|\leq\int_{B(x,2\mathcal{A}^{4}\ell_{n})}K_{\ell_{n}}(x,y)|\beta_{n}(y)|d\omega(y).
\end{equation}
It implies that $\mathcal{S}_{n}(x)\neq\emptyset$, where
\begin{equation}\label{eq:supp-betan}
\mathcal{S}_{n}(x):=\Sigma_{n}\cap B(x,2\mathcal{A}^{4}\ell_{n}).
\end{equation}
Note that by the definitions of $\beta_{n}$ and $\Sigma_{n}$,
\begin{equation*}
  \{y\in{\mathbb{R}^{N}}:\,\beta_{n}(y)\neq0\}=\Sigma_{n}.
\end{equation*}
Then for every $y\in{\mathcal{S}_{n}(x)}$, there exists a cube $Q'\subset\Sigma_{n}$ with $Q'\in{\mathcal{D}_{n}}$ such that $y\in{Q'}$. By \eqref{eq:estimate-fn},  
\begin{equation}\label{eq:bad-cube}
  |\beta_{n}(y)-\mathcal{M}_{Q}(f_{n})|=|\mathcal{M}_{Q'}(f_{n})-\mathcal{M}_{Q}(f_{n})|\leq C_{7}(1+\alpha\mathcal{A}).
\end{equation}
On the other hand, since $|\mathcal{M}_{Q}(f_{n})|<(9C_{5}+8)\mathcal{A}/20C_{5}$ and $|\mathcal{M}_{Q'}(f_{n})|\geq(9C_{5}+9)\mathcal{A}/20C_{5}$, we have
\begin{equation*}
  \left|\mathcal{M}_{Q'}(f_{n})-\mathcal{M}_{Q}(f_{n})\right|\geq \frac{(9C_{5}+9)\mathcal{A}}{20C_{5}}-\frac{(9C_{5}+8)\mathcal{A}}{20C_{5}}=\frac{\mathcal{A}}{20C_{5}}.
\end{equation*}
These facts yield a contradiction by \eqref{eq:C7-C5-A}. Moreover, by \eqref{eq:bad-cube}, we also have $|\mathcal{M}_{Q'}(f_{n})|<(9C_{5}+9)\mathcal{A}/20C_{5}$. These facts imply that $\beta_{n}(y)=0$, for $y\in B(x,2\mathcal{A}^{4}\ell_{n})$. Hence, there has a contradiction by \eqref{eq:vn-d0}. We then conclude that $V_{n}=0$ on $Q$. Therefore, \eqref{eq:main-estimate2} holds. 

$\rm(d4)$. Let $Q\subset\mathbb{R}^{N}$ be a dyadic cube and $\ell(Q)=\ell_{n}$. Consider the following three cases.

Case 1: $|\mathcal{M}_{Q}(f_{n})|<(9C_{5}+8)\mathcal{A}/20C_{5}$. By $\rm(d3)$,
\begin{equation*}
|\mathcal{M}_{Q}(f_{n}-V_{n})|=|\mathcal{M}_{Q}(f_{n})|\leq\mathcal{A}.
\end{equation*}

Case 2: $|\mathcal{M}_{Q}(f_{n})|\geq(9C_{5}+8)\mathcal{A}/20C_{5}$ and $Q\subset\Sigma_{n}$. From $\rm(d2)$ and \eqref{eq:C7-C5-A}, we get 
\begin{equation*}
|\mathcal{M}_{Q}(f_{n}-V_{n})|\leq\frac{(9C_{5}+6)\mathcal{A}}{20C_{5}}+C_{7}(1+\alpha\mathcal{A})\leq\mathcal{A}.
\end{equation*}

Case 3: $|\mathcal{M}_{Q}(f_{n})|\geq(9C_{5}+8)\mathcal{A}/20C_{5}$ and $Q\not\subset\Sigma_{n}$. First, we claim that 
\begin{equation*}
|\mathcal{M}_{Q}(f_{n})|\leq(9C_{5}+10)\mathcal{A}/20C_{5}.
\end{equation*}
Indeed, suppose $|\mathcal{M}_{Q}(f_{n})|>(9C_{5}+10)\mathcal{A}/20C_{5}$. By the definitions of $\Omega_{n}$ and $\Sigma_{n}$, we have $Q\subset\mathbb{R}^{N}\backslash Q(0,2^{m_{0}})$. It then follows from \eqref{eq:fn=0-R-Q} that $\mathcal{M}_Q(f_n)=0$, which contradicts our assumption. Hence, the claim holds.
 
We next prove
\begin{equation}\label{eq:Caes3-f-v}
|\mathcal{M}_{Q}(f_{n}-V_{n})|\leq\mathcal{A}.
\end{equation}
Suppose first that $\dist(Q,\Sigma_{n})>2\mathcal{A}^{4}\ell_{n}$. Then we have $V_{n}=0$, on $Q$. This fact together with the above claim yields \eqref{eq:Caes3-f-v}.

We now consider the remaining case $\dist(Q,\Sigma_{n})\leq2\mathcal{A}^{4}\ell_{n}$. Assume first that $\mathcal{M}_{Q}(f_{n})\geq(9C_{5}+8)\mathcal{A}/20C_{5}$. 
Let $x\in{Q}$ and $\mathcal{S}_{n}(x)\neq\emptyset$, where $\mathcal{S}_{n}(x)$ is as in \eqref{eq:supp-betan}.
By \eqref{eq:estimate-fn}, we have that for every $y\in{\mathcal{S}_{n}(x)}$,
\begin{equation*} 
  |\beta_{n}(y)-\mathcal{M}_{Q}(f_{n})|\leq C_{7}(1+\alpha\mathcal{A}).
\end{equation*}
Hence, we see
\begin{equation*} 
0\leq\mathcal{M}_{Q}(f_{n})-C_{7}(1+\alpha\mathcal{A})\leq\beta_{n}(y)\leq \mathcal{M}_{Q}(f_{n})+C_{7}(1+\alpha\mathcal{A}),\qquad y\in{\mathcal{S}_{n}(x)}.
\end{equation*}
It follows that
\begin{equation*}
0\leq V_{n}(x)\leq \mathcal{M}_{Q}(f_{n})+C_{7}(1+\alpha\mathcal{A}),\qquad x\in{Q}.
\end{equation*}
Moreover, if $x\in{Q}$ and $\mathcal{S}_{n}(x)=\emptyset$, then $V_{n}(x)=0$.
These facts yield
\begin{equation*}
0\leq \mathcal{M}_{Q}(V_{n})\leq \mathcal{M}_{Q}(f_{n})+C_{7}(1+\alpha\mathcal{A}).
\end{equation*}
By this estimate, we deduce that \eqref{eq:Caes3-f-v} holds. On the other hand, if $\mathcal{M}_{Q}(f_{n})\leq-(9C_{5}+8)\mathcal{A}/20C_{5}$, by an argument similar to the one above , we also obtain \eqref{eq:Caes3-f-v}.

For each $Q\in{\mathcal{D}}$ satisfying $\ell(Q)\geq\ell_{n}$, there exists a finite collection of pairwise disjoint dyadic subcubes $w_{i}\in{\mathcal{D}}$ with $i=1,2,\ldots,m(Q)$ such that
\begin{equation*}
 \ell(w_{i})=\ell_{n} \quad \text{and}\quad Q=\bigcup\limits_{i=1}^{m(Q)}w_{i},\quad \text{where}\quad m(Q)=\left(\frac{\ell(Q)}{\ell_{n}}\right)^{N}.
\end{equation*}
Then by Cases 1-3, 
\begin{equation*}
  |\mathcal{M}_{Q}(f_{n}-V_{n})|\leq\sum_{i=1}^{m(Q)}\frac{\omega(w_{i})}{\omega(Q)}|\mathcal{M}_{w_{i}}(f_{n}-V_{n})|
  \leq\sum_{i=1}^{m(Q)}\frac{\omega(w_{i})}{\omega(Q)}\mathcal{A}\leq\mathcal{A}.
\end{equation*}
Therefore, we complete the proof of Lemma \ref{lem:cube-estimate}. 
\end{proof}

From Lemma \ref{lem:cube-estimate}, if we substitute $\beta_{n}$ for $b_{n}$ in the definition of $f_{n+1}$, we see that $\rm(c1)$ and $\rm(c3)-\rm(c5)$ are satisfied for the index $n$. Next, we modify the function $\beta_{n}$ so that the property $\rm(c2)$ holds.

{\bf{Step III:}} Construction of the function $b_{n}$. Let $\mathcal{W}_{n}$ be the set of all cubes $Q\subset{\Sigma_{n}}$ with $Q\in{\mathcal{D}_{n}}$. Let $Q\in\mathcal{W}_{n}$ and denote
\begin{equation*}
 \Theta(Q):=\left\{\tilde{I}\subset{Q}:\,\,|\mathcal{M}_{\tilde{I}}(f)-\mathcal{M}_{Q}(f)|\geq \mathcal{A}/30C_{5},\,\tilde{I}\in{\mathcal{D}},\,\ell(\tilde{I})=\ell_{n+j},\,\,j\in{\mathbb{N}}\right\}.
\end{equation*}
Let $\{\tilde{J}_i\}_{i\in\Pi}$ denote the collection of cubes in
$\Theta(Q)$ that are maximal with respect to inclusion, where
$\Pi\subseteq\mathbb{N}$ is an at most countable index set. Thus, for
each $i\in\Pi$, there is no cube $\tilde{I}\in\Theta(Q)$ such that
\[
\tilde{J}_i\subsetneq\tilde{I}.
\]
By the nesting property of the dyadic grid, these maximal cubes are
pairwise disjoint:
\[
\tilde{J}_m\cap\tilde{J}_n=\varnothing,
\qquad \text{for}\,\, m,n\in\Pi,\quad m\neq n.
\]

Let $Q\in\mathcal{W}_{n}$ and $\mathcal{G}(Q):=\cup_{i\in{\Pi}}\tilde{J}_i$. We now define
\begin{eqnarray*} 
b_{n}(y):=
\left\{\begin{array}{ll}
\beta_{n}(y),\quad &y\in Q\backslash\mathcal{G}(Q),\,\,Q\in\mathcal{W}_{n};\\
\,\\
0,\quad &\mathrm{otherwise},
\end{array}\right.
\end{eqnarray*}
and 
\begin{equation*}
  U_{n}(x):=\int_{\mathbb{R}^{N}}K_{\ell_{n}}(x,y)b_{n}(y)d\omega(y),\quad x\in{\mathbb{R}^{N}}.
\end{equation*}

We establish an estimate for $|U_{n}-V_{n}|$ as follows.
\begin{lemma}\label{lem:U-V-estimate}
For any $x\in\mathbb{R}^{N}$,
\begin{equation*}
  |U_{n}(x)-V_{n}(x)|\lesssim_{C_{1},C_{4},C_{6}}c_{0}\mathcal{A}^{4N+1}e^{-\frac{c_{1}\mathcal{A}}{60C_{5}}},
\end{equation*}
where $c_{0},c_{1}$ are the absolute constants from $\rm(b3)$ of Lemma \ref{lem-bmo-properties}.
\end{lemma}
\begin{proof}
Let $Q\in\mathcal{W}_{n}$. We first claim that
\begin{equation}\label{eq:main-estimate4}
  \omega\left(\mathcal{G}(Q)\right)\leq 2c_{0}e^{-\frac{c_{1}\mathcal{A}}{60C_{5}}}\omega(Q),
\end{equation}
where $c_{0},c_{1}>0$ are constants in $\rm(b3)$ of Lemma \ref{lem-bmo-properties}, independent of $Q$.
Indeed, let $\{\tilde{J}_i\}_{i\in\Pi}\subset\Theta(Q)$ be as above. For any $i\in\Pi$, let us denote
\begin{equation*}
  \tilde{J}_{i}^{+}:=\{y\in{\tilde{J}_{i}}: |f(y)- \mathcal{M}_{\tilde{J}_{i}}(f)|>\mathcal{A}/60C_{5}\}
\end{equation*}
and 
\begin{equation*}
 \tilde{J}_{i}^{-}:=\{y\in{\tilde{J}_{i}}: |f(y)- \mathcal{M}_{\tilde{J}_{i}}(f)|\leq\mathcal{A}/60C_{5}\},
\end{equation*}
and 
\begin{equation*}
\mathcal{G}^{+}(Q):=\bigcup_{i\in{\Pi}}\tilde{J}_{i}^{+}\quad \mathrm{and}\quad\mathcal{G}^{-}(Q):=\bigcup_{i\in{\Pi}}\tilde{J}_{i}^{-}.
\end{equation*}
Firstly, by $\rm(b3)$ in Lemma \ref{lem-bmo-properties}, we have that
\begin{equation*}
  \omega\left(\mathcal{G}^{+}(Q)\right)\leq \sum\limits_{i\in{\Pi}}\omega\left(\tilde{J}_{i}^{+}\right)\leq c_{0}e^{-\frac{c_{1}\mathcal{A}}{60C_{5}}}\sum\limits_{i\in{\Pi}}\omega\left(\tilde{J}_{i}\right)\leq c_{0}e^{-\frac{c_{1}\mathcal{A}}{60C_{5}}}\omega(Q).
\end{equation*}
On the other hand, for any $x\in{\tilde{J}_{i}^{-}}$ with $i\in{\Pi}$, we deduce
\begin{equation*}
|f(x)-\mathcal{M}_{Q}(f)|\geq \mathcal{A}/60C_{5}.
\end{equation*}
It then follows that 
\begin{equation*}
\mathcal{G}^{-}(Q)\subset\left\{x\in{Q}:\,\,|f(x)-\mathcal{M}_{Q}(f)|\geq\mathcal{A}/60C_{5}\right\}.
\end{equation*}
Then using $\rm(b3)$ in Lemma \ref{lem-bmo-properties} again, we conclude that \eqref{eq:main-estimate4} holds.

We now turn to our main objective. It is easily verified that for $x\in{\mathbb{R}^{N}}$,
\begin{align*}
  |U_{n}(x)-V_{n}(x)|
  &=\left|\int_{\mathbb{R}^{N}}K_{\ell_{n}}(x,y)(b_{n}(y)-\beta_{n}(y))d\omega(y)\right|\\
  &=\left|\int_{\bigcup\limits_{Q\in\mathcal{W}_{n}}\mathcal{G}(Q)}K_{\ell_{n}}(x,y)\beta_{n}(y)d\omega(y)\right|\\
  &\lesssim_{C_{6}}\mathcal{A} \sum\limits_{Q\in\mathcal{W}_{n}}\int_{\mathcal{G}(Q)\bigcap\{y\in{\mathbb{R}^{N}}:\,\|x-y\|\leq2\ell_{n}\mathcal{A}^{4}\}}K_{\ell_{n}}(x,y)d\omega(y).
\end{align*}  
For each $x\in\mathbb{R}^{N}$, let $m(x)$ denote the cardinality of the
collection
\[
\left\{
Q\in\mathcal{W}_{n}:
\mathcal{G}(Q)\cap
\left\{y\in\mathbb{R}^{N}:\|x-y\|
\leq 2\ell_{n}\mathcal{A}^{4}\right\}
\neq\emptyset
\right\}.
\]
Enumerate all cubes in this collection as
\[
Q_{1}(x),\ldots,Q_{m(x)}(x).
\]
Then, for every $x\in\mathbb{R}^{N}$, $m(x)\leq (4\mathcal{A}^{4}+3)^{N}$. Consequently, for any $x\in\mathbb{R}^{N}$, by \eqref{eq:main-estimate4} and the fact that distinct cubes in $\mathcal{W}_n$ are disjoint,
\begin{align*}
  |U_{n}(x)-V_{n}(x)|
  &\lesssim_{C_{6}}\mathcal{A} \sum\limits_{i=1}^{m(x)}\int_{\mathcal{G}(Q_{i}(x))\bigcap\{y\in{\mathbb{R}^{N}}:\,\|x-y\|\leq2\ell_{n}\mathcal{A}^{4}\}}K_{\ell_{n}}(x,y)d\omega(y)\\
  &\lesssim_{C_{1},C_{4},C_{6}}\mathcal{A} \sum\limits_{i=1}^{m(x)}\int_{\mathcal{G}(Q_{i}(x))}\frac{1}{\omega(Q(y,\ell_{n}))}d\omega(y)\\
  &\lesssim_{C_{1},C_{4},C_{6}} \sum\limits_{i=1}^{m(x)}\frac{\omega(Q_{i}(x))}{\omega(Q_{i}(x))}c_{0}\mathcal{A}e^{-\frac{c_{1}\mathcal{A}}{60C_{5}}}\\
  &\lesssim_{C_{1},C_{4},C_{6}}c_{0}\mathcal{A}^{4N+1}e^{-\frac{c_{1}\mathcal{A}}{60C_{5}}},
\end{align*}
where in the penultimate inequality, we used the fact that the properties of dyadic cubes imply
\begin{equation*}
\omega(Q(y,\ell_{n}))\thicksim \omega(Q_{i}(x)),\qquad y\in\mathcal{G}(Q_i(x)),
\end{equation*}
with comparability constants independent of $y,\ell_{n}$ and $Q_{i}(x)$.
\end{proof}

It follows from Lemma \ref{lem:U-V-estimate} that the functions $b_{n}$ and $f_{n+1}$ satisfy $\rm(c1)-\rm(c5)$ for sufficiently large $\mathcal{A}$, if we can prove $\rm(c2)$.

{\bf{Step IV:}} Verification of condition $\rm(c2)$ for $\{b_{n}\}_{n\in{\mathbb{N}}}$. Take a dyadic cube $Q\in\mathcal{W}_{n}$ and fix $j\in{\mathbb{N}}$, and let $\tilde{I}\subset Q$ be a dyadic subcube with $\ell(\tilde{I})=\ell_{n+j}$. Assume that $b_{n}(y)\neq0$ for some $y\in{Q}$. We next prove that if $b_{n}(y)\neq0$ for some $y\in \tilde{I}$, then $b_{n+j}(x)=0$ for any $x\in \tilde{I}$. Take
\begin{equation*}
  Q=\tilde{I}_{0}\supset \tilde{I}_{1}\supset\ldots\supset \tilde{I}_{j}=\tilde{I},
\end{equation*}
where $\tilde{I}_{i}\in\mathcal{D}$ with $\ell(\tilde{I}_{i})=\ell_{n+i}$, for $i\in\{1,2,\ldots,j\}$. First, if $b_{n}\neq0$ on $\tilde{I}_{j}$, by the definition of $b_{n}$ and the fact that $ \mathcal{G}(Q)=\bigcup_{\tilde{I}\in{\Theta(Q)}}\tilde{I}$, then we have $\tilde{I}_{i}\notin\Theta(Q)$, $i\in\{1,2,\ldots,j\}$. Then we see
\begin{equation}\label{eq:main-estimate-5}
  |\mathcal{M}_{\tilde{I}_{i}}(f)-\mathcal{M}_{Q}(f)|<\frac{\mathcal{A}}{30C_{5}},\qquad i=1,2,\ldots,j.
\end{equation}

By choosing $\mathcal{A}$ sufficiently large in Lemma \ref{lem:U-V-estimate}, we ensure that for any $x\in{\mathbb{R}^{N}}$,
\begin{equation*}
  |U_{n}(x)-V_{n}(x)|\leq \frac{\mathcal{A}}{20C_{5}}-C_{7}(1+\alpha\mathcal{A}).
\end{equation*}
It follows from this estimate and \eqref{eq:main-estimate1} that 
\begin{align*}
\left|\mathcal{M}_{Q}(f_{n})-\mathcal{M}_{\tilde{I}_{1}}(U_{n})\right|
&=\left|\mathcal{M}_{\tilde{I}_{1}}\left(\mathcal{M}_{Q}(f_{n})-U_{n}\right)\right|\\
&\leq\left|\mathcal{M}_{\tilde{I}_{1}}\left(\mathcal{M}_{Q}(f_{n})-V_{n}\right)\right|+\left|\mathcal{M}_{\tilde{I}_{1}}\left(V_{n}-U_{n}\right)\right|
\leq\frac{(9C_{5}+7)\mathcal{A}}{20C_{5}}.
\end{align*}
Moreover, by \eqref{eq:estimate-Ui}, we obtain that there exists a constant $C_{8}>0$ depending only on $\mathbf{N}$ and independent of $\mathcal{A}$ such that for every $k\in\{1,2,\ldots,j\}$,
\begin{equation}\label{eq:main-estimate-c8}
\left|\mathcal{M}_{\tilde{I}_{k}}\left(\sum\limits_{i=1}^{n-1}U_{i}\right)-\mathcal{M}_{Q}\left(\sum\limits_{i=1}^{n-1}U_{i}\right)\right|
\leq C_{8}\mathcal{A}2^{-\mathcal{A}}.
\end{equation}
Then by these facts and \eqref{eq:main-estimate-5},
\begin{align*}
  \left|\mathcal{M}_{\tilde{I}_{1}}(f_{n+1})\right|
  &\leq\left|\mathcal{M}_{\tilde{I}_{1}}(f_{n})-\mathcal{M}_{\tilde{I}_{1}}(U_{n})\right|\\
  &\leq\left|\mathcal{M}_{\tilde{I}_{1}}(f_{n})-\mathcal{M}_{Q}(f_{n})\right|+\left|\mathcal{M}_{Q}(f_{n})-\mathcal{M}_{\tilde{I}_{1}}(U_{n})\right|\\
  &\leq|\mathcal{M}_{\tilde{I}_{1}}(f)-\mathcal{M}_{Q}(f)|+\left|\mathcal{M}_{Q}(f_{n})-\mathcal{M}_{\tilde{I}_{1}}(U_{n})\right|+\left|\mathcal{M}_{\tilde{I}_{1}}\left(\sum\limits_{i=1}^{n-1}U_{i}\right)-\mathcal{M}_{Q}\left(\sum\limits_{i=1}^{n-1}U_{i}\right)\right|\\
  &\leq \frac{1}{30C_{5}}\mathcal{A}+\frac{(9C_{5}+7)\mathcal{A}}{20C_{5}}+C_{8}\mathcal{A}2^{-\mathcal{A}}.
\end{align*}
Taking $\mathcal{A}$ sufficiently large at the outset so that $60C_{5}C_{8}<2^{\mathcal{A}}$, we have
\begin{equation*}
 \left|\mathcal{M}_{\tilde{I}_{1}}(f_{n+1})\right|<\frac{(9C_{5}+8)\mathcal{A}}{20C_{5}}.
\end{equation*}
It implies that $\rm(c4)$ holds for $n+1$ and $\tilde{I}_{1}$ so that 
\begin{equation*}
  b_{n+1}=U_{n+1}=0\quad \mathrm{on}\quad \tilde{I}_{1}.
\end{equation*}
Hence, we have $f_{n+2}(y)=f_{n+1}(y)$ for $y\in{\tilde{I}_{1}}$. Since $\tilde{I}_{2}\subset \tilde{I}_{1}$, using \eqref{eq:main-estimate-5} and \eqref{eq:main-estimate-c8} again, we have
\begin{equation*}
\left|\mathcal{M}_{\tilde{I}_{2}}(f_{n+2})\right|=\left|\mathcal{M}_{\tilde{I}_{2}}(f_{n+1})\right|\leq \frac{(9C_{5}+8)\mathcal{A}}{20C_{5}}. 
\end{equation*}
Thus by $\rm(c4)$, we have 
\begin{equation*}
  b_{n+2}=U_{n+2}=0\quad \mathrm{on}\quad \tilde{I}_{2}.
\end{equation*}
Repeating the argument for $\tilde{I}_{i}$ with $i=3,\ldots,j$, we have 
\begin{equation*}
  b_{n+i}=U_{n+i}=0\quad \mathrm{on}\quad \tilde{I}_{i}.
\end{equation*}
Then we conclude that $\rm(c2)$ holds, and the proof of Proposition \ref{pro:function-bn} is complete.
\end{proof}

Now, let us continue the proof of Theorem \ref{thm-BMO-onesetp}. By Proposition \ref{pro:function-bn} and the Lebesgue differentiation theorem,
\begin{equation*}
  \left|f(y)-\sum\limits_{i=1}^{\infty}U_{i}(y)\right|\leq\mathcal{A},\qquad \mathrm{a.e.}\,\,y\in{\mathbb{R}^{N}}.
\end{equation*}
For almost every $x\in{\mathbb{R}^{N}}$, we decompose $f$ as follows: 
\begin{align}\label{eq:f-addh0}
  f(x)\notag
  &=\sum\limits_{i=1}^{\infty}U_{i}(x)+f(x)-\sum\limits_{i=1}^{\infty}U_{i}(x)\\\notag
  &=\sum\limits_{i=1}^{\infty}\int_{\mathbb{R}^{N}}p_{\ell_{i}}(x,y)b_{i}(y)d\omega(y)+f(x)-\sum\limits_{i=1}^{\infty}U_{i}(x)+\sum\limits_{i=1}^{\infty}\int_{\mathbb{R}^{N}}\left(K_{\ell_{i}}(x,y)-p_{\ell_{i}}(x,y)\right)b_{i}(y)d\omega(y)\\
  &=\sum\limits_{i=1}^{\infty}\int_{\mathbb{R}^{N}}\mathbf{p}_{\ell_{i}}(x,y)b_{i}(y)d\omega(y)+h(0)+f(x)-\sum\limits_{i=1}^{\infty}U_{i}(x)+\psi^{(1)}(x),
\end{align}
where 
\begin{equation*}
  h(0):=\sum\limits_{i=1}^{\infty}\int_{\mathbb{R}^{N}}p_{\ell_{i}}(0,y)\cdot \mathbf{1}_{\{\ell_{i}>1\}}(y,\ell_{i})b_{i}(y)d\omega(y),\quad \psi^{(1)}(x):=\sum\limits_{i=1}^{\infty}\int_{\mathbb{R}^{N}}\left(K_{\ell_{i}}(x,y)-p_{\ell_{i}}(x,y)\right)b_{i}(y)d\omega(y).
\end{equation*}

Using \eqref{eq:p-G-invariance}, $\rm(c1)$ and $\rm(c2)$ in Proposition \ref{pro:function-bn}, we have
\begin{align}\label{eq:h(0)-estimate}
  |h(0)|
  &\leq C_{6}\mathcal{A}\sum\limits_{i=1}^{\left\lfloor \frac{m_{0}}{\mathcal{A}}\right\rfloor}\int_{\supp b_{i}}p_{\ell_{i}}(0,y)\cdot \mathbf{1}_{\{\ell_{i}>1\}}(y,\ell_{i})d\omega(y)\\\notag
  &\leq C_{6}\mathcal{A}\sum\limits_{i=1}^{\left\lfloor \frac{m_{0}}{\mathcal{A}}\right\rfloor}\int_{\mathbb{R}^{N}}p_{\ell_{i}}(0,y)d\omega(y)\\\notag
  &\leq C_{6}m_{0}.\notag
\end{align}
Let us denote 
\begin{equation*}
  b^{(1)}:=\sum\limits_{i=1}^{\infty}b_{i}, 
\end{equation*}
and define
\begin{eqnarray*} 
t_{1}(y):=
\left\{\begin{array}{ll}
\ell_{i},\quad &y\in \supp b_{i}\,\,\,\mathrm{for }\,\,\, i\in{\mathbb{N}};\\
\,\\
1,\quad &y\in{\mathbb{R}^{N}\backslash\bigcup_{i\in{\mathbb{N}}}\supp b_{i}}.
\end{array}\right.
\end{eqnarray*}
Obviously, $0<t_{1}(y)\leq\max\{1,2^{m_{0}-\mathcal{A}}\}$, for $y\in{\mathbb{R}^{N}}$. By $\rm(c2)$ in Proposition \ref{pro:function-bn}, we write
\begin{equation*}
\psi^{(1)}(x)=\int_{Q(0,2^{m_{0}})}(K-p)_{t_{1}(y)}(x,y)b^{(1)}(y)d\omega(y),\qquad x\in{\mathbb{R}^{N}}.
\end{equation*}
Note that by $\rm(c1)$ and $\rm(c2)$ in Proposition \ref{pro:function-bn}, we have $|b^{(1)}(y)|\leq C_{6}\mathcal{A}$, for $y\in{Q(0,2^{m_{0}})}$. We now establish $\psi^{(1)}\in{\mathrm{BMO}(\mathbb{R}^{N}, d\omega)}$ and derive the following useful estimate.
\begin{lemma}\label{lem:psi-estimate}
There exists a constant $C_{9}>0$, independent of $\mathcal{A}$, such that
\begin{equation}\label{eq:main-estimate-6}
\left\|\psi^{(1)}\right\|_{\ast,\kappa}
\leq C_{9}\mathcal{A}^{-2}<\frac{1}{4}
\end{equation}
whenever $\sqrt{4C_{9}}<\mathcal{A}$.
\end{lemma}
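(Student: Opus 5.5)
The plan is to estimate $\psi^{(1)}$ directly by a Calderón--Zygmund type BMO argument. The source of smallness is that $K_t-p_t$ is small relative to $p_t$: it is a power $\mathcal{A}^{-3}$ smaller, up to an integrable tail. Since $|b^{(1)}|\le C_6\mathcal{A}$, this leaves a net factor $\mathcal{A}^{-2}$. Throughout I use the representation
\[
\psi^{(1)}(x)=\int_{Q(0,2^{m_0})}(K-p)_{t_1(y)}(x,y)\,b^{(1)}(y)\,d\omega(y),
\]
which is legitimate because the supports of the $b_i$ are disjoint by (c2).

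\textbf{Step 1: pointwise size of $K_t-p_t$.} Write $K_t-p_t=p_t\bigl(\chi_t^{\mathcal{A}}/\phi_t^{\mathcal{A}}-1\bigr)$. On $\{\|x-y\|>\mathcal{A}^4t\}$ I use the bound $|K_t-p_t|\lesssim p_t/C_4$ from (a3). Otherwise $\chi_t^{\mathcal{A}}=1$, and $|1/\phi_t^{\mathcal{A}}-1|\lesssim 1-\phi_t^{\mathcal{A}}$, which is at most $\int_{\|x-y\|>\mathcal{A}^4t}p_t(x,y)\,d\omega(y)$. To bound this tail, I use \eqref{eq:poisson-upper} together with $d(x,y)\le\|x-y\|$:
\[
\frac{t}{t+\|x-y\|}\le \mathcal{A}^{-3}\Bigl(\frac{t}{t+d(x,y)}\Bigr)^{1/4}\qquad\text{on }\{\|x-y\|>\mathcal{A}^4 t\}.
\]
A dyadic summation as in \eqref{eq:sequence-Dunkl}, with exponent $1/4$, then gives a tail of size $\lesssim\mathcal{A}^{-3}$. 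Hence
\[
|K_t(x,y)-p_t(x,y)|\lesssim p_t(x,y)\bigl(\mathbf 1_{\{\|x-y\|>\mathcal{A}^4t\}}+\mathcal{A}^{-3}\bigr)=:E_t(x,y).
\]
The bound $E_t$ is symmetric in $x,y$ by \eqref{eq:p-symmetry}. The same tail computation gives $\int E_t(x,y)\,d\omega(x)\lesssim\mathcal{A}^{-3}$ uniformly in $y$. For the gradient, I differentiate $p_t(\chi/\phi-1)$ using \eqref{eq:poisson-smooth}, (a1) and (a2); the derivatives of $\chi$ and $\phi$ each carry a factor $1/(\mathcal{A}^4t)$. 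This gives
\[
\|\nabla_x(K-p)_t(x,y)\|\lesssim \frac{\mathcal{A}^{-3}}{t}\,\frac{1}{V(x,y,t+d(x,y))}\Bigl(\frac{t}{t+d(x,y)}\Bigr)^{1/4},
\]
where the same $\mathcal{A}^{-3}$ trick is applied to the part of $\nabla p_t$ carried by the tail region.

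\textbf{Step 2: BMO splitting.} Fix a ball $B=B(x_0,r)$. Split $b^{(1)}=b_{\rm sm}+b_{\rm lg}$ according to whether $t_1(y)<r$ or $t_1(y)\ge r$, and write $\psi^{(1)}=\psi_{\rm sm}+\psi_{\rm lg}$ accordingly.

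For $\psi_{\rm lg}$, I compare with the constant $\psi_{\rm lg}(x_0)$ via the mean value theorem and the gradient bound of Step 1. Since $t_1$ takes only the values $\ell_i$, grouping $y$ by scale gives, for $x\in B$,
\[
|\psi_{\rm lg}(x)-\psi_{\rm lg}(x_0)|\lesssim \mathcal{A}\cdot\mathcal{A}^{-3}\sum_{\ell_i\ge r}\frac{r}{\ell_i}\lesssim\mathcal{A}^{-2}.
\]
Here each scale contributes a uniformly bounded integral by \eqref{eq:sequence-Dunkl}, and the geometric sum is uniform because $\ell_{i+1}/\ell_i=2^{-\mathcal{A}}$.

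For $\psi_{\rm sm}$, I bound $\frac1{\omega(B)}\int_B|\psi_{\rm sm}|\,d\omega$ using $E_t$ and Fubini. I split the $y$-integral into $y\in O(4B)$ and $y\notin O(4B)$.
\begin{itemize}
\item \emph{$y\in O(4B)$.} Here $\int_B E_{t_1(y)}(x,y)\,d\omega(x)\lesssim\mathcal{A}^{-3}$. Multiplying by $\mathcal{A}$ and integrating over $y$ gives a contribution $\lesssim\mathcal{A}^{-2}\omega(O(4B))\lesssim\mathcal{A}^{-2}\omega(B)$, using doubling and the orbit bound $\omega(O(B))\le|G|\,\omega(B)$.
\item \emph{$y\notin O(4B)$.} Here $d(x,y)\gtrsim d(x_0,y)\ge 3r>3t_1(y)$ for $x\in B$. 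So $E_t(x,y)\lesssim\mathcal{A}^{-3}\,t/\bigl(t+d(x_0,y)\bigr)$ times $1/\omega\bigl(B(x_0,d(x_0,y))\bigr)$, where the comparability of $V(x,y,\cdot)$ with this quantity uses doubling. Since $t/(t+d)\le r/d$ on this region, I get
\[
\mathcal{A}^{-3}\int_{d(x_0,y)>3r}\frac{r}{d(x_0,y)}\,\frac{d\omega(y)}{\omega(B(x_0,d(x_0,y)))}\lesssim\mathcal{A}^{-3},
\]
uniformly in $x\in B$, again by a dyadic-annulus sum over the orbit.
\end{itemize}
Multiplying by $|b^{(1)}|\le C_6\mathcal{A}$ gives $\lesssim\mathcal{A}^{-2}$ for both parts. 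Combining with the estimate for $\psi_{\rm lg}$ yields $\|\psi^{(1)}\|_{\ast,\kappa}\le C_9\mathcal{A}^{-2}$, which is $<1/4$ once $\mathcal{A}>\sqrt{4C_9}$.

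\textbf{Main obstacle.} I expect the hardest part to be the mixed-metric issue in the far region and in the tail of Step 1. The factor $1/V(x,y,t+d)$ is only integrable when paired with a decay in $d$, whereas the cutoff $\chi$ gives decay only in $\|x-y\|$. The device $t/(t+\|x-y\|)\le\mathcal{A}^{-3}\bigl(t/(t+d)\bigr)^{1/4}$ is what I would try first, since it trades Euclidean decay for both the $\mathcal{A}^{-3}$ gain and an integrable power of $t/(t+d)$. A second, more bookkeeping-type point is checking that $C_9$ stays independent of $\mathcal{A}$. In particular, the geometric sums over scales must be uniform, which holds because the ratio $\ell_{i+1}/\ell_i=2^{-\mathcal{A}}\le 1/2$.
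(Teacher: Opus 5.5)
Your route is correct after one repair (second paragraph), but it is organized differently from the paper's.

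\textbf{Comparison with the paper.} The paper never splits by scale. It bounds the mean oscillation on $B=B(x_0,r)$ by the double average of $|(K-p)_{t_1(y)}(x,y)-(K-p)_{t_1(y)}(z,y)|$ and splits only by position, $d(x_0,y)\le 5r$ versus $d(x_0,y)>5r$.
\begin{itemize}
\item The near part is exactly your size estimate, $\int_B|(K-p)_{t_1(y)}(x,y)|\,d\omega(x)\lesssim\mathcal{A}^{-3}$, proved with the same device $\frac{t}{t+\|x-y\|}\le\mathcal{A}^{-3}\bigl(\frac{t}{t+\|x-y\|}\bigr)^{1/4}$.
\item The far part uses a kernel-difference bound. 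It comes from a four-case analysis according to whether $x$ and $z$ lie inside or outside the cutoff radius $t_1(y)\mathcal{A}^4$, combining \eqref{eq:poisson-Holder}, \eqref{eq:phi-approxmate} and the H\"older estimate in (a3).
\end{itemize}
These bounds are pointwise in $y$ and uniform in $t=t_1(y)$. So the paper needs neither a gradient bound for $K-p$ with an $\mathcal{A}^{-3}$ gain nor any structure of the scales $\ell_i$.

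Your Carleson--Calder\'on--Zygmund style split (large scales $t_1(y)\ge r$ are smooth, small scales $t_1(y)<r$ are small) replaces that case analysis with two clean kernel bounds: the symmetric majorant $E_t$ and the gradient bound. The price is proving the gradient bound, which needs $\phi_t^{\mathcal{A}}$ to be differentiable with $\|\nabla\phi_t^{\mathcal{A}}\|\lesssim 1/(\mathcal{A}^4t)$. This follows from (a2) together with the dominated-convergence argument in the proof of (a3). Incidentally, the lacunarity $\ell_{i+1}/\ell_i=2^{-\mathcal{A}}$ is not really needed for $\psi_{\rm lg}$. For $t\ge r$, the quantity $\frac{r}{t}\,V^{-1}\bigl(t/(t+d)\bigr)^{1/4}$ is dominated by a $t$-independent function of $y$ whose integral is $O(1)$.

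\textbf{The repair.} In the far part of $\psi_{\rm sm}$, the kernel bound $E_t(x,y)\lesssim\mathcal{A}^{-3}\frac{t}{t+d(x_0,y)}\,\omega(B(x_0,d(x_0,y)))^{-1}$ is false as written. The tail term $p_t(x,y)\mathbf{1}_{\{\|x-y\|>\mathcal{A}^4t\}}$ carries no factor $\mathcal{A}^{-3}$ at exponent one when $\|x-y\|\approx d(x,y)$. For example, take $\kappa=0$ and $\|x-y\|>\mathcal{A}^4t$: then $E_t\ge p_t\approx V^{-1}\,t/(t+d)$, which exceeds your claimed bound by a factor of order $\mathcal{A}^3$.

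The fix is your own Step 1 device. It gives the global majorant $E_t(x,y)\lesssim\mathcal{A}^{-3}\,V(x,y,t+d(x,y))^{-1}\bigl(t/(t+d(x,y))\bigr)^{1/4}$. For $t<r$, $x\in B$ and $d(x_0,y)\ge 4r$, this is $\lesssim\mathcal{A}^{-3}\bigl(r/d(x_0,y)\bigr)^{1/4}\,\omega(B(x_0,d(x_0,y)))^{-1}$. That still sums over dyadic annuli (using $\omega(O(B))\le|G|\,\omega(B)$ and doubling) to $O(\mathcal{A}^{-3})$. With this change your argument yields $\|\psi^{(1)}\|_{\ast,\kappa}\le C_9\mathcal{A}^{-2}$, with $C_9$ independent of $\mathcal{A}$.
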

\begin{proof} 
Suppose $B:=B(x_{0},r)$ with $x_{0}\in{\mathbb{R}^{N}}$ and $r>0$. Write
\begin{align*}
  &\frac{1}{\omega(B)}\int_{B}\left|\psi^{(1)}(x)-\mathcal{M}_{B}(\psi^{(1)})\right|d\omega(x)\\
  &\quad\leq\frac{1}{\omega(B)^{2}}\int_{B}\int_{B}\int_{Q(0,2^{m_{0}})}\left|(K-p)_{t_{1}(y)}(x,y)-(K-p)_{t_{1}(y)}(z,y)\right||b^{(1)}(y)|d\omega(y)d\omega(z)d\omega(x)\\
  &\quad\leq\frac{C_{6}\mathcal{A}}{\omega(B)^{2}}\int_{B}\int_{B}\left(\int_{\{d(x_{0},y)\leq5r\}\cap Q(0,2^{m_{0}})}+\int_{Q(0,2^{m_{0}})\backslash \{d(x_{0},y)\leq5r\}}\right)\Big|(K-p)_{t_{1}(y)}(x,y)\\
  &\quad\quad\quad\quad\quad\quad\quad\quad\quad\quad\quad\quad\quad-(K-p)_{t_{1}(y)}(z,y)\Big|d\omega(y)d\omega(z)d\omega(x)\\
  &\quad=:\mathcal{L}_{1}+\mathcal{L}_{2},
\end{align*}
where $\{d(x_{0},y)\leq5r\}:=\{y\in{\mathbb{R}^{N}}:\,d(x_{0},y)\leq5r\}$.

Consider $\mathcal{L}_{1}$. Write
\begin{align*}
 \mathcal{L}_{1}
  &\leq\frac{C_{6}\mathcal{A}}{\omega(B)}\bigg(\int_{\{d(x_{0},y)\leq5r\}\cap Q(0,2^{m_{0}})}\int_{B}\left|(K-p)_{t_{1}(y)}(x,y)\right|d\omega(x)d\omega(y)\\
  &\quad\quad\quad\quad\quad+\int_{\{d(x_{0},y)\leq5r\}\cap Q(0,2^{m_{0}})}\int_{B}\left|(K-p)_{t_{1}(y)}(z,y)\right|d\omega(z)d\omega(y)\bigg).
\end{align*}
If we show that for any $y\in{\{d(x_{0},y)\leq5r\}\cap Q(0,2^{m_{0}})}$,
\begin{equation}\label{eq:estimate-L1}
\int_{B}\left|(K-p)_{t_{1}(y)}(x,y)\right|d\omega(x)\lesssim_{C_{1},C_{4}}\mathcal{A}^{-3},
\end{equation}
then 
\begin{equation*}
\mathcal{L}_{1}\lesssim_{C_{1},C_{4},C_{6}}\mathcal{A}^{-2},
\end{equation*}
where the implicit constants are independent of $B$ and $y$.

To prove \eqref{eq:estimate-L1}, by the definition of $K$, we have that for $x\in{B}$ and $y\in{\{d(x_{0},y)\leq5r\}\cap Q(0,2^{m_{0}})}$,
\begin{eqnarray*} 
(K-p)_{t_{1}(y)}(x,y)=
\left\{\begin{array}{ll}
p_{t_{1}(y)}(x,y)\left(\frac{1}{\phi_{t_{1}(y)}^{\mathcal{A}}(x)}-1\right),\quad &\|x-y\|\leq t_{1}(y)\mathcal{A}^{4};\\
\,\\
(K-p)_{t_{1}(y)}(x,y),\quad &t_{1}(y)\mathcal{A}^{4}<\|x-y\|<2t_{1}(y)\mathcal{A}^{4};\\
\,\\
-p_{t_{1}(y)}(x,y),\quad &\|x-y\|\geq2t_{1}(y)\mathcal{A}^{4}.
\end{array}\right.
\end{eqnarray*}
Using \eqref{eq:poisson-upper}, $\rm{(a2)}$ in Lemma \ref{lem-kernel-properties}, and a computation analogous to \eqref{eq:sequence-Dunkl}, we obtain 
\begin{align}\label{eq:phi-approxmate}
  \left|\frac{1}{\phi_{t_{1}(y)}^{\mathcal{A}}(x)}-1\right|
  &\leq\frac{1}{C_{4}}\int_{\mathbb{R}^{N}}p_{t_{1}(y)}(x,z)\left|1-\chi_{t_{1}(y)}^{\mathcal{A}}(x,z)\right|d\omega(z)\\\notag
  &\leq\frac{C_{1}}{C_{4}}\int_{\|x-z\|\geq t_{1}(y)\mathcal{A}^{4}}\frac{1}{\omega(B(x, t_{1}(y)+d(x,z)))}\frac{t_{1}(y)}{t_{1}(y)+\|x-z\|}d\omega(z)\\\notag
  &\leq\frac{C_{1}}{C_{4}\mathcal{A}^{3}}\int_{\mathbb{R}^{N}}\frac{1}{\omega(B(x, t_{1}(y)+d(x,z)))}\left(\frac{t_{1}(y)}{t_{1}(y)+\|x-z\|}\right)^{1/4}d\omega(z)\\\notag
  &\leq\frac{C_{1}}{C_{4}\mathcal{A}^{3}}\left(\int_{d(x,z)\leq t_{1}(y)}+\sum_{i=0}^{\infty}\frac{1}{2^{i/4}}\int_{d(x,z)\thicksim2^{i}t_{1}(y)}\right)\frac{1}{\omega(B(x, t_{1}(y)+d(x,z)))}d\omega(z)\\\notag
  &\lesssim_{C_{1},C_{4}}\mathcal{A}^{-3}.\notag
\end{align}
Then for $\|x-y\|\leq t_{1}(y)\mathcal{A}^{4}$, 
\begin{equation*}
\left|(K-p)_{t_{1}(y)}(x,y)\right|\lesssim_{C_{1},C_{4}}\mathcal{A}^{-3}p_{t_{1}(y)}(x,y).
\end{equation*}

In the remaining cases, namely $t_{1}(y)\mathcal{A}^{4}<\|x-y\|<2t_{1}(y)\mathcal{A}^{4}$ or $\|x-y\|\geq2t_{1}(y)\mathcal{A}^{4}$, we have
\begin{align*}
\left|(K-p)_{t_{1}(y)}(x,y)\right|
&\leq\frac{1}{C_{4}}p_{t_{1}(y)}(x,y)\leq\frac{C_{1}}{C_{4}}\frac{1}{\omega(B(y, t_{1}(y)+d(x,y)))}\frac{t_{1}(y)}{t_{1}(y)+\|x-y\|}\\
&\lesssim_{C_{1},C_{4}}\mathcal{A}^{-3}\frac{1}{\omega(B(y, t_{1}(y)+d(x,y)))}\left(\frac{t_{1}(y)}{t_{1}(y)+\|x-y\|}\right)^{1/4}.
\end{align*}
It follows from these facts that 
\begin{align*}
  &\int_{B}\left|(K-p)_{t_{1}(y)}(x,y)\right|d\omega(x)\\
  &\quad\leq\left(\int_{\|x-y\|\leq t_{1}(y)\mathcal{A}^{4}}+\int_{t_{1}(y)\mathcal{A}^{4}<\|x-y\|<2t_{1}(y)\mathcal{A}^{4}}+\int_{\|x-y\|\geq2t_{1}(y)\mathcal{A}^{4}}\right)\left|(K-p)_{t_{1}(y)}(x,y)\right|d\omega(x)\\
  &\quad\lesssim_{C_{1},C_{4}}\mathcal{A}^{-3}\int_{\mathbb{R}^{N}}p_{t_{1}(y)}(x,y)d\omega(x)+\mathcal{A}^{-3}\int_{\mathbb{R}^{N}}\frac{1}{\omega(B(y, t_{1}(y)+d(x,y)))}\left(\frac{t_{1}(y)}{t_{1}(y)+\|x-y\|}\right)^{1/4}d\omega(x)\\
  &\quad\lesssim_{C_{1},C_{4}}\mathcal{A}^{-3}.
\end{align*}
It shows \eqref{eq:estimate-L1}.

As for $\mathcal{L}_{2}$, for $x,z\in{B}$ and $y\in{Q(0,2^{m_{0}})\backslash \{d(x_{0},y)\leq5r\}}$, we have
\begin{equation*}
  \|x-z\|\leq2r\leq \min\{d(y,x),d(y,x_{0}),d(y,z)\}/2,
\end{equation*}
and 
\begin{equation*}
  \|y-x\|\thicksim\|y-x_{0}\|\thicksim\|y-z\|,\quad d(x,y)\thicksim d(x_{0},y)\thicksim d(z,y),
\end{equation*}
and 
\begin{equation*}
  \omega(B(x,d(x,y)))\thicksim \omega(B(x_{0},d(x_{0},y)))\thicksim \omega(B(z,d(z,y))).
\end{equation*}
To estimate the kernel in $\mathcal{L}_{2}$, we consider the following cases:

Case 1: $\|x-y\|\leq t_{1}(y)\mathcal{A}^{4}$ and $\|y-z\|\leq t_{1}(y)\mathcal{A}^{4}$. Observe that
\begin{equation*}
4r\leq t_{1}(y)\mathcal{A}^{4}.
\end{equation*}
Using the estimates \eqref{eq:phi-approxmate} and $\rm{(a2)}$ in Lemma \ref{lem-kernel-properties},
\begin{align*}
&\left|(K-p)_{t_{1}(y)}(x,y)-(K-p)_{t_{1}(y)}(z,y)\right|\\
&\quad\leq\left|\left(p_{t_{1}(y)}(x,y)-p_{t_{1}(y)}(z,y)\right)\left(\frac{1}{\phi_{t_{1}(y)}^{\mathcal{A}}(x)}-1\right)\right|+\left|p_{t_{1}(y)}(z,y)\left(\frac{1}{\phi_{t_{1}(y)}^{\mathcal{A}}(x)}-\frac{1}{\phi_{t_{1}(y)}^{\mathcal{A}}(z)}\right)\right|\\
&\quad\lesssim_{C_{1},C_{3},C_{4}}\frac{\mathcal{A}^{-3}}{\omega(B(x,d(x,y)))}\frac{\|x-z\|}{\|x-y\|}+\frac{\mathcal{A}^{-4}}{\omega(B(z,d(z,y)))}\frac{\|x-z\|}{\|z-y\|}.
\end{align*}

Case 2: $\|x-y\|\leq t_{1}(y)\mathcal{A}^{4}$ and $\|y-z\|\geq t_{1}(y)\mathcal{A}^{4}$. Under the assumptions of this case, together with $4r\leq t_{1}(y)\mathcal{A}^{4}$ and 
\begin{equation*}
\|y-z\|\leq\|x-y\|+\|x-z\|,
\end{equation*}
we obtain
\begin{equation*}
t_{1}(y)\mathcal{A}^{4}/2\leq\|x-y\|\leq t_{1}(y)\mathcal{A}^{4}\leq\|y-z\|\leq\frac{3}{2}t_{1}(y)\mathcal{A}^{4}.
\end{equation*}
It follows from these comparability estimates, \eqref{eq:poisson-Holder} and $\rm{(a3)}$ in Lemma \ref{lem-kernel-properties} that
\begin{align}\label{eq:k-p-estimate}
\left|(K-p)_{t_{1}(y)}(x,y)-(K-p)_{t_{1}(y)}(z,y)\right|
&\leq\left|K_{t_{1}(y)}(x,y)-K_{t_{1}(y)}(z,y)\right|+\left|p_{t_{1}(y)}(x,y)-p_{t_{1}(y)}(z,y)\right|\\\notag
&\lesssim_{C_{1},C_{2},C_{3},C_{4}}\frac{\mathcal{A}^{-3}\|x-z\|}{\omega(B(x,d(x,y)))}\left(\frac{1}{\|x-y\|}+\frac{1}{d(x,y)}\right).\notag
\end{align}

Case 3: $\|x-y\|\geq t_{1}(y)\mathcal{A}^{4}$ and $\|y-z\|\leq t_{1}(y)\mathcal{A}^{4}$. By proceeding as in Case 2, we also obtain \eqref{eq:k-p-estimate}.

Case 4: $\|x-y\|\geq t_{1}(y)\mathcal{A}^{4}$ and $\|y-z\|\geq t_{1}(y)\mathcal{A}^{4}$. Using \eqref{eq:poisson-Holder} and $\rm(a3)$ in Lemma \ref{lem-kernel-properties} again, \eqref{eq:k-p-estimate} also holds.

From these estimates in Cases 1-4, we get
\begin{align*}
\mathcal{L}_{2}
&=\frac{C_{6}\mathcal{A}}{\omega(B)^{2}}\int_{B}\int_{B}\int_{Q(0,2^{m_{0}})\backslash \{d(x_{0},y)\leq5r\}}\Big|(K-p)_{t_{1}(y)}(x,y)-(K-p)_{t_{1}(y)}(z,y)\Big|d\omega(y)d\omega(z)d\omega(x)\\
&\lesssim_{C_{1},C_{2},C_{3},C_{4},C_{6}}\mathcal{A}^{-2}.
\end{align*}
Combining the estimates for $\mathcal{L}_{1}$ and $\mathcal{L}_{2}$ and absorbing all implicit constants into $C_{9}$, we obtain \eqref{eq:main-estimate-6}.
\end{proof}

Notice that the function $\psi^{(1)}$ does not have compact support. Nevertheless, by \eqref{eq:poisson-upper}, we have
\begin{align}\label{eq:main-estimate-7}
  \left|\psi^{(1)}(x)\right|
  &\leq\int_{Q(0,2^{m_{0}})}\left|(K-p)_{t_{1}(y)}(x,y)\right||b^{(1)}(y)|d\omega(y)\\\notag
  &\lesssim\int_{Q(0,2^{m_{0}})}\frac{t_{1}(y)}{\big(t_{1}(y)+\big|\|x\|-\|y\|\big|\big)^{\mathbf{N}+1}}|b^{(1)}(y)|d\omega(y)\\\notag
  &\lesssim \frac{C_{6}\mathcal{A}2^{m_{0}\mathbf{N}}}{\|x\|^{\mathbf{N}}},\notag
\end{align}
for $\|x\|>2^{m_{0}}\sqrt{N}$. We therefore truncate $\psi^{(1)}$ outside a sufficiently large ball.
More precisely, set
\begin{eqnarray*} 
\widetilde{\psi}^{(1)}(x):=
\left\{\begin{array}{ll}
\psi^{(1)}(x),\quad &\|x\|\leq2^{m_{0}-1+n_{0}\mathcal{A}};\\
\,\\
0,\quad &\|x\|>2^{m_{0}-1+n_{0}\mathcal{A}},
\end{array}\right.
\end{eqnarray*}
where $n_{0}\in\mathbb{N}$ will be chosen sufficiently large below.
Assume that $2^{\mathcal{A}}\geq\sqrt{N}$. It follows from \eqref{eq:main-estimate-7} that for $\|x\|>2^{m_{0}-1+n_{0}\mathcal{A}}$,
\begin{equation*}
\left|\psi^{(1)}(x)\right|\lesssim\frac{C_{6}\mathcal{A}2^{m_{0}\mathbf{N}}}{2^{(m_{0}-1+n_{0}\mathcal{A})\mathbf{N}}}\lesssim C_{6}\mathcal{A}/2^{n_{0}\mathcal{A}\mathbf{N}}.
\end{equation*}
Then there exists a sufficiently large $n_{0}\in{\mathbb{N}}$ such that
\begin{equation}\label{eq:eq:main-estimate-8}
\left\|\widetilde{\psi}^{(1)}-\psi^{(1)}\right\|_{\ast,\kappa}\lesssim C_{6}\mathcal{A}/2^{n_{0}\mathcal{A}\mathbf{N}}<\frac{1}{6}.
\end{equation}
This estimate together with \eqref{eq:main-estimate-6} yields that $\|\widetilde{\psi}^{(1)}\|_{\ast,\kappa}<1/2$. Moreover, using \eqref{eq:main-estimate-7} again, we have 
\begin{equation*}
\left\|\widetilde{\psi}^{(1)}-\psi^{(1)}\right\|_{L^{\infty}(\mathbb{R}^{N},d\omega)}\leq\mathcal{A}.
\end{equation*}

Returning to the decomposition of $f$, for $x\in{\mathbb{R}^{N}}$,
\begin{align*}
  f(x)
  &=\sum\limits_{i=1}^{\infty}\int_{\mathbb{R}^{N}}\mathbf{p}_{\ell_{i}}(x,y)b_{i}(y)d\omega(y)+h(0)+f(x)-\sum\limits_{i=1}^{\infty}U_{i}(x)+\left(\psi^{(1)}(x)-\widetilde{\psi}^{(1)}(x)\right)+\widetilde{\psi}^{(1)}(x)\\
  &=\sum\limits_{i=1}^{\infty}\int_{\mathbb{R}^{N}}\mathbf{p}_{\ell^{(1)}_{i}}(x,y)b_{i}^{(1)}(y)d\omega(y)+h^{(1)}(0)+\tilde{b}_{0}^{(1)}(x)+\widetilde{\psi}^{(1)}(x),
\end{align*}
where for convenience, we denote
\begin{equation*}
\ell^{(1)}_{i}:=\ell_{i},\quad
b_{i}^{(1)}:=b_{i},\quad 
h^{(1)}(0):=h(0)\quad\mathrm{and} \quad 
\tilde{b}_{0}^{(1)}:=f-\sum\limits_{i=1}^{\infty}U_{i}+\psi^{(1)}-\widetilde{\psi}^{(1)}.
\end{equation*}
Notice that $\supp b_{i}^{(1)}\cap\supp b_{j}^{(1)}=\emptyset$ for all $i,j\geq1$ and $i\neq j$, and 
\begin{equation*}
  \left|b_{i}^{(1)}(x)\right|\leq C_{6}\mathcal{A}\quad \mathrm{and} \quad \left|\tilde{b}_{0}^{(1)}(x)\right|\leq2\mathcal{A},\quad x\in{\mathbb{R}^{N}},
\end{equation*}
and 
\begin{equation*}
|h^{(1)}(0)|\leq C_{6}m_{0},\quad \supp \widetilde{\psi}^{(1)}\subset B\left(0,2^{m_{0}-1+n_{0}\mathcal{A}}\right)\quad \mathrm{and} \quad \left\|\widetilde{\psi}^{(1)}\right\|_{\ast,\kappa}<\frac{1}{2}.
\end{equation*}

Fix $n_{0}\in{\mathbb{N}}$ such that \eqref{eq:eq:main-estimate-8} holds. For the function $2\widetilde{\psi}^{(1)}$, following a similar argument as for $f$, we have
\begin{equation*}
2\widetilde{\psi}^{(1)}(x)=\sum\limits_{i=1}^{\infty}\int_{\mathbb{R}^{N}}\mathbf{p}_{\ell^{(2)}_{i}}(x,y)b_{i}^{(2)'}(y)d\omega(y)+h^{(2)'}(0)+\tilde{b}_{0}^{(2)'}(x)+\widetilde{\psi}^{(2)'}(x),\quad x\in{\mathbb{R}^{N}}.
\end{equation*}
Let us denote 
\begin{equation*}
b_{i}^{(2)}=\frac{b_{i}^{(2)'}}{2},\quad 
h^{(2)}(0)=\frac{h^{(2)'}(0)}{2},\quad 
\tilde{b}_{0}^{(2)}:=\frac{\tilde{b}_{0}^{(2)'}}{2},\quad \widetilde{\psi}^{(2)}:=\frac{\widetilde{\psi}^{(2)'}}{2}.
\end{equation*}
Then
\begin{equation*}
\widetilde{\psi}^{(1)}(x)=\sum\limits_{i=1}^{\infty}\int_{\mathbb{R}^{N}}\mathbf{p}_{\ell^{(2)}_{i}}(x,y)b_{i}^{(2)}(y)d\omega(y)+h^{(2)}(0)+\tilde{b}_{0}^{(2)}(x)+\widetilde{\psi}^{(2)}(x),\qquad x\in{\mathbb{R}^{N}},
\end{equation*}
where 
\begin{equation*}
  \ell^{(2)}_{i}:=2^{m_{0}-(i-n_{0})\mathcal{A}},
\end{equation*}
and
$\supp b_{i}^{(2)}\cap\supp b_{j}^{(2)}=\emptyset$ for all $i,j\geq1$ and $i\neq j$, and 
\begin{equation*}
  \left|b_{i}^{(2)}(x)\right|\leq \frac{C_{6}\mathcal{A}}{2}\quad \mathrm{and} \quad \left|\tilde{b}_{0}^{(2)}(x)\right|\leq\frac{2\mathcal{A}}{2},\qquad\text{for any}\quad x\in{\mathbb{R}^{N}},
\end{equation*}
and 
\begin{equation*}
\supp \widetilde{\psi}^{(2)}\subset B\left(0,2^{m_{0}-1+2n_{0}\mathcal{A}}\right)\quad \mathrm{and} \quad \left\|\widetilde{\psi}^{(2)}\right\|_{\ast,\kappa}<\frac{1}{2^{2}}.
\end{equation*}
Moreover, by an argument analogous to \eqref{eq:h(0)-estimate}, we have
\begin{equation*}
  |h^{(2)}(0)|\leq \frac{C_{6}(m_{0}+n_{0}\mathcal{A})}{2}.
\end{equation*}

Repeating this argument, we get
\begin{equation*}
f(x)=\sum\limits_{j=1}^{\infty}\left(\sum\limits_{i=1}^{\infty}\int_{\mathbb{R}^{N}}\mathbf{p}_{\ell^{(j)}_{i}}(x,y)b_{i}^{(j)}(y)d\omega(y)+h^{(j)}(0)+\tilde{b}_{0}^{(j)}(x)\right)\qquad \mathrm{in}\quad\mathrm{BMO}(\mathbb{R}^{N}, d\omega),
\end{equation*}
where  
\begin{equation*}
  \ell^{(j)}_{i}:=2^{m_{0}-(i-(j-1)n_{0})\mathcal{A}},\qquad |h^{(j)}(0)|\leq2^{-j+1}C_{6}(m_{0}+n_{0}(j-1)\mathcal{A})
\end{equation*}
and
\begin{equation}\label{eq:b-estimates}
\supp b_{i}^{(j)}\subset Q(0,2^{m_{0}+jn_{0}\mathcal{A}}),\quad
\left\|\tilde{b}_{0}^{(j)}\right\|_{L^{\infty}(\mathbb{R}^{N}, d\omega)}\leq2^{-j+2}\mathcal{A},\quad \left\|b_{i}^{(j)}\right\|_{L^{\infty}(\mathbb{R}^{N}, d\omega)}\leq2^{-j+1}C_{6}\mathcal{A},
\end{equation}
And for each fixed $j\in\mathbb{N}$, the supports $\supp b_i^{(j)}$, $i\in\mathbb{N}$ are pairwise disjoint. Moreover, the estimate of $h^{(j)}(0)$ gives
\begin{equation*}
\sum\limits_{j=1}^{\infty}\left|h^{(j)}(0)\right|\leq\sum\limits_{j=1}^{\infty}\frac{C_{6}(m_{0}+n_{0}(j-1)\mathcal{A})}{2^{j-1}}\leq 2C_{6}(m_{0}+n_{0}\mathcal{A})<\infty.
\end{equation*}

For notational simplicity, for each $j\in{\mathbb{N}}$, we replace the index $(i-(j-1)n_{0})$ by $i$ in $\ell^{(j)}_{i}$, and correspondingly relabel $b_{i}^{(j)}$ as $b_{i-(j-1)n_{0}}^{(j)}$. With this convention, and since $\sum_{j=1}^{\infty}h^{(j)}(0)$ is a fixed constant, we can write
\begin{equation*}
f(x)=\sum\limits_{j=1}^{\infty}\left(\sum\limits_{i\in{\mathbb{Z}}}\int_{\mathbb{R}^{N}}\mathbf{p}_{\ell_{i}}(x,y)b_{i}^{(j)}(y)d\omega(y)+\tilde{b}_{0}^{(j)}(x)\right)\qquad \mathrm{in}\quad\mathrm{BMO}(\mathbb{R}^{N}, d\omega),
\end{equation*}
where
\begin{equation*}
b_{i}^{(j)}=0,\qquad \mathrm{for}\,\,\,i\leq -(j-1)n_{0}.
\end{equation*}
It follows from the estimates in \eqref{eq:b-estimates} that
\begin{equation*}
\left\|\sum\limits_{j=1}^{\infty}\tilde{b}_{0}^{(j)}\right\|_{L^{\infty}(\mathbb{R}^{N}, d\omega)}\leq 4\mathcal{A},
\end{equation*}
and using \eqref{eq:p-G-invariance},
\begin{align*}
  \left\|\sum\limits_{j=1}^{\infty}\sum\limits_{i\leq0}\int_{\mathbb{R}^{N}}\left|\mathbf{p}_{\ell_{i}}(\cdot,y)b_{i}^{(j)}(y)\right|d\omega(y)\right\|_{L^{\infty}(\mathbb{R}^{N}, d\omega)}
  &=\left\|\sum\limits_{j=1}^{\infty}\sum\limits_{i>-(j-1)n_{0}}^{0}\int_{\mathbb{R}^{N}}\left|\mathbf{p}_{\ell_{i}}(\cdot,y)b_{i}^{(j)}(y)\right|d\omega(y)\right\|_{L^{\infty}(\mathbb{R}^{N}, d\omega)}\\
  &\leq C_{6}\sum\limits_{j=1}^{\infty}\frac{((j-1)n_{0}+1)\mathcal{A}}{2^{j-1}}\\
  &\leq C_{6}(2n_{0}+2)\mathcal{A}.
\end{align*}
Furthermore, for any fixed $R>1$, it follows from \eqref{eq:poisson-upper} that
\begin{align*}
  &\int_{B(0,R)}\sum\limits_{j=1}^{\infty}\sum\limits_{i=1}^{\infty}\int_{\mathbb{R}^{N}}\left|\mathbf{p}_{\ell_{i}}(x,y)b_{i}^{(j)}(y)\right|d\omega(y)d\omega(x)\\
  &\quad\lesssim\sum\limits_{j=1}^{\infty}\sum\limits_{i=1}^{\infty}\int_{B(0,2R)}\left(\int_{\mathbb{R}^{N}}p_{\ell_{i}}(x,y)d\omega(x)+\omega(B(0,R))\right)\left|b_{i}^{(j)}(y)\right|d\omega(y)\\
  &\qquad+\sum\limits_{j=1}^{\infty}\sum\limits_{i=1}^{\infty}\int_{\mathbb{R}^{N}\backslash B(0,2R)}\int_{B(0,R)}\left(p_{\ell_{i}}(x,y)+p_{\ell_{i}}(0,y)\right)d\omega(x)\left|b_{i}^{(j)}(y)\right|d\omega(y)\\
  &\quad\lesssim_{R,C_{1},C_{6},\ell_{1}}\mathcal{A}+\mathcal{A}\int_{\mathbb{R}^{N}\backslash B(0,2R)}\frac{1}{\omega(B(0, d(0,y)))}\frac{1}{\|y\|}d\omega(y)<\infty.
\end{align*}
Then
\begin{equation*}
\sum\limits_{j=1}^{\infty}\sum\limits_{i=1}^{\infty}\int_{\mathbb{R}^{N}}\left|\mathbf{p}_{\ell_{i}}(x,y)b_{i}^{(j)}(y)\right|d\omega(y)<\infty,\qquad \mathrm{for}\quad\mathrm{a.e.}\,\,x\in{\mathbb{R}^{N}}.
\end{equation*}
Thus by changing the order of the summation, we get 
\begin{equation*}
f(x)
=\sum\limits_{i=1}^{\infty}\int_{\mathbb{R}^{N}}\mathbf{p}_{\ell_{i}}(x,y) \left(\sum\limits_{j=1}^{\infty}b_{i}^{(j)}(y)\right)d\omega(y)+\sum\limits_{j=1}^{\infty}\sum\limits_{i\leq0}\int_{\mathbb{R}^{N}}\mathbf{p}_{\ell_{i}}(x,y) b_{i}^{(j)}(y)d\omega(y)+\sum\limits_{j=1}^{\infty}\tilde{b}_{0}^{(j)}(x),
\end{equation*}
in $\mathrm{BMO}(\mathbb{R}^{N}, d\omega)$. Let us denote 
\begin{equation*}
  g_{i}:=\sum\limits_{j=1}^{\infty}b_{i}^{(j)},\quad i\in{\mathbb{N}},\qquad \mathrm{and} \quad g_{0}:=\sum\limits_{j=1}^{\infty}\left(\sum\limits_{i\leq0}\int_{\mathbb{R}^{N}}\mathbf{p}_{\ell_{i}}(\cdot,y) b_{i}^{(j)}(y)d\omega(y)+\tilde{b}_{0}^{(j)}\right).
\end{equation*}
Note that the constants $\mathcal{A}$ and $n_0$ are chosen independently of both $m_0$ and the function $f$. This completes the proof of Theorem \ref{thm-BMO-onesetp}.
\end{proof}

\section{The proofs of Theorems \ref{thm:balayage-measure} and \ref{thm-Hardy-balayage}}\label{sec:thm1.1-1.2}
This section is to present the proofs of Theorems \ref{thm:balayage-measure} and \ref{thm-Hardy-balayage} in Subsections \ref{sec:thm1.1} and \ref{sec:thm1.3}, respectively. 

\subsection{Proof of Theorem \ref{thm:balayage-measure}}\label{sec:thm1.1}
\hskip\parindent

First, we prove $\rm(i)$ in Theorem \ref{thm:balayage-measure}. For $E\subseteq \R^N$, define
$$
        \widehat{O}(E):=\bigcup_{\sigma\in G}\widehat{\sigma(E)}.
$$
\begin{proof}[Proof of $\rm(i)$ in Theorem \ref{thm:balayage-measure}]
Let  $\mu\in{\Re(\mathbb{R}^{N+1}_{+})}$. We first show that 
\begin{equation}\label{eq:local-integral}
\widetilde{S}_{\mu,\mathcal{P}}\in{L_{\mathrm{loc}}^{1}(\mathbb{R}^{N}, d\omega)}.
\end{equation}
We fix $R>1$ and write 
\begin{align*}
  &\int_{B(0,R)}\iint_{\mathbb{R}^{N+1}_{+}}\left|p_{t}(x,y)-p_{t}(0,y)\cdot \mathbf{1}_{\{t>1\}}(y,t)\right|d|\mu|(y,t)d\omega(x)\\
   &\quad\leq\int_{B(0,R)}\iint_{\mathbb{R}^{N}\times\{0<t\leq1\}}p_{t}(x,y)d|\mu|(y,t)d\omega(x)+\int_{B(0,R)}\iint_{\mathbb{R}^{N}\times\{t>1\}}\left|p_{t}(x,y)-p_{t}(0,y)\right|d|\mu|(y,t)d\omega(x)\\
   &\quad=: K_{1}+K_{2}.
\end{align*}

For $K_{1}$, by \eqref{eq:p-G-invariance} and \eqref{eq:poisson-upper},
\begin{align*}
  K_{1}
  &\lesssim\iint_{B(0,2R)\times\{0<t\leq1\}}\int_{\mathbb{R}^{N}}p_{t}(x,y)d\omega(x)d|\mu|(y,t)\\
  &\quad\quad+\int_{B(0,R)}\sum\limits_{i=1}^{\infty}\iint_{[B(0,2^{i+1}R)\backslash B(0,2^{i}R)]\times\{0<t\leq1\}}\frac{1}{\omega(B(x, t+d(x,y)))}\frac{t}{t+\|x-y\|}d|\mu|(y,t)d\omega(x)\\
  &\lesssim_{R}\|\mu\|_{\mathcal{C}}.
\end{align*}
As for $K_{2}$, using \eqref{eq:poisson-upper} and \eqref{eq:poisson-smooth},
\begin{align*}
  K_{2}
  &\lesssim\int_{B(0,R)}\iint_{\left\{(y,t)\in{\mathbb{R}^{N+1}_{+}}:\,\|y\|+t<4R,\,\,t>1\right\}}1d|\mu|(y,t)d\omega(x)\\
  &\quad\quad+\int_{B(0,R)}\sum\limits_{i=1}^{\infty}\iint_{\left\{(y,t)\in{\mathbb{R}^{N+1}_{+}}:\,4^{i}R\leq\|y\|+t<4^{i+1}R,\,\,t>1\right\}}\frac{\|x\|}{\omega(B(\xi, t+d(\xi,y)))}\frac{1}{t+\|\xi-y\|}d|\mu|(y,t)d\omega(x)\\
  &\lesssim_{R}\|\mu\|_{\mathcal{C}},
\end{align*}
where $\xi=sx$, for some $s\in{(0,1)}$. Combining the estimates for $K_{1}$ and $K_{2}$, we obtain \eqref{eq:local-integral}.

Now, we prove that
\begin{equation}\label{ep:i-mu}
  \left\|\widetilde{S}_{\mu,\mathcal{P}}\right\|_{\ast,\kappa}\lesssim\|\mu\|_{\mathcal{C}},
\end{equation}
where the implicit constant is independent of $\mu$.
Let $B:=B(x_{0}, r_{0})$ with $x_{0}\in{\mathbb{R}^{N}}$ and $r_{0}>0$. Write
\begin{align*}
  &\frac{1}{\omega(B)}\int_{B}\left|\widetilde{S}_{\mu,\mathcal{P}}(x)-\mathcal{M}_{B}\left(\widetilde{S}_{\mu,\mathcal{P}}\right)\right|d\omega(x)\\
  &\quad\leq\frac{1}{\big(\omega(B)\big)^{2}}\int_{B}\int_{B}\iint_{\mathbb{R}^{N+1}_{+}}\left|p_{t}(x,y)-p_{t}(z,y)\right|d|\mu|(y,t)d\omega(z)d\omega(x)\\
  &\quad\leq\frac{1}{\big(\omega(B)\big)^{2}}\int_{B}\int_{B}\left(\iint_{\widehat{O}(5B)}+\iint_{\mathbb{R}^{N+1}_{+}\backslash\widehat{O}(5B)}\right)\Big|p_{t}(x,y)-p_{t}(z,y)\Big|d|\mu|(y,t)d\omega(z)d\omega(x)\\
  &\quad=:II_{1}+II_{2}.
\end{align*}
For $II_{1}$, it follows from \eqref{eq:p-G-invariance} that
\begin{align*}
  II_{1}
  &\leq\frac{1}{\omega(B)}\iint_{\widehat{O}(5B)}\int_{\mathbb{R}^{N}}p_{t}(x,y)d\omega(x)d|\mu|(y,t)+\frac{1}{\omega(B)}\iint_{\widehat{O}(5B)}\int_{\mathbb{R}^{N}}p_{t}(z,y)d\omega(z)d|\mu|(y,t)\\
  &\lesssim\frac{|\mu|(\widehat{O}(5B))}{\omega(B)}\\
  &\lesssim\sum\limits_{\sigma\in{G}}\frac{|\mu|\left(\widehat{\sigma(5B)}\right)}{\omega(\sigma(B))}\\
  &\lesssim\|\mu\|_{\mathcal{C}},
\end{align*}
where we use the fact that since $d\omega$ is $G$-invariant,
 for any $\sigma\in{G}$,
\begin{equation*}
  \omega(B(x_{0}, r_{0}))\thicksim\omega(B(\sigma(x_{0}), r_{0})).
\end{equation*}

Consider $II_{2}$. For any $x,z\in{B}$, $y\in{\mathbb{R}^{N}\backslash O(5B)}$ and $t>0$, by \eqref{eq:poisson-Holder} (or \eqref{eq:poisson-smooth}),
\begin{equation*}
  \left|p_{t}(x,y)-p_{t}(z,y)\right|\lesssim\frac{\|x-z\|}{\omega(B(x, t+d(x,y)))}\frac{1}{t+\|x-y\|}\frac{t}{t+d(x,y)}.
\end{equation*}
Then
\begin{align*}
  II_{2}
  &\lesssim\sum\limits_{k=1}^{\infty}\frac{1}{\big(\omega(B)\big)^{2}}\int_{B}\int_{B}\iint_{\widehat{O}(5^{k+1}B)\backslash\widehat{O}(5^{k}B)}\frac{1}{\omega(B(x, t+d(x,y)))}\frac{\|x-z\|}{t+\|x-y\|}d|\mu|(y,t)d\omega(z)d\omega(x)\\
  &\lesssim\sum\limits_{k=1}^{\infty}\frac{1}{5^{k}}\frac{|\mu|\left(\widehat{O}(5^{k+1}B)\right)}{\omega(5^{k}B)}\\
  &\lesssim\|\mu\|_{\mathcal{C}}.
\end{align*}
Thus \eqref{ep:i-mu} holds and the proof of Theorem \ref{thm:balayage-measure} $\rm(i)$ is complete.
\end{proof}

To prove $\rm(ii)$ of Theorem \ref{thm:balayage-measure}, we also need the following lemma established in \cite{Uchiyama1980}.
\begin{lemma}\label{lem:measure-atoms}
Let $(\Omega, F, \nu)$ be a $\sigma$-finite measure space with no atoms. Let $\{F_{k}\}_{k\in{\mathbb{N}}}$ be a nondecreasing sequence of atomic sub-$\sigma$-fields of $F$. Let $A_{k}$ be the set of the atoms of $F_{k}$. Let $\{u_{k}(\cdot)\}_{k\in{\mathbb{N}}}$ be a sequence of nonnegative functions such that $\|\sum_{k\in{\mathbb{N}}}u_{k}\|_{L^{\infty}(\Omega,d\nu)}<1$. Then there exist mutually disjoint $F$-measurable sets $\{H_{k}\}_{k\in{\mathbb{N}}}$ such that
\begin{equation}\label{eq:atom-measure}
  \nu(H_{k}\cap Q)=\int_{Q}u_{k}(x)d\nu(x),
\end{equation}
for any $k\in{\mathbb{N}}$ and $Q\in{A_{k}}$.
\end{lemma}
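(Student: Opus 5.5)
The plan is an induction on $k$. At each stage we choose $H_k$ atom by atom inside the part of $\Omega$ not yet used, and we impose a stronger invariant than \eqref{eq:atom-measure} so that enough room is always left at finer levels. We may assume every atom of every $F_k$ has finite $\nu$-measure; by $\sigma$-finiteness, atoms of infinite measure can be handled by splitting into countably many pieces, or are irrelevant since both sides of \eqref{eq:atom-measure} are then computed piece by piece. Write
\[
  R_k:=\Omega\setminus\bigcup_{j<k}H_j,\qquad v_k:=1-\sum_{j<k}u_j>\sum_{j\ge k}u_j\ge 0,
\]
where the strict inequality uses $\|\sum_k u_k\|_{L^\infty}<1$.

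\emph{Invariant.} After $H_1,\dots,H_{k-1}$ have been chosen, we require
\begin{equation*}
  \nu(R_k\cap Q')\;\ge\;\int_{Q'}v_k\,d\nu\qquad\text{for every } m\ge k \text{ and every } Q'\in A_m.
\end{equation*}
For $k=1$ this is trivial. Given the invariant at stage $k$, fix $Q\in A_k$. We need a set $H_k\cap Q\subset R_k\cap Q$ with
\[
  \nu(H_k\cap Q)=\int_Q u_k\,d\nu
\]
and such that the invariant survives at stage $k+1$. That is, for every finer atom $Q'\subset Q$, $Q'\in A_m$ with $m\ge k+1$, we want
\[
  \nu(R_k\cap Q')-\nu(H_k\cap Q')\ge\int_{Q'}(v_k-u_k)\,d\nu .
\]
Since the invariant gives $\nu(R_k\cap Q')\ge\int_{Q'}v_k\,d\nu$, it suffices to arrange $\nu(H_k\cap Q')\le \nu(R_k\cap Q')-\int_{Q'}(v_k-u_k)\,d\nu$ for all such $Q'$. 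The natural way to achieve this is to distribute $H_k$ along the finer atoms according to the density $u_k$ and the available room. Concretely, we aim for
\[
  \nu(H_k\cap Q')=\int_{Q'}u_k\,d\nu\cdot\frac{\nu(R_k\cap Q')}{\int_{Q'}v_k\,d\nu}\cdot\theta_{Q'},
\]
with correction factors $\theta_{Q'}\le 1$ that make these prescriptions additive along the tree of atoms and sum correctly to $\int_Q u_k\,d\nu$ on $Q$. Because $u_k<v_k$, there is slack at every node of the tree, and this slack is what allows such an additive choice.

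\emph{Realizing the prescribed masses.} The atoms of $\bigcup_{m\ge k}F_m$ contained in $Q$ form a countable tree under inclusion, since the $F_m$ are nested. On it we define the target masses $\tau(Q')$ top-down: set $\tau(Q):=\int_Q u_k\,d\nu$, and split $\tau$ of a parent among its children proportionally to the quantities above, keeping $\tau(Q')\le\nu(R_k\cap Q')-\int_{Q'}(v_k-u_k)\,d\nu$. We then realize $\tau$ by a set. Because $\nu$ has no atoms, the Sierpi\'nski/Lyapunov intermediate value property lets us, level by level, pick subsets $E_m\subset R_k$ with $\nu(E_m\cap Q')=\tau(Q')$ for all $Q'\in A_m$. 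We choose them nested, $E_{m+1}\subset E_m$ or $E_{m+1}\supset E_m$ inside each parent atom, only adjusting inside the finer atoms. Passing to a limit set $H_k\cap Q$ then gives the required masses on all atoms simultaneously. The cleanest way to organize this limit is to construct an increasing family $\{E(s)\}_{s\in[0,1]}$ in each atom with $\nu(E(s))$ linear in $s$, and to define $H_k$ by a level-set selection, mimicking the product picture
\[
  \Big\{(x,s):\ \textstyle\sum_{j<k}u_j(x)\le s<\sum_{j\le k}u_j(x)\Big\}\subset\Omega\times[0,1].
\]

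\emph{Main obstacle.} The difficulty is exactly this compatibility across infinitely many finer levels. At stage $k$, condition \eqref{eq:atom-measure} only constrains $H_k$ on atoms of $F_k$, but a careless choice can exhaust $R_k$ inside some small atom of $F_m$, $m>k$, and leave no room for $H_m$ there. The strengthened invariant, together with the strict inequality $\sum_j u_j<1$ (which gives the slack $v_k-u_k>0$), is what should prevent this. The technical heart is checking that the top-down splitting of $\tau$ can always be done within the bounds, and that the nested-limit construction converges to a measurable set realizing every prescribed mass. Once the invariant propagates, disjointness holds because $H_k\subset R_k$, and \eqref{eq:atom-measure} holds on each $Q\in A_k$ by construction.
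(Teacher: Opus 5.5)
The paper gives no proof of this lemma; it quotes it from Uchiyama \cite{Uchiyama1980}. Your proposal has a genuine gap: its central device, the invariant, cannot be maintained even for one step.

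\textbf{The invariant forces $H_1$ to be null.} At stage $k=2$ you have $R_2=\Omega\setminus H_1$ and $v_2=1-u_1$, so the invariant says $\nu(H_1\cap Q')\le\int_{Q'}u_1\,d\nu$ for every $Q'\in A_m$, $m\ge2$. Each $Q\in A_1$ is a countable disjoint union of such $Q'$, and you also need $\nu(H_1\cap Q)=\int_Q u_1\,d\nu$. Summing, every one of these inequalities must be an equality, i.e.\ $H_1$ must reproduce the mass of $u_1$ on every atom of every finer level.

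When the atoms shrink to points, the differentiation (martingale convergence) theorem then gives $\mathbf{1}_{H_1}=u_1$ a.e. This is exactly how the paper applies the lemma, with $A_k$ the grid of cubes of side $\epsilon_k\to0$. Since $u_1<1$, this forces $\nu(H_1)=0$.

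Concretely, on $[0,1)$ take $A_1=\{[0,1)\}$, $A_k$ the dyadic intervals of length $2^{1-k}$ for $k\ge2$, $u_1\equiv\frac12$ and $u_k\equiv0$ for $k\ge2$. The lemma holds trivially with $H_1=[0,\frac12)$. Yet no set satisfies your invariant, since it would need $|H_1\cap I|=\frac12|I|$ for every dyadic $I$ of length at most $\frac12$.

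In general your invariant amounts to $\sum_{j<k}\nu(H_j\cap Q')\le\int_{Q'}\sum_{j<k}u_j\,d\nu$ on arbitrarily fine atoms. That is a density bound strictly below $1$ imposed on an indicator function. Your claim that $u_k<v_k$ gives slack at every node is also wrong: at $k=1$ the bound $\nu(R_1\cap Q')-\int_{Q'}(v_1-u_1)\,d\nu$ equals $\int_{Q'}u_1\,d\nu$ exactly. The slack $v_k-u_k$ has already been spent on the right-hand side of the invariant.

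\textbf{The realization step fails for the same reason.} When $\bigcup_m F_m$ generates $F$, a set with prescribed masses $\tau(Q')$ on the atoms of all levels exists only if the limiting density of $\tau$ is $\{0,1\}$-valued. Sets $E_m$ matching $\tau$ only on $A_m$ converge at best weakly to that density, not to a set. A nested choice $E_{m+1}\subset E_m$ is incompatible with exact matching on the children unless $E_m$ already matched them. The product picture in $\Omega\times[0,1]$ likewise presupposes an independent coordinate that is not there.

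\textbf{What an induction can legitimately carry.} Only the necessary condition
\[
  \nu(R_k\cap Q')\ \ge\ \int_{Q'}\sum_{j\ge m}u_j\,d\nu,\qquad Q'\in A_m,\ m\ge k .
\]
It is necessary because $Q'$ is a union of atoms of $F_j$ for every $j\ge m$, so any solution has $\nu(H_j\cap Q')=\int_{Q'}u_j\,d\nu$ there. Its density tends to $0$ as the atoms shrink, so it is compatible with differentiation. At a node $P\in A_m$ the children's demands fall short of $P$'s by $\int_P u_m\,d\nu$.

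Even with this invariant, you still have to build $H_k\subset R_k$ with the right masses on $A_k$ that leaves this room in every finer atom. Such a set must be porous at all scales, so it needs an explicit multi-scale construction with controlled losses, not a prescription of masses on all atoms. That construction is the real content of Uchiyama's lemma, and it is missing from your proposal.
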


\begin{proof}[Proof of $\rm(ii)$ in Theorem \ref{thm:balayage-measure}]
We assume that $\|f\|_{\ast,\kappa}$ is sufficiently small. Then by Theorem \ref{thm-BMO-onesetp}, there exist bounded functions $\{g_{i}\}_{i=0}^{\infty}$ and decreasing positive numbers $\{r_{i}\}_{i\in{\mathbb{N}}}$ such that 
\begin{equation*} 
 f=\sum\limits_{i=1}^{\infty}\int_{\R^N}\mathbf{p}_{r_{i}}(\cdot,y)g_{i}(y)\,d\omega(y)+g_{0}\qquad \mathrm{in}\quad\mathrm{BMO}(\mathbb{R}^{N}, d\omega),
\end{equation*}
and
\begin{equation*}
  \left\|\sum\limits_{i=1}^{\infty}|g_{i}|\right\|_{L^{\infty}(\mathbb{R}^{N}, d\omega)}<1,\quad \mathrm{and} \quad \|g_{0}\|_{L^{\infty}(\mathbb{R}^{N}, d\omega)}<1.
\end{equation*}
Let $\{\epsilon_{k}\}_{k\in{\mathbb{N}}}$ be positive numbers such that for any $k\in{\mathbb{N}}$, 
\begin{equation*}
 \frac{\epsilon_{k}}{\epsilon_{k+1}}\in{\mathbb{N}} \quad \mathrm{and} \quad 2^{-k}r_{k}>\sqrt{N}\epsilon_{k}.
\end{equation*}
Let $\{F_{k}\}_{k\in{\mathbb{N}}}$ be the atomic sub-$\sigma$-fields such that 
\begin{equation*}
A_{k}:=\left\{Q_{m}^{(k)}:\,\,m:=(m_{1},m_{2},\ldots,m_{N})\in{\mathbb{Z}^{N}}\right\},
\end{equation*}
where $k\in{\mathbb{N}}$ and 
\begin{equation*}
Q_{m}^{(k)}:=\prod_{i=1}^{N}\left[m_{i}\epsilon_{k},\, (m_{i}+1)\epsilon_{k}\right).
\end{equation*}
Let $k\in{\mathbb{N}}$. Then for any $Q,Q'\in{A_{k}}$, $Q\cap Q'=\emptyset$, $Q\neq Q'$ and $\mathbb{R}^{N}=\bigcup_{Q\in{A_{k}}}Q$. $A_{k}$ is as in Lemma \ref{lem:measure-atoms}. Set 
\begin{equation*}
u_{k}(x):=|g_{k}(x)|,
\end{equation*}
for $x\in{\mathbb{R}^{N}}$ and $k\in{\mathbb{N}}$. Then by Lemma \ref{lem:measure-atoms}, we get mutually disjoint measurable sets $\{H_{k}\}_{k\in{\mathbb{N}}}$ and \eqref{eq:atom-measure} for the measure $d\omega$.

Let $k\in{\mathbb{N}}$. For every $Q\in{A_{k}}$, we put
\begin{eqnarray*} 
f_{k}(x):=
\left\{\begin{array}{ll}
\frac{1}{\omega(H_{k}\cap Q)}\int_{Q}g_{k}(y)d\omega(y),\quad &x\in{H_{k}\cap Q}\,\,\,\text{and}\,\,\, \omega(H_{k}\cap Q)>0;\\
\,\\
0,\quad &\text{otherwise}.
\end{array}\right.
\end{eqnarray*}
It follows from \eqref{eq:atom-measure} that 
\begin{equation*}
  \|f_{k}\|_{L^{\infty}(\mathbb{R}^{N}, d\omega)}\leq1.
\end{equation*}
Moreover, for every $Q\in{A_{k}}$, we have
\begin{equation*}
\int_{Q}[g_{k}(y)-f_{k}(y)]d\omega(y)=0.
\end{equation*}
Hence
\begin{align*}
  &\left|\int_{\mathbb{R}^{N}}\mathbf{p}_{r_{k}}(x,y)(g_{k}(y)-f_{k}(y))d\omega(y)\right|\\
  &\quad\leq\sum\limits_{Q(x_{0},\ell(Q))\in{A_{k}}}\int_{Q(x_{0},\ell(Q))}|\mathbf{p}_{r_{k}}(x,y)-\mathbf{p}_{r_{k}}(x,x_{0})||g_{k}(y)-f_{k}(y)|d\omega(y)\\
  &\quad\lesssim\left(\|g_{k}\|_{L^{\infty}(\mathbb{R}^{N}, d\omega)}+\|f_{k}\|_{L^{\infty}(\mathbb{R}^{N}, d\omega)}\right) \sum\limits_{Q(x_{0},\ell(Q))\in{A_{k}}}\bigg(\int_{Q(x_{0},\ell(Q))}
  \frac{\|y-x_{0}\|}{\omega(B(x, r_{k}+d(x,y)))}\frac{1}{r_{k}+\|x-y\|}d\omega(y)\\
  &\quad\quad\quad\quad\quad\quad+\int_{Q(x_{0},\ell(Q))}
  \frac{\|y-x_{0}\|}{\omega(B(0, r_{k}+d(0,y)))}\frac{1}{r_{k}+\|y\|}d\omega(y)\bigg)\\
  &\quad\lesssim\frac{\sqrt{N}\epsilon_{k}}{r_{k}}\int_{\mathbb{R}^{N}}\left(\frac{1}{\omega(B(x, r_{k}+d(x,y)))}\frac{r_{k}}{r_{k}+\|x-y\|}+\frac{1}{\omega(B(0, r_{k}+d(0,y)))}\frac{r_{k}}{r_{k}+\|y\|}\right)d\omega(y)\\
  &\quad\lesssim2^{-k}.
\end{align*}
Here we have used $\ell(Q)=\epsilon_{k}$, for every $Q(x_{0},\ell(Q))\in{A_{k}}$. Hence we have 
\begin{equation*} 
 f=\sum\limits_{k=1}^{\infty}\int_{\mathbb{R}^{N}}\mathbf{p}_{r_{k}}(\cdot,y)f_{k}(y)d\omega(y)+\sum\limits_{k=1}^{\infty}\int_{\mathbb{R}^{N}}\mathbf{p}_{r_{k}}(\cdot,y)(g_{k}-f_{k})(y)d\omega(y)+g_{0},
\end{equation*}
in $\mathrm{BMO}(\mathbb{R}^{N}, d\omega)$, where 
\begin{equation*}
  \left\|\sum\limits_{k=1}^{\infty}\int_{\mathbb{R}^{N}}\mathbf{p}_{r_{k}}(\cdot,y)(g_{k}-f_{k})(y)d\omega(y)+g_{0}\right\|_{L^{\infty}(\mathbb{R}^{N}, d\omega)}\lesssim\sum\limits_{k=1}^{\infty}\frac{1}{2^{k}}+\|g_{0}\|_{L^{\infty}(\mathbb{R}^{N}, d\omega)}\lesssim1,
\end{equation*}
and $\|f_{k}\|_{L^{\infty}(\mathbb{R}^{N}, d\omega)}\leq1$, and $\supp f_{i}\cap\supp f_{j}=\emptyset$ for different $i,j\in{\mathbb{N}}$.

Let us define 
\begin{eqnarray*} 
b_{1}(y):=
\left\{\begin{array}{ll}
f_{k}(y),\quad &y\in{H_{k}},\,\,\,\text{for every}\,\,\, k\in{\mathbb{N}};\\
\,\\
0,\quad &y\in{\mathbb{R}^{N}\backslash\bigcup_{k\in{\mathbb{N}}}H_{k}},
\end{array}\right.
\quad
t(y):=
\left\{\begin{array}{ll}
r_{k},\quad &y\in{H_{k}},\,\,\,\text{for every}\,\,\, k\in{\mathbb{N}};\\
\,\\
1,\quad &y\in{\mathbb{R}^{N}\backslash\bigcup_{k\in{\mathbb{N}}}H_{k}},
\end{array}\right.
\end{eqnarray*}
and 
\begin{equation*}
  b_{0}:=\sum\limits_{k=1}^{\infty}\int_{\mathbb{R}^{N}}\mathbf{p}_{r_{k}}(\cdot,y)(g_{k}-f_{k})(y)d\omega(y)+g_{0}.
\end{equation*}
Then we have
\begin{align}\label{eq:convolution-decomposition}
 f(x)\notag
 &=\sum\limits_{k=1}^{\infty}\int_{\mathbb{R}^{N}}\mathbf{p}_{r_{k}}(x,y)f_{k}(y)d\omega(y)+b_{0}(x)=\sum\limits_{k=1}^{\infty}\int_{H_{k}}\mathbf{p}_{t(y)}(x,y)b_{1}(y)d\omega(y)+b_{0}(x)\\ 
 &=\int_{\mathbb{R}^{N}}\mathbf{p}_{t(y)}(x,y)b_{1}(y)d\omega(y)+b_{0}(x),
\end{align}
in $\mathrm{BMO}(\mathbb{R}^{N}, d\omega)$.

Let us define
\begin{equation*}
  d\nu(y,t):=b_{1}(y)\omega(y)ds(y,t),
\end{equation*}
where $ds$ denotes $N$-dimensional surface measure on the graph $\{(y,t(y)):\,y\in{\mathbb{R}^{N}}\}$, or equivalently,
\begin{equation*}
d\nu(y,t):=b_{1}(y)\delta_{t=t(y)}d\omega(y),
\end{equation*}
where $\delta_{t=t(y)}$ is the Dirac measure concentrated at $t=t(y)$. Then we obtain
\begin{equation*}
f(x)=\iint_{\mathbb{R}^{N+1}_{+}}p_{t}(x,y)-p_{t}(0,y)\cdot \mathbf{1}_{\{t>1\}}(y,t)d\nu(y,t)+b_{0}(x)\qquad \mathrm{in}\quad\mathrm{BMO}(\mathbb{R}^{N}, d\omega).
\end{equation*}
It is straightforward to verify that for any ball $B:=B(x,r)\subset\mathbb{R}^{N}$,
\begin{equation*}
 |\nu|(B\times(0,r))=\int_{B}|b_{1}(y)|\mathbf{1}_{\{B\times(0,r)\}}(y, t(y))d\omega(y)\leq\|b_{1}\|_{L^{\infty}(\mathbb{R}^{N}, d\omega)}\omega(B).
\end{equation*}
Therefore, we obtain $\rm(ii)$ in Theorem \ref{thm:balayage-measure} and the proof is finished.
\end{proof}

\begin{remark}\label{rem-bmo-descom}
If we omit $h(0)$ from \eqref{eq:f-addh0}, then by arguments analogous to those used in the proof of Theorem~\ref{thm-BMO-onesetp} and in deriving \eqref{eq:convolution-decomposition}, we also obtain decompositions analogous to those introduced by Carleson and Uchiyama \cite{CarlesonLennart1976,Uchiyama1980}. More precisely, for every
$f\in \mathrm{BMO}(\mathbb{R}^{N},d\omega)$ with compact support, there exist bounded functions
$\{g_i\}_{i=0}^{\infty}$ and functions $t_i(\cdot):\,\mathbb{R}^{N}\rightarrow(0,\infty)$, $i\in\mathbb{N}$ such that
\begin{equation*}
f
=
\sum_{i=1}^{\infty}
\int_{\mathbb{R}^{N}}
p_{t_i(y)}(\,\cdot\,,y)g_i(y)\,d\omega(y)
+g_0
\qquad\text{in }\mathrm{BMO}(\mathbb{R}^{N},d\omega).
\end{equation*}
Moreover, there exist bounded functions $b_0,b_1$ and a bounded function $t(\cdot):\,\mathbb{R}^{N}\rightarrow(0,\infty)$ such that
\begin{equation*} 
f
=
\int_{\mathbb{R}^{N}}
p_{t(y)}(\cdot,y)b_1(y)\,d\omega(y)
+b_0
\qquad\text{in }\mathrm{BMO}(\mathbb{R}^{N},d\omega).
\end{equation*}
\end{remark}

\subsection{Proof of Theorem \ref{thm-Hardy-balayage}}\label{sec:thm1.3}
\hskip\parindent

This subsection is devoted to presenting the proof of Theorem \ref{thm-Hardy-balayage}. To begin with, we establish the following lemma.
\begin{lemma}\label{l-BMO-H1}
Let $f\in{H^{1}(\mathbb{R}^{N}, d\omega)}$ and $b\in{\mathrm{BMO}(\mathbb{R}^{N}, d\omega)}$. Then $\pv\int_{\mathbb{R}^{N}} f(y)b(y)d\omega(y)$ exists and
\begin{equation*}
\left|
\pv\int_{\mathbb{R}^{N}} f(y)b(y)d\omega(y)
\right|
\lesssim_{\mathbf{N}}\|f\|_{H^{1}(\mathbb{R}^{N}, d\omega)}\|b\|_{\ast,\kappa}.
\end{equation*}
\end{lemma}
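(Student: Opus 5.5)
The plan is to reduce the principal value statement to the classical $H^1$--$\mathrm{BMO}$ duality in the Dunkl setting, which follows from the Coifman--Weiss theory of spaces of homogeneous type \cite{CS1977}. The key fact we use is that truncation does not increase the $\mathrm{BMO}$ norm. Since $T_K$ is $1$-Lipschitz, for every ball $B$ and every constant $c$ we have $|T_K(b(y))-T_K(c)|\le|b(y)-c|$. Taking $c=\mathcal{M}_B(b)$ gives
\[
\frac{1}{\omega(B)}\int_B|b_K-\mathcal{M}_B(b_K)|\,d\omega\le\frac{2}{\omega(B)}\int_B|b_K-T_K(\mathcal{M}_B(b))|\,d\omega\le2\|b\|_{\ast,\kappa}.
\]
Hence $\|b_K\|_{\ast,\kappa}\le 2\|b\|_{\ast,\kappa}$ uniformly in $K$, and each $b_K\in L^\infty$, so $\int fb_K\,d\omega$ is an honest absolutely convergent integral.

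First we treat a single $(1,\infty)$-atom $a$ supported in a ball $B$. Because $\int a\,d\omega=0$, we have $\int ab_K\,d\omega=\int_B a\,(b_K-\mathcal{M}_B(b_K))\,d\omega$, so $|\int ab_K|\le 2\|b\|_{\ast,\kappa}$. Moreover $ab\in L^1$, since $b\in L^1_{\rm loc}$ and $a$ is bounded with compact support. Also $|ab_K|\le|ab|$ and $b_K\to b$ pointwise. By dominated convergence, $\int ab_K\to\int ab$, and this limit is also bounded by $2\|b\|_{\ast,\kappa}$.

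Next let $f=\sum_j\alpha_ja_j$ in $L^1$ with $\sum|\alpha_j|\le 2\|f\|_{H^1}$. For fixed $K$, since $b_K\in L^\infty$ and the series converges in $L^1$, we get $\int fb_K=\sum_j\alpha_j\int a_jb_K$. Each term is bounded by $2|\alpha_j|\|b\|_{\ast,\kappa}$ uniformly in $K$ and converges as $K\to\infty$. By dominated convergence for series (Tannery's theorem), $\lim_K\int fb_K$ exists and equals $\sum_j\alpha_j\int a_jb$. Its modulus is at most $2\sum|\alpha_j|\|b\|_{\ast,\kappa}\le 4\|f\|_{H^1}\|b\|_{\ast,\kappa}$. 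Taking the infimum over decompositions gives the bound. The constant here is even absolute; the implicit dependence on $\mathbf{N}$ allowed in the statement is harmless.

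The only delicate point is the interchange of the limit in $K$ with the infinite atomic sum. The uniform bound $\|b_K\|_{\ast,\kappa}\le2\|b\|_{\ast,\kappa}$, together with the $\ell^1$ summability of the coefficients, handles it. So the main step to verify carefully is the truncation estimate above, in particular that the mean-zero property of atoms lets us subtract the constant $\mathcal{M}_B(b_K)$ rather than $T_K(\mathcal{M}_B(b))$.
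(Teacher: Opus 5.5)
Your proposal is correct and follows essentially the same route as the paper. Both arguments use the atomic decomposition, a uniform-in-$K$ bound on $\int a_j b_K\,d\omega$ obtained from a uniform $\mathrm{BMO}$ bound on the truncations, dominated convergence for each atom, and dominated convergence for the series; the only difference is that you prove $\|b_K\|_{\ast,\kappa}\le 2\|b\|_{\ast,\kappa}$ directly from the $1$-Lipschitz property of $T_K$, while the paper simply quotes the bound with constant $9/4$.
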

\begin{proof}
Take an atomic decomposition of $f\in H^1(\mathbb{R}^{N}, d\omega)$: $f=\sum_{j=1}^{\infty}\alpha_j a_j$ and $\sum_{j=1}^{\infty}|\alpha_j|<\infty$,
where for every $j\in{\mathbb{N}}$, there exists a ball $B_{j}\subset\mathbb{R}^{N}$ such that $\supp a_j\subset B_{j}$ and
\begin{equation*}
\|a_j\|_{L^{\infty}(\mathbb{R}^{N}, d\omega)}\leq \frac{1}{\omega(B_j)}\quad \mathrm{and}\quad \int_{\mathbb{R}^{N}}a_j(x)d\omega(x)=0.
\end{equation*}

We first claim that 
\begin{equation}\label{eq:fbN}
\lim_{K\to\infty}\int_{\mathbb{R}^{N}} f(y)b_K(y)d\omega(y)
=
\sum_{j=1}^{\infty}\alpha_j\int_{\mathbb{R}^{N}} a_j(y)b(y)d\omega(y),
\end{equation}
where $b_{K}$ is as in \eqref{eq:bN-notation}.
Indeed, for every $j\in{\mathbb{N}}$ and $K>0$, we have
\begin{equation}\label{eq:f-atom-decom}
\left|\int_{\mathbb{R}^{N}} a_j(y)b(y)d\omega(y)\right|\leq \|b\|_{\ast,\kappa}\qquad \text{and }\qquad \left|\int_{\mathbb{R}^{N}} a_j(y)b_{K}(y)d\omega(y)\right|\leq\frac{9}{4} \|b\|_{\ast,\kappa},
\end{equation}
where we have used the standard estimate $\|b_{K}\|_{\ast,\kappa}\leq\frac{9}{4} \|b\|_{\ast,\kappa}$ for every $K>0$.
Since $|b_{K}|\leq K$, \eqref{eq:f-atom-decom} implies
\[
\int_{\mathbb{R}^{N}} f(y)b_K(y)d\omega(y)
=
\sum_{j=1}^{\infty}\alpha_j\int_{\mathbb{R}^{N}} a_j(y)b_{K}(y)d\omega(y).
\]

Notice that 
\[
\lim\limits_{K\rightarrow\infty}b_K(y)=b(y)
\quad a.e.\,y\in{\mathbb{R}^{N}},
\qquad
|b_K|\le |b|.
\]
Since $b\in{L_{\mathrm{loc}}^{1}(\mathbb{R}^{N}, d\omega})$, the dominated convergence yields
\[
\lim\limits_{K\rightarrow\infty}\int_{\mathbb{R}^{N}} a_j(y)b_{K}(y)d\omega(y)=\int_{\mathbb{R}^{N}} a_j(y)b(y)d\omega(y).
\]
Moreover, \eqref{eq:f-atom-decom} yields that
\[
\left|\alpha_j\int_{\mathbb{R}^{N}} a_j(y)b_{K}(y)d\omega(y)\right|
\le
\frac{9}{4} \|b\|_{\ast,\kappa}|\alpha_j|,\qquad \sum_{j=1}^{\infty}\|b\|_{\ast,\kappa}|\alpha_j|<\infty.
\]
Thus, the dominated convergence theorem for series allows us to interchange the limit and the sum in
\begin{equation*}
\lim_{K\to\infty}
\sum_{j=1}^{\infty}\alpha_j
\int_{\mathbb{R}^{N}} a_j(y)b_K(y)\,d\omega(y),
\end{equation*}
yielding \eqref{eq:fbN}. Therefore, we conclude that Lemma \ref{l-BMO-H1} holds.
\end{proof}

\begin{proof}[Proof of Theorem \ref{thm-Hardy-balayage}]
Set 
\begin{equation*}
\mathcal{N}(f):=\sup\limits_{\substack{\mu\in{\Re(\mathbb{R}^{N+1}_{+})},\\\|\mu\|_{\mathcal{C}}=1}}\left|\operatorname{p.v.}\int_{\mathbb{R}^{N}}f(y)\cdot \widetilde{S}_{\mu,\mathcal{P}}(y)d\omega(y)\right|
\end{equation*}
and 
\[
\mathcal{N}^\sharp(f):=
\sup\limits_{\substack{\mu\in{\Re(\mathbb{R}^{N+1}_{+})},\\\|\mu\|_{\mathcal{C}}=1,\ c\in \mathbb{R}}}\left|\operatorname{p.v.}\int_{\mathbb{R}^{N}}f(y)\cdot \left(\widetilde{S}_{\mu,\mathcal{P}}(y)+c\right)d\omega(y)\right|.
\]

To prove $(\rm{i})\Longrightarrow(\rm{ii})$, assume $f\in{H^{1}(\mathbb{R}^{N}, d\omega)}$. Then for any $c\in\mathbb{R}$,
\begin{equation*}
\int_{\mathbb R^{N}} cf(y)\,d\omega(y)=0.
\end{equation*}
By $\rm(i)$ in Theorem \ref{thm:balayage-measure}, we have that for any $\mu\in{\Re(\mathbb{R}^{N+1}_{+})}$, $\widetilde{S}_{\mu,\mathcal{P}}\in{\mathrm{BMO}(\mathbb{R}^{N}, d\omega)}$. Applying Lemma \ref{l-BMO-H1} and \eqref{ep:i-mu}, we obtain that for every $c\in\mathbb{R}$,
\begin{equation*}
\left|\operatorname{p.v.}\int_{\mathbb{R}^{N}}f(y)\cdot \left(\widetilde{S}_{\mu,\mathcal{P}}(y)+c\right)d\omega(y)\right|=\left|\operatorname{p.v.}\int_{\mathbb{R}^{N}}f(y)\cdot \widetilde{S}_{\mu,\mathcal{P}}(y)d\omega(y)\right|\lesssim\|f\|_{H^{1}(\mathbb{R}^{N}, d\omega)}\|\mu\|_{\mathcal{C}}.
\end{equation*}
Taking the supremum over all admissible $\mu\in{\Re(\mathbb{R}^{N+1}_{+})}$ with $\|\mu\|_{\mathcal{C}}=1$ and all $c\in\mathbb{R}$, we get
\begin{equation*}
\mathcal{N}^\sharp(f)\lesssim\|f\|_{H^{1}(\mathbb{R}^{N}, d\omega)}.
\end{equation*}

Consider $(\rm{ii})\Longrightarrow(\rm{i})$. Firstly, we establish the following fact. For $f\in L^1(\mathbb{R}^{N}, d\omega)$, $c\in{\mathbb{R}}$ and $b\in{\mathrm{BMO}(\mathbb{R}^{N}, d\omega)}$, it follows from 
\[
|(b(y)+c)_K-b_K(y)|\leq |c|, 
\]
for each $K>0$ and $y\in{\mathbb{R}^{N}}$, and the dominated convergence that
\[
\lim_{K\to\infty}
\int_{\mathbb R^{N}} f(y)\bigl[(b(y)+c)_K-b_K(y)\bigr]\,d\omega(y)
=
c\int_{\mathbb R^{N}} f(y)\,d\omega(y).
\]

We now claim that for every $f\in L^{1}(\mathbb R^{N},d\omega)$, $\mathcal{N}^\sharp(f)<\infty$ if and only if
\begin{equation}\label{eq:Nf-estimate}
\mathcal{N}(f)<\infty
\quad\text{and}\quad
\int_{\mathbb R^{N}} f(y)\,d\omega(y)=0.
\end{equation}
Indeed, if $\mathcal{N}^\sharp(f)<\infty$, then by taking $c=0$ we obtain
\[
\mathcal{N}(f)\le \mathcal{N}^\sharp(f)<\infty.
\]
Also, by taking $d\mu(y,t):=1\cdot\delta_{t=1}d\omega(y)$, we get $\|\mu\|_{\mathcal{C}}=1$. Then $\widetilde{S}_{\mu,\mathcal{P}}=S_{\mu,\mathcal{P}}=1$ and 
\[
\sup_{c\in \mathbb{R}}\left|
\int_{\mathbb R^{N}} f(y)\,(c+1)\,d\omega(y)
\right|
\le \mathcal{N}^\sharp(f)<\infty.
\]
This together with $\sup_{c\in\mathbb{R}}|c|=\infty$ further implies that $\int_{\mathbb R^{N}} f(y)\,d\omega(y)=0$. Conversely, it is obvious that $\mathcal{N}^\sharp(f)<\infty$ follows from \eqref{eq:Nf-estimate}. Then the claim holds.

Assume $g\in{C_{c}(\mathbb{R}^{N})}$, the space of all continuous functions on $\mathbb{R}^{N}$ with compact support. From $\rm(ii)$ in Theorem \ref{thm:balayage-measure}, after separating off an additive constant, there exist a bounded function $g_{0}$, a constant $c_{0}$ and a Carleson measure $\mu$ such that 
\begin{equation*} 
g(x)=g_{0}(x)+ \widetilde{S}_{\mu,\mathcal{P}}(x)+c_{0},\qquad \text{for a.e.}\,\,\,x\in{\mathbb{R}^{N}},
\end{equation*}
with $\|g_{0}\|_{L^{\infty}(\mathbb{R}^{N}, d\omega)}+\|\mu\|_{\mathcal{C}}\lesssim\|g\|_{\ast,\kappa}$.

Since 
\begin{equation*}
\widetilde{S}_{\mu,\mathcal{P}}=g-g_{0}-c_{0}\in L^\infty(\mathbb{R}^{N}, d\omega),
\end{equation*}
we have that for $f\in L^1(\mathbb{R}^{N}, d\omega)$,
\[
\left|f\widetilde{S}_{\mu,\mathcal{P}}\right|
\le
\left\|\widetilde{S}_{\mu,\mathcal{P}}\right\|_{L^\infty(\mathbb{R}^{N}, d\omega)}|f|\in L^1(\mathbb{R}^{N}, d\omega).
\]
Then the dominated convergence therefore gives
\begin{equation*}
\int_{\mathbb{R}^{N}} f(y)\widetilde{S}_{\mu,\mathcal{P}}(y)d\omega(y)
=
\lim_{K\to\infty}
\int_{\mathbb{R}^{N}} f(y)\left[\widetilde{S}_{\mu,\mathcal{P}}\right]_{K}(y)d\omega(y),
\end{equation*}
where $\left[\widetilde{S}_{\mu,\mathcal{P}}\right]_{K}:=T_K\circ \widetilde{S}_{\mu,\mathcal{P}}$. Hence, if $\|\mu\|_{\mathcal{C}}\neq0$, then by \eqref{eq:Nf-estimate} and the principal-value definition, 
\begin{align}\label{eq:linear-continuous}
  \left|\mathcal{L}_{f}(g)\right|
  &:=\left|\int_{\mathbb{R}^{N}}f(x)g(x)d\omega(x)\right|\leq\left|\int_{\mathbb{R}^{N}} f(y)\widetilde{S}_{\mu,\mathcal{P}}(y)d\omega(y)\right|+\left|\int_{\mathbb{R}^{N}}f(x)g_{0}(x)d\omega(x)\right|\\\notag
  &\leq\|\mu\|_{\mathcal{C}}\left|\operatorname{p.v.}\int_{\mathbb{R}^{N}}f(y) \iint_{\mathbb{R}^{N+1}_{+}}\mathbf{p}_{t}(y,z)\frac{d\mu(z,t)}{\|\mu\|_{\mathcal{C}}}d\omega(y)\right|+\|g_{0}\|_{L^{\infty}(\mathbb{R}^{N}, d\omega)}\|f\|_{L^{1}(\mathbb{R}^{N}, d\omega)}\\\notag
  &\lesssim\left\|g\right\|_{\ast,\kappa}\left(\|f\|_{L^{1}(\mathbb{R}^{N}, d\omega)}+\mathcal{N}(f)\right).\notag
\end{align}
Moreover, when $\|\mu\|_{\mathcal{C}}=0$, it is obvious that \eqref{eq:linear-continuous} also holds.
This estimate shows that $\mathcal{L}_{f}$ is continuous on $C_{c}(\mathbb R^{N})$ with respect to the $\mathrm{BMO}$ seminorm $\|\cdot\|_{\ast,\kappa}$.
By a standard density argument, $\mathcal{L}_{f}$ extends uniquely to a bounded linear functional on $\mathrm{CMO}(\mathbb{R}^{N}, d\omega)$, the $\mathrm{BMO}(\mathbb{R}^{N}, d\omega)$-closure of $C_{c}(\mathbb{R}^{N})$.

Since $H^{1}(\mathbb{R}^{N}, d\omega)$ is the dual space of $\mathrm{CMO}(\mathbb{R}^{N}, d\omega)$; see \cite[Theorem 5.5]{GuoLiWick2026}, there exists $h\in H^{1}(\mathbb R^{N},d\omega)$ such that
\[
\mathcal{L}_{f}(g)=\int_{\mathbb R^{N}} h(x)\,g(x)\,d\omega(x),
\qquad g\in C_{c}(\mathbb R^{N}).
\]
Hence
\[
\int_{\mathbb R^{N}} (f-h)(x)\,g(x)\,d\omega(x)=0,
\qquad g\in C_{c}(\mathbb R^{N}).
\]
Therefore $f(x)=h(x)$, a.e. $x\in{\mathbb{R}^{N}}$, and thus $f\in H^{1}(\mathbb R^{N},d\omega)$. Furthermore, by \eqref{eq:linear-continuous} 
\begin{equation*}
\|f\|_{H^{1}(\mathbb{R}^{N}, d\omega)}\lesssim\|f\|_{L^{1}(\mathbb{R}^{N}, d\omega)}+\mathcal{N}(f).
\end{equation*}
So the proof of Theorem \ref{thm-Hardy-balayage} is finished.
\end{proof}

\section{Generalized balayage associated to the Dunkl--heat semigroup}\label{sec:Dunkl-heat}

The purpose of this section is to study the generalized balayage of Carleson measures induced by the Dunkl--heat semigroup. Let $\{\mathcal{H}_{t}\}_{t>0}$ be the heat semigroup associated with $\Delta_D$ defined by 
\begin{equation*}
\mathcal{H}_{t}f(x):=e^{t\Delta_D}f(x):=\int_{\mathbb{R}^{N}}h_{t}(x,y)f(y)d\omega(y),\quad x\in{\mathbb{R}^{N}},\,t>0,
\end{equation*}
where
\begin{equation*}
h_{t}(x,y):= \frac{(2t)^{-\mathbf{N}/2}}{c_{\kappa}}e^{-\frac{\|x\|^{2}+\|y\|^{2}}{4t}}\int_{\mathbb{R}^{N}}e^{\langle\xi, \frac{y}{2t}\rangle}d\mu_{x}(\xi).
\end{equation*}
We define the balayage associated with $\{\mathcal{H}_{t}\}_{t>0}$ by
$$
  \widetilde{S}_{\nu,\mathcal{H}}(x):=\iint_{\mathbb{R}^{N+1}_{+}}h_{t^{2}}(x,y)-h_{t^{2}}(0,y)\cdot \mathbf{1}_{\{t>1\}}(y,t)d\nu(y,t), 
$$
where $\nu\in{\Re(\mathbb{R}^{N+1}_{+})}$ and $x\in{\mathbb{R}^{N}}$.

\begin{theorem}\label{thm:balayage-measure-heat}
For every $f\in{\mathrm{BMO}(\mathbb{R}^{N}, d\omega)}$ with compact support in $\mathbb{R}^{N}$, there exist $b\in{L^{\infty}(\mathbb{R}^{N}, d\omega)}$ and a Carleson measure $\nu\in{\Re(\mathbb{R}^{N+1}_{+})}$ such that
\begin{equation*} 
f=b+\widetilde{S}_{\nu,\mathcal{H}},\qquad \mathrm{in}\quad\mathrm{BMO}(\mathbb{R}^{N}, d\omega),
\end{equation*}
where
\begin{equation*}
\|b\|_{L^{\infty}(\mathbb{R}^{N}, d\omega)}+\|\nu\|_{\mathcal{C}}\lesssim\|f\|_{\ast,\kappa}.
\end{equation*}
Conversely, if $\nu\in{\Re(\mathbb{R}^{N+1}_{+})}$, then $\widetilde{S}_{\nu,\mathcal{H}}\in{\mathrm{BMO}(\mathbb{R}^{N}, d\omega)}$ with $\left\|\widetilde{S}_{\nu,\mathcal{H}}\right\|_{\ast,\kappa}\lesssim\|\nu\|_{\mathcal{C}}$.
Here, the implicit constants are independent of $f$ and $\nu$.
\end{theorem}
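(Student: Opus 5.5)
The plan is to transfer the Poisson-case argument (Theorem~\ref{thm-BMO-onesetp} and the proof of Theorem~\ref{thm:balayage-measure}) to the kernel $h_{t^{2}}(x,y)$, after recording the kernel facts that argument used. I will use the standard Dunkl heat kernel estimates: positivity and symmetry, $\int_{\mathbb{R}^N}h_{t}(x,y)\,d\omega(y)=1$, and the Gaussian bounds
\begin{equation*}
\frac{c}{\omega(B(x,t))}e^{-c'\|x-y\|^{2}/t^{2}}\lesssim h_{t^{2}}(x,y)\lesssim \frac{1}{V(x,y,t)}e^{-c\,d(x,y)^{2}/t^{2}}\Bigl(1+\frac{\|x-y\|}{t}\Bigr)^{-2}.
\end{equation*}
I also need the gradient bound $\|\nabla_x h_{t^{2}}(x,y)\|\lesssim t^{-1}\times(\text{the same majorant})$ and the corresponding H\"older estimate in $y$ for $\|y-y_1\|\le t/2$. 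These majorants are pointwise at least as good as the right-hand sides of \eqref{eq:poisson-upper}--\eqref{eq:poisson-Holder}, with $t$ in place of $t$. The one exception is the second branch of the minimum in \eqref{eq:poisson-upper}, which is replaced by $t^{-\mathbf{N}}e^{-c(\|x\|-\|y\|)^{2}/t^{2}}$. This replacement is even better, and in Section~\ref{sec:convolution-decomposition} it was only used to get the decay \eqref{eq:main-estimate-7} of $\psi^{(1)}$.

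The converse statement ($\widetilde{S}_{\nu,\mathcal{H}}\in\mathrm{BMO}$) I will prove exactly as part (i) of Theorem~\ref{thm:balayage-measure}. First, local integrability follows from the same $K_1,K_2$ splitting, using the mean-value theorem with the gradient bound for $t>1$. Second, the BMO bound splits into $II_1$ over $\widehat{O}(5B)$, handled by mass one and $G$-invariance, and $II_2$ over the dyadic shells $\widehat{O}(5^{k+1}B)\setminus\widehat{O}(5^{k}B)$. On the shells the gradient bound gives a factor $r_0/(t+\|x-y\|)\lesssim 5^{-k}$, so the sum converges against $\|\nu\|_{\mathcal{C}}$.

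For the decomposition, I will redo Section~\ref{sec:convolution-decomposition} with the auxiliary kernel $K_t(x,y)=h_{t^{2}}(x,y)\chi_t^{\mathcal{A}}(x,y)/\phi_t^{\mathcal{A}}(x)$, where $\phi_t^{\mathcal{A}}(x)=\int h_{t^{2}}(x,y)\chi_t^{\mathcal{A}}(x,y)\,d\omega(y)$. Lemma~\ref{lem-kernel-properties} (a1)--(a4) carries over, since the lower Gaussian bound on $B(x,t)$ gives $\phi\ge C_4$ and (a4). Proposition~\ref{pro:function-bn} then uses only these properties and BMO facts, so it holds verbatim. The quantitative estimate to recheck is Lemma~\ref{lem:psi-estimate}, namely $\|\psi^{(1)}\|_{\ast,\kappa}\le C_9\mathcal{A}^{-2}$. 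That estimate relied on $\bigl|1/\phi_t^{\mathcal{A}}-1\bigr|\lesssim\mathcal{A}^{-3}$ and on the tail of the kernel beyond $\|x-y\|\ge\mathcal{A}^{4}t$ being $\lesssim\mathcal{A}^{-3}$ times an integrable majorant. Here the factor $(1+\|x-y\|/t)^{-2}$ (or the Gaussian) gives $(t/\|x-y\|)^{3/4}\le\mathcal{A}^{-3}$ while keeping a factor $(t/(t+\|x-y\|))^{1/4}$, whose dyadic sums converge as in \eqref{eq:sequence-Dunkl}. With this, the iteration, the truncation $\widetilde\psi^{(j)}$, and the bound on the constants $h^{(j)}(0)$ go through unchanged. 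That yields $f=\sum_i\int\mathbf{h}_{r_i}(\cdot,y)g_i(y)\,d\omega(y)+g_0$, where $\mathbf{h}_{r}(x,y)=h_{r^{2}}(x,y)-h_{r^{2}}(0,y)\mathbf{1}_{\{r>1\}}$.

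Finally, I will apply Uchiyama's Lemma~\ref{lem:measure-atoms} exactly as in the proof of part (ii), with grids of side $\epsilon_k$ satisfying $2^{-k}r_k>\sqrt N\epsilon_k$. The H\"older estimate for $h$ makes the errors $\lesssim2^{-k}$. This produces $d\nu=b_1(y)\delta_{t=t(y)}\,d\omega(y)$ with $\|\nu\|_{\mathcal{C}}\le\|b_1\|_\infty$. I expect the main obstacle to be verifying the heat-kernel gradient and H\"older bounds in the mixed-metric form that the proof of Lemma~\ref{lem:psi-estimate} needs, in particular the extra polynomial factor in $\|x-y\|/t$ that the Gaussian in $d(x,y)$ alone does not supply. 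I would take that factor from the Dziubański--Hejna estimates for $h_t$ and its derivatives.
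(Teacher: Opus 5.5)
Your proposal is correct, but it takes a genuinely different route from the paper for the decomposition direction. The converse direction is handled exactly as the paper does it.

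\textbf{The paper's route.} The paper does not rerun the Carleson--Uchiyama construction. It takes Theorem~\ref{thm:balayage-measure}(ii) as a black box, writes $f=g+\widetilde{S}_{\mu,\mathcal{P}}$, and inserts the subordination formula $p_t(x,y)=\frac{t}{\sqrt{\pi}}\int_0^\infty s^{-2}e^{-t^2/(4s^2)}h_{s^2}(x,y)\,ds$. This turns $\widetilde{S}_{\mu,\mathcal{P}}$ into $\widetilde{S}_{\nu,\mathcal{H}}$ with $d\nu(y,s)=\frac{1}{\sqrt{\pi}}\bigl(\int_0^\infty t\,s^{-2}e^{-t^2/(4s^2)}\,d\mu(y,t)\bigr)ds$. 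Two constants $C_1,C_2$ also appear, coming from the mismatch between the cut-offs $\mathbf{1}_{\{t>1\}}$ and $\mathbf{1}_{\{s>1\}}$. What remains is:
\begin{itemize}
\item checking $\|\nu\|_{\mathcal{C}}\lesssim\|\mu\|_{\mathcal{C}}$, by splitting $t$ dyadically relative to the radius and using $\int_0^{2^{1-k}}s^{-2}e^{-1/(4s^2)}\,ds\lesssim 2^{-k(\mathbf{N}+1)}$;
\item bounding $C_1,C_2$ via the explicit formula $h_{s^2}(0,y)=c_\kappa^{-1}(\sqrt{2}s)^{-\mathbf{N}}e^{-\|y\|^2/(4s^2)}$.
\end{itemize}
This needs no new heat-kernel estimates for the decomposition direction and is much shorter. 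It works only because $\mathcal{P}$ is subordinate to $\mathcal{H}$, so it transfers Poisson results to heat and not conversely. The measure it produces is smeared over all scales $s$.

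\textbf{Your route.} You pay for a full rerun of Section~\ref{sec:convolution-decomposition} and of the Uchiyama-lemma step. Your key observation makes that rerun essentially verbatim: by doubling and $(1+d/t)^{\mathbf{N}}e^{-c\,d^2/t^2}\lesssim 1$, the heat majorants (size, gradient, H\"older) are pointwise dominated by the right-hand sides of \eqref{eq:poisson-upper}--\eqref{eq:poisson-Holder}. Only two pieces need separate treatment:
\begin{itemize}
\item the lower bound, which is used for $\phi_t^{\mathcal{A}}\ge C_4$ and (a4);
\item the $\bigl|\|x\|-\|y\|\bigr|$ branch, which is used only for \eqref{eq:main-estimate-7}.
\end{itemize}
This gains you two things. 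First, the construction is seen to work for any kernel with positivity, mass one, a lower bound on Euclidean balls of radius $t$, and mixed-metric upper and gradient bounds with polynomial decay in $\|x-y\|/t$. Second, you obtain a measure $b_1(y)\delta_{t=t(y)}\,d\omega(y)$ concentrated on a graph with bounded density, in Uchiyama's form.

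\textbf{On your stated obstacle.} The estimates you flag are available. The paper itself quotes the Dziuba\'nski--Hejna gradient bound with the factor $(1+\|x-y\|/\sqrt{t})^{-2}$ for the converse part. The lower bound $h_t(x,y)\gtrsim\omega(B(x,\sqrt{t}))^{-1}e^{-c\|x-y\|^2/t}$ comes from the same source.
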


\begin{proof} 
For every $f\in{\mathrm{BMO}(\mathbb{R}^{N}, d\omega)}$ with compact support on $\mathbb{R}^{N}$, from $\rm(ii)$ in Theorem \ref{thm:balayage-measure}, there exist $g\in{L^{\infty}(\mathbb{R}^{N}, d\omega)}$ and a Carleson measure $\mu\in{\Re(\mathbb{R}^{N+1}_{+})}$ such that
\begin{equation*} 
f=g+\widetilde{S}_{\mu,\mathcal{P}},\qquad \mathrm{in}\quad\mathrm{BMO}(\mathbb{R}^{N}, d\omega),
\end{equation*}
with
\begin{equation*}
\|g\|_{L^{\infty}(\mathbb{R}^{N}, d\omega)}+\|\mu\|_{\mathcal{C}}\lesssim\|f\|_{\ast,\kappa}.
\end{equation*}
From this, and the subordination formula
\begin{equation*}
  p_{t}(x,y)=\frac{t}{\sqrt{\pi}}\int_{0}^{\infty}\frac{e^{-\frac{t^{2}}{4s^{2}}}}{s^{2}}h_{s^{2}}(x,y)ds,\qquad x,y\in{\mathbb{R}^{N}},t>0;
\end{equation*}
see \cite[(23)]{JiuLi2023}, we rewrite
\begin{align*}
  \widetilde{S}_{\mu,\mathcal{P}}
  &=\iint_{\mathbb{R}^{N+1}_{+}}p_{t}(x,y)-p_{t}(0,y)\cdot \mathbf{1}_{\{t>1\}}(y,t)d\mu(y,t)\\
  &=\iint_{\mathbb{R}^{N+1}_{+}}\left[h_{s^{2}}(x,y)-h_{s^{2}}(0,y)\cdot \mathbf{1}_{\{s>1\}}(y,s)\right]\left(\frac{1}{\sqrt{\pi}}\int_{0<t<\infty}t\frac{e^{-\frac{t^{2}}{4s^{2}}}}{s^{2}}d\mu(y,t)ds\right)\\
  &\quad+\int_{0}^{\infty}\int_{\mathbb{R}^{N}}h_{s^{2}}(0,y)\cdot \mathbf{1}_{\{s>1\}}(y,s)\frac{1}{\sqrt{\pi}}\int_{0<t<\infty}t\frac{e^{-\frac{t^{2}}{4s^{2}}}}{s^{2}}\mathbf{1}_{\{t\leq1\}}(y,t)d\mu(y,t)ds\\
  &\quad\quad-\int_{0}^{\infty}\int_{\mathbb{R}^{N}}h_{s^{2}}(0,y)\cdot \mathbf{1}_{\{s\leq1\}}(y,s)\frac{1}{\sqrt{\pi}}\int_{0<t<\infty}t\frac{e^{-\frac{t^{2}}{4s^{2}}}}{s^{2}}\mathbf{1}_{\{t>1\}}(y,t)d\mu(y,t)ds.
\end{align*}
It reduces to show that
\begin{equation*}
  d\nu(y,s):=\frac{1}{\sqrt{\pi}}\int_{0<t<\infty}t\frac{e^{-\frac{t^{2}}{4s^{2}}}}{s^{2}}d\mu(y,t)ds
\end{equation*}
belongs to $\Re(\mathbb{R}^{N+1}_{+})$, and 
\begin{equation*}
  C_{1}:=\frac{1}{\sqrt{\pi}}\int_{0}^{\infty}\int_{\mathbb{R}^{N}}h_{s^{2}}(0,y)\cdot \mathbf{1}_{\{s>1\}}(y,s)\int_{0<t<\infty}t\frac{e^{-\frac{t^{2}}{4s^{2}}}}{s^{2}}\mathbf{1}_{\{t\leq1\}}(y,t)d\mu(y,t)ds
\end{equation*}
and 
\begin{equation*}
  C_{2}:=\frac{1}{\sqrt{\pi}}\int_{0}^{\infty}\int_{\mathbb{R}^{N}}h_{s^{2}}(0,y)\cdot \mathbf{1}_{\{s\leq1\}}(y,s)\int_{0<t<\infty}t\frac{e^{-\frac{t^{2}}{4s^{2}}}}{s^{2}}\mathbf{1}_{\{t>1\}}(y,t)d\mu(y,t)ds
\end{equation*}
are bounded constants controlled by $\|\mu\|_{\mathcal{C}}$.

Indeed, for each ball $B:=B(x_{0},r)\subset\mathbb{R}^{N}$,
\begin{align*}
  &\int_{0}^{r}\int_{B}\int_{0<t<\infty}t\frac{e^{-\frac{t^{2}}{4s^{2}}}}{s^{2}}d|\mu|(y,t)ds\\
  &\quad=\int_{0}^{r}\int_{B}\int_{0<t<r}t\frac{e^{-\frac{t^{2}}{4s^{2}}}}{s^{2}}d|\mu|(y,t)ds+\sum_{k=1}^{\infty}\int_{0}^{r}\int_{B}\int_{2^{k-1}r\leq t<2^{k}r}t\frac{e^{-\frac{t^{2}}{4s^{2}}}}{s^{2}}d|\mu|(y,t)ds\\
  &\quad\leq|\mu|(\widehat{B})\int_{0}^{\infty}\frac{e^{-\frac{1}{4s^{2}}}}{s^{2}}ds+\sum_{k=1}^{\infty}|\mu|(\widehat{2^{k}B})\int_{0}^{2^{1-k}}\frac{e^{-\frac{1}{4s^{2}}}}{s^{2}}ds\\
  &\quad\lesssim|\mu|(\widehat{B})+\sum_{k=1}^{\infty}2^{-k(\mathbf{N}+1)}|\mu|(\widehat{2^{k}B})\\
  &\quad\lesssim\|\mu\|_{\mathcal{C}}\omega(B),
\end{align*}
where in the penultimate inequality, we use the following estimate 
\begin{equation*}
\int_{0}^{2^{1-k}}\frac{e^{-\frac{1}{4s^{2}}}}{s^{2}}ds\lesssim_{\mathbf{N}}2^{-k(\mathbf{N}+1)}.
\end{equation*}
It shows $\nu\in{\Re(\mathbb{R}^{N+1}_{+})}$ with $\|\nu\|_{\mathcal{C}}\lesssim\|f\|_{\ast,\kappa}$. 

Next, consider $C_{1}$ and $C_{2}$. According to \cite[Proposition 5.1]{Margit1999}, when $x=0$, we have
\begin{equation*}
\int_{\mathbb{R}^{N}}e^{\langle\xi, \frac{y}{2t}\rangle}d\mu_{x=0}(\xi)=1,
\end{equation*}
for every $y\in\mathbb{R}^{N}$ and $t>0$. Thus
\begin{equation*}
h_{s^{2}}(0,y)= \frac{(\sqrt{2}s)^{-\mathbf{N}}}{c_{\kappa}}e^{-\frac{\|y\|^{2}}{4s^{2}}}.
\end{equation*}
Then
\begin{align*}
  |C_{1}|
  &\lesssim\int_{1<s<\infty}\iint_{\mathbb{R}^{N}\times(0,1]}e^{-\frac{\|y\|^{2}}{4s^{2}}}\frac{1}{s^{\mathbf{N}+2}}d|\mu|(y,t)ds\\
  &\lesssim\iint_{\mathbb{R}^{N}\times(0,1]}\frac{1}{(1+\|y\|)^{\mathbf{N}+1}}d|\mu|(y,t)\\
  &\lesssim\left(\iint_{B(0,2)\times(0,1]}+\sum_{k=1}^{\infty}\iint_{[B(0,2^{k+1})\backslash B(0,2^{k})]\times(0,1]}\right)\frac{1}{(1+\|y\|)^{\mathbf{N}+1}}d|\mu|(y,t)\\
  &\lesssim\|\mu\|_{\mathcal{C}},
\end{align*}
where in the second inequality, we use the fact that for any $y\in{\mathbb{R}^{N}}$,
\begin{equation*}
\int_{1}^{\infty}e^{-\frac{\|y\|^{2}}{4s^{2}}}\frac{1}{s^{\mathbf{N}+2}}ds\lesssim_{\mathbf{N}}\frac{1}{(1+\|y\|)^{\mathbf{N}+1}}.
\end{equation*}

Using the estimate that for any $y\in{\mathbb{R}^{N}}$ and $t>0$ with $1\leq t^{2}+\|y\|^{2}$,
\begin{equation*}
  \int_{0}^{1}\frac{te^{-\frac{\|y\|^{2}+t^{2}}{4s^{2}}}}{s^{\mathbf{N}+2}}ds\lesssim_{\mathbf{N}}e^{-(\|y\|^{2}+t^{2})/4},
\end{equation*}
and the inclusion 
\begin{equation*}
  \left\{(y,t)\in\mathbb{R}^{N+1}_{+}:\,\,2^{k}\leq\sqrt{\|y\|^{2}+t^{2}}<2^{k+1}\right\}\subset B(0,2^{k+1})\times(0,2^{k+1}),
\end{equation*}
for every $k\in{\{0,1,2,\ldots\}}$, we obtain
\begin{align*}
  |C_{2}|
  &\lesssim \int_{1<t<\infty}\int_{\mathbb{R}^{N}}\int_{0}^{1}\frac{te^{-\frac{\|y\|^{2}+t^{2}}{4s^{2}}}}{s^{\mathbf{N}+2}}dsd|\mu|(y,t)\\
  &\lesssim\int_{1}^{\infty}\int_{\mathbb{R}^{N}}e^{-(\|y\|^{2}+t^{2})/4}d|\mu|(y,t)\\
  &\lesssim\sum_{k=0}^{\infty}\iint_{2^{k}\leq\sqrt{\|y\|^{2}+t^{2}}<2^{k+1}}e^{-(\|y\|^{2}+t^{2})/4}d|\mu|(y,t)\\
  &\lesssim\|\mu\|_{\mathcal{C}}\sum_{k=0}^{\infty}2^{\mathbf{N}k}e^{-4^{k-1}}\\
  &\lesssim\|\mu\|_{\mathcal{C}}.
\end{align*}
Let $b:=g+C_{1}-C_{2}$. Then we have $f=b+\widetilde{S}_{\nu,\mathcal{H}}$, in $\mathrm{BMO}(\mathbb{R}^{N}, d\omega)$.

On the other hand, the heat semigroup $\{\mathcal{H}_{t}\}_{t>0}$ has the following properties:

$\rm(i)$ For all $x,y\in\R^N$ and $t>0$,
\begin{equation*} 
h_t(x,y)=h_t(y,x)>0, \quad \text{and} \quad\int_{\mathbb{R}^{N}}h_{t}(x,y)d\omega(y)=1.
\end{equation*}

$\rm(ii)$ There are constants $C_{10},C_{11}>0$ such that
\begin{equation*} 
\left\|\nabla_{x}h_{t}(x,y)\right\|\leq C_{10}\frac{1}{\sqrt{t}}\left(1+\frac{\|x-y\|}{\sqrt{t}}\right)^{-2}\frac{1}{V(x,y, \sqrt{t})}e^{-C_{11}d(x,y)^{2}/t},
\end{equation*}
for every $x,y\in\R^N$ and $t>0$; see \cite{JacekHejna2020}.

Using the above properties of $\{\mathcal{H}_{t}\}_{t>0}$ and arguing as in the proof of Theorem \ref{thm:balayage-measure} $\rm(i)$, we also conclude that $\widetilde{S}_{\nu,\mathcal{H}}\in{\mathrm{BMO}(\mathbb{R}^{N}, d\omega)}$ for every $\nu\in{\Re(\mathbb{R}^{N+1}_{+})}$. Therefore, the proof of this theorem is complete.
\end{proof}

\begin{remark}\label{rem-Hardy-heat}
By Theorem \ref{thm:balayage-measure-heat} and an argument similar to that used in the proof of Theorem \ref{thm-Hardy-balayage}, the conclusion of Theorem \ref{thm-Hardy-balayage} remains valid when $\widetilde{S}_{\mu,\mathcal{P}}$ is replaced by $\widetilde{S}_{\nu,\mathcal{H}}$.
\end{remark}

\bigskip
\noindent\textbf{Acknowledgements}\  \  
The author would like to thank Professor Ji Li for valuable suggestions. 
This work is supported by the NNSF of China [12601176], the NSFYSIP of Fujian Provincial [2026J00831 4] and the start-up fund of XMUT [YKJ25060R]. 

\vspace{0.3cm}
\noindent\textbf{Declarations}

\vspace{0.3cm}
\noindent\textbf{Data availability}\  \  
No datasets were generated or analyzed during the current study. 

\vspace{0.3cm}
\noindent\textbf{Conflict of Interest}\ \
No conflict of interest was reported by the author.


\end{document}